\documentclass[11pt]{amsart}

\usepackage{amsmath}
\usepackage{amssymb}

\allowdisplaybreaks

\newtheorem{theorem}{Theorem}[section]
\newtheorem{lemma}[theorem]{Lemma}
\newtheorem{proposition}[theorem]{Proposition}
\newtheorem{corollary}[theorem]{Corollary}

\theoremstyle{definition}
\newtheorem{definition}[theorem]{Definition}
\newtheorem{example}[theorem]{Example}
\newtheorem{question}[theorem]{Question}

\theoremstyle{remark}
\newtheorem{remark}[theorem]{Remark}

\newtheorem*{thmA}{Theorem A}
\newtheorem*{thmB}{Theorem B}
\newtheorem*{thmC}{Theorem C}

\newcommand{\D}{\mathrm{D}}                      
\newcommand{\Db}{\mathcal{D}}                    
\newcommand{\per}{\operatorname{per}}
\newcommand{\rmod}{\operatorname{mod}}
\newcommand{\HH}{\mathit{HH}}
\newcommand{\rad}{\operatorname{rad}}
\newcommand{\gldim}{\operatorname{gl.dim}}
\newcommand{\Hom}{\operatorname{Hom}}

\newcommand{\RHom}{\mathbf{R}\!\operatorname{Hom}}
\newcommand{\tr}{\operatorname{tr}}
\newcommand{\ch}{\operatorname{ch}}
\newcommand{\id}{\operatorname{id}}
\newcommand{\lten}{\otimes^{\mathbb{L}}}
\newcommand{\lta}{\otimes^{\mathbb{L}}_{A}}
\newcommand{\ltb}{\otimes^{\mathbb{L}}_{B}}
\newcommand{\kk}{k}
\newcommand{\ZZ}{\mathbb{Z}}
\newcommand{\TT}{\mathbf{TT\text{-}calc}}
\newcommand{\Ak}{\mathbf{A}_{\kk}}
\newcommand{\HB}{\mathbb{H}}
\newcommand{\dimv}{\underline{\dim}\,}
\newcommand{\Kn}{K_0}
\newcommand{\Cox}{\Phi}
\newcommand{\coxpol}{\phi}
\newcommand{\serre}{\omega}
\newcommand{\ar}{\uptau}
\newcommand{\cA}{\mathcal{A}}
\newcommand{\cB}{\mathcal{B}}
\newcommand{\Dcat}{\mathcal{D}}
\newcommand{\PP}{\mathbb{P}}
\newcommand{\QQ}{\mathbb{Q}}
\DeclareMathOperator{\Aut}{Aut}
\newcommand{\uptau}{\tau}

\begin{document}

\title[The Coxeter transformation on the Tamarkin--Tsygan calculus]
{The Coxeter transformation as an automorphism of the Tamarkin--Tsygan calculus}

\author{Marco Armenta}
\address{}
\email{marco.armenta@usherbrooke.ca}


\subjclass[2020]{Primary 16E40; Secondary 16E35, 16G20, 16G70}
\keywords{Hochschild cohomology, Tamarkin--Tsygan calculus, derived
equivalence, Coxeter transformation, Coxeter polynomial, Serre functor,
Auslander--Reiten translation}

\begin{abstract}
Let $A$ be a finite-dimensional algebra over a field $\kk$. We show that
the Auslander--Reiten bimodule $\ar_A:=\D A[-1]$ is central in the derived
Picard group of $A$ and that, when $\gldim A<\infty$, it induces through
the derived-invariance functor $\HB$ of
\cite{Armenta19,ArmentaKeller17,ArmentaKeller19} a canonical automorphism
$\sigma_A$ of the Tamarkin--Tsygan calculus of $A$; the pair
$(\HB(A),\sigma_A)$ is invariant under derived equivalence. We then compute
both components of $\sigma_A$. On the Hochschild homology of an elementary
algebra, which is concentrated in degree zero, the matrix of $\sigma_A$ in
the basis of idempotent traces is $-C_A^{-1}C_A^{\mathrm{T}}$, so its
characteristic polynomial is the Coxeter polynomial; the enriched calculus
strictly refines both the calculus and the Coxeter polynomial, as the path
algebras of quivers of types $\mathbb{A}_4$ and $\mathbb{D}_4$ show,
although it is not a complete derived invariant, as the smallest
cospectral pair of trees shows. On
Hochschild cohomology we prove that $\sigma_A$ is the identity: the left
and right actions of $\HH^\bullet(A)$ on the bimodule $\D A$ coincide for
every finite-dimensional $A$. This yields a short conceptual proof that
the Nakayama automorphism of a Frobenius algebra acts trivially on
Hochschild cohomology, recovering a recent theorem of Su\'arez-\'Alvarez.
Finally we extend the construction to smooth and proper differential
graded algebras, hence to perfect derived categories of smooth projective
varieties; the enrichment degenerates precisely on Calabi--Yau categories,
and on $\PP^n$ it is governed by the Coxeter polynomial
$(x+(-1)^n)^{n+1}$ of the Beilinson algebra. Happel's trace formula and
de la Pe\~na's cyclotomicity theorem for fractionally Calabi--Yau algebras
become statements internal to the enriched calculus.
\end{abstract}

\maketitle

\section{Introduction}\label{sec:intro}

Two of the oldest families of derived invariants of a finite-dimensional
algebra $A$ of finite global dimension live at opposite ends of the
homological spectrum. At the decategorified end sits the \emph{Coxeter
polynomial} $\coxpol_A$, the characteristic polynomial of the Coxeter
transformation $\Cox_A$, that is, of the automorphism of the Grothendieck
group $\Kn(\per A)$ induced by the derived Auslander--Reiten translation
$\ar=\nu\circ[-1]$ of Happel \cite{Happel88,Happel91}; its derived invariance
is classical, see \cite{delaPena14,LenzingdelaPena08}. At the other end sits
the full Hochschild theory of $A$: Rickard proved that derived equivalences
preserve the Hochschild cohomology algebra \cite{Rickard89a}, Keller that
they preserve its Gerstenhaber bracket \cite{Keller04} and the cyclic theory
\cite{Keller98}, and in \cite{ArmentaKeller17,ArmentaKeller19,Armenta19} it
was proved that the whole \emph{Tamarkin--Tsygan calculus}
\[
  \HB(A) \;=\; \bigl(\HH_\bullet(A),\, \HH^\bullet(A),\, \cup,\, [-,-],\,
  \cap,\, B\bigr)
\]
of Hochschild homology and cohomology, with the cup product, the Gerstenhaber
bracket, the cap product and the Connes differential, is a derived invariant.
More precisely, \cite{Armenta19} constructs a functor
$\HB\colon \Ak\to\TT$, from a category of algebras whose morphisms are
suitable complexes of bimodules to the category of Tamarkin--Tsygan calculi,
which is constant on derived equivalence classes.

Neither invariant determines the other. The thesis \cite{Armenta19} ends
with the observation that the path algebras of the quivers
\[
  Q_A:\quad
  \begin{array}{ccccc}
   2 & & & & 3\\[-2pt]
     & \searrow & & \swarrow & \\[-2pt]
     & & 1 & & \\[-2pt]
     & & \uparrow & & \\[-2pt]
     & & 4 & &
  \end{array}
  \qquad\qquad
  Q_B:\quad 1\longrightarrow 2 \longrightarrow 3 \longrightarrow 4
\]
of Dynkin types $\mathbb{D}_4$ and $\mathbb{A}_4$ have isomorphic
Tamarkin--Tsygan calculi --- every operation degenerates --- while their
Coxeter polynomials
\[
  \coxpol_A(x)=(x+1)^2(x^2-x+1)=x^4+x^3+x+1,
  \qquad
  \coxpol_B(x)=x^4+x^3+x^2+x+1
\]
differ, so that $A$ and $B$ are not derived equivalent --- a fact which
is of course classical \cite{Happel88,Happel91}; see
Remark~\ref{rem:classical} --- and the Tamarkin--Tsygan calculus is not a
complete derived invariant. Conversely,
the Coxeter polynomial is notoriously far from complete
\cite{LenzingdelaPena08}. The purpose of this article is to show that the
two invariants are in fact two shadows of a single, finer one, obtained by
letting the categorical avatar of the Coxeter transformation act on the
calculus itself.

The mechanism is simple, and the point of this paper is precisely that the
machinery of \cite{Armenta19} makes it simple. The Coxeter transformation is
the Grothendieck-group shadow of the Auslander--Reiten translation
$\ar=\nu[-1]$, where $\nu=-\lta \D A$ is the Nakayama functor, a Serre
functor on $\per A$ \cite{BondalKapranov90,Happel88,Happel91}. We prove that
the underlying bimodule complex
\[
  \ar_A \;:=\; \D A[-1]
\]
is a two-sided tilting complex over $A$ whenever $\gldim A<\infty$
(Lemma~\ref{lem:invertible}) and that it commutes with every two-sided
tilting complex (Lemma~\ref{lem:commutation}): if $X$ is a two-sided tilting
complex of $A$-$B$-bimodules, then
\[
  \ar_A\lta X \;\cong\; X\ltb \ar_B
  \qquad\text{in the derived category of $A$-$B$-bimodules.}
\]
Since $\ar_A$ is in particular a morphism $A\to A$ of the category $\Ak$ of
\cite{Armenta19}, the functor $\HB$ produces from it an automorphism of the
calculus. We call
\[
  \sigma_A \;:=\; \HB(\ar_A)\;\in\;\Aut_{\TT}\bigl(\HB(A)\bigr)
\]
the \emph{Coxeter automorphism} of the Tamarkin--Tsygan calculus of $A$.
Functoriality then yields, with almost no further work:

\begin{thmA}[= Theorem \ref{thm:invariance}]
Let $\kk$ be a field and $A$ a finite-dimensional $\kk$-algebra with
$\gldim A<\infty$.
\begin{enumerate}
\item $\sigma_A$ is an automorphism of the Tamarkin--Tsygan calculus
$\HB(A)$.
\item The assignment $m\mapsto\sigma_A^m=\HB(\ar_A^{\lten_A m})$ is a group
homomorphism $\ZZ\to\Aut_{\TT}(\HB(A))$.
\item For every two-sided tilting complex $X$ of $A$-$B$-bimodules one has
$\HB(X)\circ\sigma_A=\sigma_B\circ\HB(X)$. Consequently the isomorphism
class of the pair $(\HB(A),\sigma_A)$ is a derived invariant of $A$.
\end{enumerate}
\end{thmA}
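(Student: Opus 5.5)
The plan is to derive all three parts formally from the functoriality of $\HB\colon\Ak\to\TT$ of \cite{Armenta19}, using two inputs announced in the introduction: Lemma~\ref{lem:invertible} ($\ar_A=\D A[-1]$ is a two-sided tilting complex when $\gldim A<\infty$) and Lemma~\ref{lem:commutation} ($\ar_A\lta X\cong X\ltb\ar_B$ for every two-sided tilting complex $X$). The properties of $\HB$ I rely on are these. Morphisms in $\Ak$ are suitable bimodule complexes, composed by derived tensor product. $\HB$ depends only on the isomorphism class of such a complex in the derived category of bimodules. And $\HB(A)=\HB(\text{identity bimodule }A)$ is the identity.

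For (1), Lemma~\ref{lem:invertible} supplies an inverse $\ar_A^{-1}=\RHom_A(\D A,A)[1]$ with $\ar_A\lta\ar_A^{-1}\cong A\cong\ar_A^{-1}\lta\ar_A$ as bimodule complexes. Both $\ar_A$ and its inverse are perfect on each side, so each is a morphism $A\to A$ in $\Ak$. Functoriality then gives $\HB(\ar_A)\circ\HB(\ar_A^{-1})=\HB(A)=\id$, and likewise in the other order. Hence $\sigma_A$ is an isomorphism in $\TT$, that is, an automorphism of the calculus.

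For (2), I set $\ar_A^{\lten_A m}$ to be the $m$-fold tensor power for $m>0$, the bimodule $A$ for $m=0$, and the power of $\ar_A^{-1}$ for $m<0$. Associativity of $\lten_A$ together with $\ar_A\lta\ar_A^{-1}\cong A$ gives $\ar_A^{\lten m}\lta\ar_A^{\lten n}\cong\ar_A^{\lten(m+n)}$ for all integers $m,n$. Applying $\HB$ turns this into $\sigma_A^m\sigma_A^n=\sigma_A^{m+n}$. One point needs care: I must check the composition convention in $\Ak$, namely whether $X\ltb Y$ corresponds to $\HB(Y)\circ\HB(X)$ or the reverse. Since all the morphisms here are powers of a single element, the homomorphism property holds under either convention.

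For (3), I apply $\HB$ to the isomorphism $\ar_A\lta X\cong X\ltb\ar_B$ of Lemma~\ref{lem:commutation}. This gives $\HB(X)\circ\sigma_A=\sigma_B\circ\HB(X)$, once the composition convention is fixed consistently. The main obstacle is making sure $\HB$ respects the isomorphism class of the composite, and in particular the identification of $\HB(X)$ as a map $\HB(A)\to\HB(B)$ in the correct direction. I would settle this by rereading the definitions in \cite{Armenta19} and, if needed, replacing $X$ by its inverse $X^{-1}$. Lemma~\ref{lem:commutation} applied to $X^{-1}$ yields the symmetric statement, so the equivariance holds in whichever direction $\HB$ is covariant. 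For the derived invariance, a derived equivalence provides a two-sided tilting complex $X$ by Rickard's theorem. Then $\HB(X)$ is an isomorphism of calculi (its inverse is $\HB(X^{-1})$) that intertwines $\sigma_A$ and $\sigma_B$, so $(\HB(A),\sigma_A)\cong(\HB(B),\sigma_B)$. Note also that $\gldim B<\infty$ because finite global dimension is a derived invariant, so $\sigma_B$ is defined.
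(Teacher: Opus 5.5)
Your proposal is correct and follows the paper's proof essentially step for step. Parts (1) and (2) come from the invertibility of $\ar_A$ (Lemma~\ref{lem:invertible}) together with functoriality of $\HB$. Part (3) comes from applying $\HB$ to the isomorphism of Lemma~\ref{lem:commutation}, which makes the two composites equal in $\Ak$, and then Rickard's theorem gives the derived invariance. The composition convention you were unsure about is fixed in \S\ref{subsec:calculus} as $Y\circ X=X\ltb Y$. With it, $\HB(X)\circ\sigma_A=\HB(\ar_A\lta X)$ and $\sigma_B\circ\HB(X)=\HB(X\ltb\ar_B)$ hold directly, so you do not need to pass to $X^{-1}$.
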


Spelled out, part (1) says that the homological and cohomological
components $(\sigma_\bullet,\sigma^\bullet)$ of $\sigma_A$ respect all
five operations of the calculus:
\begin{gather*}
  \sigma^\bullet(\eta\cup\theta)=\sigma^\bullet\eta\cup\sigma^\bullet\theta,
  \qquad
  \sigma^\bullet[\eta,\theta]=[\sigma^\bullet\eta,\sigma^\bullet\theta],\\
  \sigma_\bullet(z\cap\eta)=\sigma_\bullet z\cap\sigma^\bullet\eta,
  \qquad
  \sigma_\bullet B=B\sigma_\bullet .
\end{gather*}

The second main result computes the homological component of $\sigma_A$
for elementary algebras and identifies it with the Coxeter transformation.

\begin{thmB}[= Theorem \ref{thm:matrix}]
Let $A$ be a finite-dimensional elementary $\kk$-algebra with
$\gldim A<\infty$, let $e_1,\dots,e_n$ be a complete set of primitive
orthogonal idempotents, and let $\bar e_1,\dots,\bar e_n$ denote their
classes in $\HH_0(A)=A/[A,A]$. Then $\HH_i(A)=0$ for $i\geq 1$, the
classes $\bar e_1,\dots,\bar e_n$ form a basis of $\HH_0(A)$, and in this
basis the matrix of $\sigma_A$ on $\HH_0(A)$ is
\[
  -\,C_A^{-1}C_A^{\mathrm{T}},
\]
where $C_A$ is the Cartan matrix of $A$.
\end{thmB}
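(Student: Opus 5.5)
Three steps: vanishing of higher homology with the basis, identification of $\sigma_A$ with a Hattori--Stallings character, and the matrix computation.

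\emph{Homology and basis.} Since $A$ is elementary of finite global dimension, $A/\rad A\cong\kk^n$ and the quiver of $A$ has no oriented cycles. Hence every element of $A$ is a combination of the $e_i$ and of paths $e_jae_i$ with $i\neq j$. Each such path is a commutator, $[e_j,e_jae_i]$, so it dies in $A/[A,A]$, and $\HH_0(A)$ is spanned by the $\bar e_i$. These classes are independent because the functionals $a\mapsto$ (coefficient of $e_i$ modulo $\rad A$) vanish on $[A,A]$.

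For the vanishing of $\HH_i(A)$, $i\geq1$, I would use invariance of Hochschild homology under the radical filtration. For a finite-dimensional elementary algebra of finite global dimension, $\HH_\bullet(A)\cong\HH_\bullet(A/\rad A)=\kk^n$ in degree $0$; this is the Keller/Cibils argument via the semisimple subalgebra $E=\kk^n$ and the $E$-relative bar resolution, which is nilpotent off the diagonal. Alternatively one can cite Happel's result directly. Either way $\HH_i(A)=0$ for $i\geq1$.

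\emph{Action of $\sigma_A$.} By construction in \cite{Armenta19}, for a two-sided tilting complex $X$ the map $\HB(X)$ on $\HH_0$ is the trace map $\tr_X$. This map is compatible with the Dennis/Hattori--Stallings trace $\Kn(\per A)\to\HH_0(A)$: it sends $[P]$ to the Hattori--Stallings trace of $P\lta X$. So the square
\[
\begin{array}{ccc}
\Kn(\per A) & \xrightarrow{\;-\lta\ar_A\;} & \Kn(\per A)\\[-2pt]
\downarrow & & \downarrow\\[-2pt]
\HH_0(A) & \xrightarrow{\;\sigma_A\;} & \HH_0(A)
\end{array}
\]
commutes, with vertical maps $[P]\mapsto\ch(P)$. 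I expect this step to be the main obstacle: one must check that the homological component of $\HB$ in \cite{Armenta19} really is the Hattori--Stallings character and not its transpose or dual. I would verify it on $X$ a projective bimodule $Ae\otimes fA$, then extend by additivity and shifts through a bounded projective bimodule resolution of $X$.

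\emph{Matrix computation.} For the projective $P_i=e_iA$ one has $\ch(P_i)=\bar e_i$, so the vertical maps are isomorphisms after $\otimes\kk$, and $\sigma_A$ is the Coxeter transformation written in the basis $[P_i]$. Now $P_i\lta\D A=\nu P_i=I_i$, so $\ar_A$ sends $[P_i]\mapsto-[I_i]$. Expressing $[I_i]$ in the basis of projectives uses dimension vectors: $\dimv P_i$ is a column of $C_A$ and $\dimv I_i$ is a row of $C_A$, i.e.\ a column of $C_A^{\mathrm{T}}$. Hence $[I_i]=\sum_j (C_A^{-1}C_A^{\mathrm{T}})_{ji}[P_j]$, and the matrix of $\sigma_A$ is $-C_A^{-1}C_A^{\mathrm{T}}$.

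Finally I would fix the convention $(C_A)_{ij}=\dim e_iAe_j$ or its transpose so that this reading is consistent. Changing the convention only conjugates the matrix, so the characteristic polynomial is unaffected either way.
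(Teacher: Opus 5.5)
Your Steps~2 and~3 are essentially the paper's argument: Proposition~\ref{prop:naturality}(2), followed by the dimension-vector computation of Lemma~\ref{lem:coxmatrix}. Checking the trace identification on projective bimodules $Ae\otimes_\kk fA$, instead of using the right-projective tensor-trace formula, is a harmless variant.

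\textbf{The gap is in Step~1.} It rests on a false claim: finite global dimension does \emph{not} force the quiver of an elementary algebra to have no oriented cycles. Take arrows $\alpha\colon 1\to2$ and $\beta\colon 2\to1$, with the single relation killing the path $1\to2\to1$.
\begin{itemize}
\item One indecomposable projective is uniserial of length three, and its radical is isomorphic to the other projective.
\item That other projective is uniserial of length two, with simple radical.
\item Hence one simple has projective dimension $1$ and the other has projective dimension $2$, so $\gldim A=2$.
\end{itemize}
Yet the nonzero path $2\to1\to2$ lies in $e_2(\rad A)e_2$.

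This breaks both halves of your Step~1.
\begin{itemize}
\item Your spanning argument only shows $e_j(\rad A)e_i\subseteq[A,A]$ for $i\neq j$. It says nothing about the diagonal corners $e_i(\rad A)e_i$.
\item Without finite global dimension, elements of these corners genuinely survive. For the same quiver with all paths of length three set to zero, a self-injective algebra of infinite global dimension, the class of either $2$-cycle is nonzero in $\HH_0$.
\item Your ``$\kk^n$-relative bar complex, nilpotent off the diagonal'' argument for the higher vanishing works only for quivers without oriented cycles. In positive degrees the relative chains are supported on oriented cycles, so it fails here too.
\end{itemize}
In your write-up, $\gldim A<\infty$ enters only through the false combinatorial claim. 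It has to enter through a genuinely homological argument.

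\textbf{The missing ingredient is Lenzing's theorem} \cite{Lenzing69}. When $\gldim A<\infty$, nilpotent elements have zero Hattori--Stallings trace, so $\rad A\subseteq[A,A]$. Combine this with $[A,A]\subseteq\rad A$, which is what your independence argument actually shows. You get $\HH_0(A)=A/\rad A$, with basis $\bar e_1,\dots,\bar e_n$.

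For $\HH_i(A)=0$, $i\ge1$, cite Keller's theorem \cite[Prop.~2.5]{Keller98} in its general form, valid for all elementary algebras of finite global dimension, rather than the acyclic-quiver argument you sketch. This is exactly how the paper proceeds in Proposition~\ref{prop:concentration}. Citing $\HH_\bullet(A)\cong\HH_\bullet(A/\rad A)$ in that generality would also give the degree-zero claim, by a dimension count against your independence argument.

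With Step~1 repaired, the rest of your proposal goes through. One small point on conventions: no choice is needed at the end. Your column/row reading of $\dimv P_i$ and $\dimv I_i$ is exactly the paper's convention $c_{ij}=\dim_\kk e_jAe_i$. The transposed convention would produce $\bigl(-C_A^{-1}C_A^{\mathrm{T}}\bigr)^{-1}$, which has the same characteristic polynomial but is not the asserted matrix.
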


Two consequences are drawn in the text
(Proposition~\ref{prop:naturality}, Theorem~\ref{thm:matrix}). First,
the Euler class $\ch\colon\Kn(\per A)\to\HH_0(A)$, which sends $[e_iA]$
to $\bar e_i$ and becomes an isomorphism after extension of scalars to
$\kk$, intertwines the Coxeter transformation $\Cox_A$ with $\sigma_A$.
Second,
the characteristic polynomial of $\sigma_A$ on Hochschild homology is the
Coxeter polynomial of $A$, read in $\kk[x]$:
\[
  \det\bigl(x\cdot\id-\sigma_A|_{\HH_\bullet(A)}\bigr)
  \;=\;
  \coxpol_A(x).
\]

Thus the enriched calculus $(\HB(A),\sigma_A)$ determines the
Tamarkin--Tsygan calculus (forget $\sigma$) and the Coxeter polynomial (take
the characteristic polynomial of $\sigma$ on homology, on the nose if
$\operatorname{char}\kk=0$, modulo $p$ in characteristic $p$).

The third main result identifies the cohomological component completely;
we regard it as the conceptual heart of the paper.

\begin{thmC}[= Theorem \ref{thm:identity}, Corollary \ref{cor:nakayama}]
Let $A$ be any finite-dimensional $\kk$-algebra. The left and right
actions of $\HH^\bullet(A)$ on the bimodule $\D A$ coincide.
Consequently:
\begin{enumerate}
\item if $\gldim A<\infty$, then $\sigma^\bullet_A=\id_{\HH^\bullet(A)}$,
so the enriched calculus amounts to the Tamarkin--Tsygan calculus together
with the single operator $\sigma_\bullet$ on Hochschild homology, which
commutes with the Connes differential and with every cap operator;
\item if $A$ is Frobenius, its Nakayama automorphism acts trivially on
$\HH^\bullet(A)$.
\end{enumerate}
\end{thmC}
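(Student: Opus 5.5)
We have to show that for any bimodule $M$, and in particular for $M=\D A$, the two actions of $\HH^\bullet(A)=\mathrm{Ext}^\bullet_{A^e}(A,A)$ on $\mathrm{Ext}^\bullet_{A^e}(M,M)$ agree. Here "left" means $\eta\mapsto \eta\lta M$ and "right" means $\eta\mapsto M\lta\eta$. The plan is to use the duality $\D=\Hom_\kk(-,\kk)$, which is an anti-equivalence of bimodule derived categories, to reduce everything to the diagonal bimodule $A$.

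\textbf{Step 1: transport through the duality.} View $\eta\in\HH^n(A)$ as a morphism $\eta\colon A\to A[n]$ in $\Dcat(A^e)$. There are natural isomorphisms of bimodules
\[
  \D(A\lta N)\cong \RHom_A(N,\D A)
  \qquad\text{and}\qquad
  \RHom_{A^{op}}(N,\D A)\cong \D N .
\]
Combining them with $\D A\cong \D(A\lta A)$, I expect the following identifications. The left action $\eta\lta \D A$ is identified with $\D(\ldots)$ applied to the \emph{right} action of $\eta$ on $A$, and the right action $\D A\lta\eta$ is identified with $\D$ applied to the left action $\eta\lta A$. More precisely, I would show
\[
  \eta\lta \D A \;=\; \D\bigl(A\lta \eta\bigr)[\pm n]
\]
up to a sign $(-1)^{?}$, and symmetrically for the other side. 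This only requires naturality of the tensor-hom adjunction in the first variable, computed on a bimodule projective resolution $P\to A$.

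\textbf{Step 2: the diagonal case.} For the diagonal bimodule $A$, the two actions $\eta\lta A$ and $A\lta\eta$ both equal $\eta$ under the unit isomorphisms $A\lta A\cong A$. This is the standard fact that $\HH^\bullet(A)$ is graded commutative and acts centrally, via the Eckmann--Hilton argument: $\eta\lta A$ and $A\lta \eta$ coincide as elements of $\mathrm{Ext}_{A^e}(A,A)$, because both are computed from $P\otimes_A P\to P$. The identification of $\eta\lta A$ and $A\lta\eta$ with $\eta$ is a routine check.

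\textbf{Step 3: combine.} Steps 1 and 2 together give that the left and right actions of $\eta$ on $\D A$ agree, with signs consistent because both sides carry the same Koszul sign $(-1)^{n}$ from dualizing a degree $n$ map.

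I expect the main obstacle to be Step 1: making the identification precise at the level of morphisms rather than objects, and keeping the Koszul signs under control. I would first do it with explicit resolutions. Take $\eta$ represented by a cocycle $P_n\to A$, and dualize using $\D(P\otimes_A A)=\Hom_A(P,\D A)$. Then I would check that the swap of sides is forced because $\D$ exchanges the left and right $A$-structures.

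Consequences of Step 3:
\begin{enumerate}
\item When $\gldim A<\infty$, the map $\sigma^\bullet$ is conjugation by the invertible complex $\ar_A$, that is, $\eta\mapsto$ the unique $\eta'$ with $\eta\lta\ar_A=\ar_A\lta\eta'$. Equality of the two actions gives $\eta'=\eta$. The claims about $\sigma_\bullet$ commuting with $B$ and with caps are then Theorem A(1) with $\sigma^\bullet=\id$.
\item For a Frobenius algebra, $\D A\cong {}_1A_\nu$, and the right action on ${}_1A_\nu$ is the left action twisted by $\nu$. Hence $\nu^*\eta=\eta$.
\end{enumerate}
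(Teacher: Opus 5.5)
Your proposal is correct and follows the paper's proof of Lemma~\ref{lem:twoactions} essentially step for step. The paper evaluates the naturality of the evaluation isomorphisms $\alpha_X\colon X\lta\D A\to\D\RHom_A(X,A)$ and $\beta_X\colon\D A\lta X\to\D\RHom_{A^{\mathrm{op}}}(X,A)$ at $\eta\colon A\to A[m]$. This identifies $\ell^{\D A}_\eta$ and $r^{\D A}_\eta$ with the duals of $r^A_\eta$ and $\ell^A_\eta$, both of which equal $\eta$, and the paper then draws the two consequences via $\sigma^\bullet_A=\gamma_{\D A}$ and $\D A\cong{}_1A_\nu$, exactly as you do. One wording fix: your opening sentence should say the two actions agree for $M=\D A$, not for every bimodule $M$; they differ in general, for instance on twisted bimodules ${}_1A_\phi$.
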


Part (2) was proved very recently by Su\'arez-\'Alvarez
\cite{SuarezAlvarez25} by an explicit cochain-level analysis; here it
falls out of the same duality argument as part (1), the common source
being the centrality of the class of $\D A$ in the derived Picard group
of any finite-dimensional algebra (Lemma~\ref{lem:commutation} with
$B=A$). Theorem~C converts the two halves of $\sigma_A$ into a sharp
dichotomy: Hochschild cohomology, the home of deformation theory, is
blind to the Coxeter symmetry, while Hochschild homology, the home of
traces, detects it completely. The example that motivated this paper
shows that the resulting homological refinement is strict
(Example~\ref{cor:example}): for the path algebras $A$ and $B$ of the
quivers of types $\mathbb{D}_4$ and $\mathbb{A}_4$ above, over an
algebraically closed field, the Tamarkin--Tsygan calculi are isomorphic,
but there exists no isomorphism $\HB(A)\to\HB(B)$ commuting with
$\sigma_A$ and $\sigma_B$. Hence $(\HB(-),\sigma)$ is a strictly finer
derived invariant than the Tamarkin--Tsygan calculus, and also strictly
finer than the Coxeter polynomial (Remark~\ref{rem:strict}). It is not,
however, a complete derived invariant: the two smallest cospectral
trees, on eight vertices, have conjugate Coxeter matrices and hence
isomorphic enriched calculi, without being derived equivalent
(Example~\ref{ex:notcomplete}).

Finally, two classical theorems of Coxeter theory become statements
\emph{internal} to the enriched calculus. Happel's trace formula
\cite{Happel97} reads
\[
  \sum_{i\geq 0}(-1)^i\dim_\kk \HH^i(A)
  \;=\;
  -\tr\bigl(\sigma_A|_{\HH_\bullet(A)}\bigr):
\]
the Euler characteristic of the cohomological half of the calculus equals
minus the trace of the Coxeter automorphism on the homological half
(Corollary~\ref{cor:happel}); the operator $\sigma_A$ is in this precise
sense the promotion of Happel's numerical invariant to a derived-invariant
operator, and over a field of characteristic zero the full Coxeter
polynomial is recovered from the supertraces of the powers of $\sigma_A$ by
Newton's identities. And if $A$ is fractionally Calabi--Yau, in the bimodule
sense that $(\D A)^{\lten_A q}\cong A[p]$ in $\Db(A^e)$
\cite{HerschendIyama11}, then $\sigma_A^q=(-1)^{p-q}\id$ on
$\HH_\bullet(A)$, whence $\Cox_A$ is periodic and $\coxpol_A$ is a product
of cyclotomic polynomials (Corollary~\ref{cor:fcy}), recovering part of
\cite[Thm.~A]{delaPena14} at the level of the calculus.

Section~\ref{sec:identity} proves Theorem~C.
Section~\ref{sec:dg}, written following a suggestion of B.~Keller, carries
the construction to smooth and proper dg algebras, hence to perfect
derived categories of smooth projective varieties, where the enrichment
degenerates exactly on Calabi--Yau categories and where, on $\PP^n$, it is
governed by the Coxeter polynomial $(x+(-1)^n)^{n+1}$ of the Beilinson
algebra. Section~\ref{sec:discussion} relates $\sigma_A$ to recent work in
several directions: the Lefschetz- and Hirzebruch--Riemann--Roch-type
formulas of Han \cite{Han20}, which our Theorem~B categorifies in degree
zero; the categorical entropy of Serre functors
\cite{DHKK14,Han22,ChangSchroll25,KikutaOuchi} and de la Pe\~na's Mahler
measure programme \cite{delaPena15}; the periodicity dictionary between
fractional Calabi--Yau properties and cyclotomic Coxeter polynomials
\cite{Ladkani08,HerschendIyama11,CDIM25,Pfeifer25,Rognerud21}; and the
Calabi--Yau completions of Keller \cite{Keller11}, of which the operator
$\sigma_A$ is, informally, the first Fourier mode. We close with open
questions, of which we single out the $\ar$-twisted cohomological theory
of Question~\ref{q:twisted} as the natural continuation of Theorem~C.

\subsection*{Acknowledgements}
The author thanks Claude Cibils, Jos\'e Antonio de la Pe\~na and Bernhard
Keller for the conversations that shaped his view of these two subjects,
and is especially grateful to Bernhard Keller for his comments on a first
version of this paper and for suggesting the extension to smooth and
proper dg categories carried out in Section~\ref{sec:dg}. He also thanks
Ibrahim Assem for his careful reading of a first version and for remarks
that improved the exposition, in particular of the introduction and of
the motivating example.


\section{Conventions and recollections}\label{sec:conventions}

Throughout, $\kk$ is a field, $\D=\Hom_\kk(-,\kk)$ is the $\kk$-duality, and
$A$, $B$ denote finite-dimensional $\kk$-algebras. Modules are right modules
unless stated otherwise; $\rmod A$ is the category of finite-dimensional
right $A$-modules. An $A$-$B$-bimodule is a left $A$-, right $B$-module on
which $\kk$ acts centrally. We write $A^e=A\otimes_\kk A^{\mathrm{op}}$, so
that $A$-$A$-bimodules are left $A^e$-modules. We denote by $\Db(A)$ the
unbounded derived category of right $A$-modules, by $\Db^b(\rmod A)$ the
bounded derived category, and by $\per A\subseteq\Db(A)$ the perfect derived
category, that is, the thick subcategory generated by $A_A$. If
$\gldim A<\infty$, the canonical functor $\per A\to\Db^b(\rmod A)$ is an
equivalence. Since $\kk$ is a field, all derived functors between derived
categories of bimodules below are computed by means of resolutions that are
homotopically projective over the enveloping algebra, hence over each side
separately; see \cite{Keller98} for this standard framework.

\subsection{Cartan and Coxeter data}\label{subsec:coxeter}

Assume in this subsection that $A$ is \emph{elementary}, that is,
$A/\rad A\cong \kk^n$ as $\kk$-algebras; equivalently $A\cong \kk Q/I$ for a
finite quiver $Q$ and an admissible ideal $I$. Fix a complete set
$e_1,\dots,e_n$ of primitive orthogonal idempotents, and set
$P_i=e_iA$ and $I_i=\D(Ae_i)$, complete lists of the indecomposable
projective and injective right $A$-modules.

The \emph{Cartan matrix} of $A$ is $C_A=(c_{ij})\in\ZZ^{n\times n}$ with
\[
  c_{ij}\;=\;\dim_\kk\Hom_A(P_i,P_j)\;=\;\dim_\kk e_jAe_i .
\]
The \emph{dimension vector} of $M\in\rmod A$ is the column vector
$\dimv M\in\ZZ^n$ with $(\dimv M)_i=\dim_\kk Me_i=\dim_\kk\Hom_A(P_i,M)$; it
extends to $\per A$ by setting $\dimv M=\sum_j(-1)^j\dimv H^j(M)$.
Whenever $\gldim A<\infty$ it induces an isomorphism
$\dimv\colon \Kn(\per A)\to\ZZ^n$. We record the two basic computations:
\begin{equation}\label{eq:dimPI}
  \dimv P_j = C_A\,\varepsilon_j,
  \qquad
  \dimv I_j = C_A^{\mathrm{T}}\,\varepsilon_j ,
\end{equation}
where $\varepsilon_1,\dots,\varepsilon_n$ is the standard basis of
$\ZZ^n$. Indeed
\begin{gather*}
  (\dimv P_j)_i=\dim\Hom_A(P_i,P_j)=c_{ij},\\
  \Hom_A\bigl(e_iA,\D(Ae_j)\bigr)\cong\D(e_iA\otimes_AAe_j)=\D(e_iAe_j),
\end{gather*}
and the second computation gives $(\dimv I_j)_i=c_{ji}$. If $\gldim A<\infty$ then $C_A$ is invertible over
$\ZZ$ \cite{Happel88}, so \eqref{eq:dimPI} determines the class of any
perfect complex in terms of the classes $[P_i]$.

If $\gldim A<\infty$, the category $\per A=\Db^b(\rmod A)$ has a Serre
functor, namely the derived Nakayama functor $\nu=-\lta\D A$
\cite{BondalKapranov90,Happel88}, and it has Auslander--Reiten triangles
with translation $\ar=\nu\circ[-1]$, see \cite{Happel91}. The \emph{Coxeter
transformation} of $A$ is
\[
  \Cox_A:=\Kn(\ar)\colon\Kn(\per A)\longrightarrow\Kn(\per A),
\]
and the \emph{Coxeter polynomial} is
$\coxpol_A(x)=\det(x\cdot\id-\Cox_A)\in\ZZ[x]$.

\begin{lemma}\label{lem:coxmatrix}
In the basis $[P_1],\dots,[P_n]$ of $\Kn(\per A)$ the matrix of $\Cox_A$ is
$-C_A^{-1}C_A^{\mathrm{T}}$. Consequently
$\coxpol_A(x)=\det\!\bigl(xI_n+C_A^{-1}C_A^{\mathrm{T}}\bigr)$, and this
polynomial is unchanged if $C_A$ is replaced by $C_A^{\mathrm{T}}$ or by
$PC_AP^{\mathrm{T}}$ for any $P\in GL_n(\ZZ)$.
\end{lemma}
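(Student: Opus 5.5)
We compute $\Cox_A$ on the basis $[P_1],\dots,[P_n]$ through the dimension-vector isomorphism $\dimv\colon\Kn(\per A)\to\ZZ^n$, which is available because $\gldim A<\infty$.

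\emph{Effect of $\ar$ on projectives.} For each $j$ we have $P_j\lta \D A=e_jA\otimes_A\D A\cong e_j\D A\cong \D(Ae_j)=I_j$. Since $P_j$ is projective, no higher derived terms appear, so $\nu P_j\cong I_j$. Hence $\ar P_j=I_j[-1]$ and $[\ar P_j]=-[I_j]$ in $\Kn(\per A)$.

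\emph{The matrix.} By \eqref{eq:dimPI}, $\dimv[\ar P_j]=-C_A^{\mathrm{T}}\varepsilon_j$. The coordinate vector of a class $x$ in the basis $[P_i]$ is $C_A^{-1}\dimv x$, again by \eqref{eq:dimPI}; this uses that $C_A$ is invertible over $\ZZ$ (Happel). So the $j$-th column of the matrix of $\Cox_A$ is $-C_A^{-1}C_A^{\mathrm{T}}\varepsilon_j$, and the matrix is $-C_A^{-1}C_A^{\mathrm{T}}$. In particular
\[
\coxpol_A(x)=\det\bigl(xI_n+C_A^{-1}C_A^{\mathrm{T}}\bigr).
\]

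\emph{Invariance.} Write $M=C_A^{-1}C_A^{\mathrm{T}}$.
\begin{itemize}
\item Replacing $C_A$ by $C_A^{\mathrm{T}}$ gives $M'=C_A^{-\mathrm{T}}C_A$. Then $M'^{\mathrm{T}}=C_A^{\mathrm{T}}C_A^{-1}=C_A M C_A^{-1}$. So $M'$ is the transpose of a conjugate of $M$, and hence has the same characteristic polynomial.
\item Replacing $C_A$ by $PC_AP^{\mathrm{T}}$ gives
\[
(P^{\mathrm{T}})^{-1}C_A^{-1}P^{-1}\,P C_A^{\mathrm{T}}P^{\mathrm{T}}=(P^{\mathrm{T}})^{-1}MP^{\mathrm{T}},
\]
which is conjugate to $M$.
\end{itemize}
In both cases $\det(xI_n+\cdot)$ is unchanged.

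The only delicate point is bookkeeping of conventions: right versus left modules in $e_j\D A\cong\D(Ae_j)$, and the fact that columns are images of basis vectors. Everything else is routine linear algebra.
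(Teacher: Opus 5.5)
Your proof is correct and follows the paper's argument for the matrix essentially verbatim: $\nu P_j\cong e_j\D A\cong I_j$, then coordinates are read off through $\dimv$ and \eqref{eq:dimPI}. For the invariance you conjugate $C_A^{-1}C_A^{\mathrm{T}}$ directly, which works for any invertible $P$; the paper instead rewrites the polynomial as $\det(C_A)^{-1}\det(xC_A+C_A^{\mathrm{T}})$ and uses $\det(P)^2=1$, and both routes are fine.
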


\begin{proof}
There is an isomorphism of right $A$-modules $P_i\otimes_A\D A=e_i\D A\cong
\D(Ae_i)=I_i$, given by restriction of linear forms; hence
$\nu(P_i)\cong I_i$ and $\Cox_A[P_i]=[\,\ar P_i\,]=-[I_i]$. Writing
$[I_i]=\sum_ja_j[P_j]$ and applying $\dimv$ together with
\eqref{eq:dimPI} gives $C_Aa=C_A^{\mathrm{T}}\varepsilon_i$, that is,
$a=C_A^{-1}C_A^{\mathrm{T}}\varepsilon_i$. For the last assertions, note
$\det C_A=\pm 1$ and
$\det(xI+C^{-1}C^{\mathrm{T}})=\det(C)^{-1}\det(xC+C^{\mathrm{T}})$; the
expression $\det(xC+C^{\mathrm{T}})$ is invariant under
$C\mapsto PCP^{\mathrm{T}}$ since $\det(P)^2=1$, and it is invariant under
$C\mapsto C^{\mathrm{T}}$ because
$\det(xC^{\mathrm{T}}+C)=\det\bigl((xC^{\mathrm{T}}+C)^{\mathrm{T}}\bigr)
=\det(xC+C^{\mathrm{T}})$; compare
\cite[\S2]{Ladkani21}.
\end{proof}

It is classical that $\coxpol_A$ is a derived invariant; see
\cite[Prop.~2.3]{delaPena14} and \cite{LenzingdelaPena08}; for its
behaviour under one-point extensions see \cite{Happel09}. We shall recover
the derived invariance in Corollary~\ref{cor:coxinv} below.

\subsection{The Tamarkin--Tsygan calculus and the functor
$\HB$}\label{subsec:calculus}

A \emph{Tamarkin--Tsygan calculus}
$(\mathcal H_\bullet,\mathcal H^\bullet,\cup,[-,-],\cap,B)$ consists of a
Gerstenhaber algebra $(\mathcal H^\bullet,\cup,[-,-])$, a graded module
structure $\cap\colon\mathcal H_n\otimes\mathcal H^m\to\mathcal H_{n-m}$
over it, and a differential $B\colon\mathcal H_\bullet\to
\mathcal H_{\bullet+1}$ with $B^2=0$, subject to the compatibilities of
noncommutative differential calculus, in particular the Cartan homotopy
formula relating $B$, $\cap$ and the Lie action; see
\cite{DaletskiiGelfandTsygan90,TamarkinTsygan00} and
\cite{Armenta19} for the precise axioms used here. A \emph{morphism of
calculi} $(f,g)\colon H\to K$ is a pair of graded linear maps
$f\colon\mathcal H_\bullet\to\mathcal K_\bullet$,
$g\colon\mathcal H^\bullet\to\mathcal K^\bullet$ commuting with $\cup$,
$[-,-]$, $\cap$ and $B$ in the obvious sense \cite{Armenta19}. We write
$\TT$ for the resulting category.

The Hochschild theory of $A$ defines an object
\[
  \HB(A)=\bigl(\HH_\bullet(A),\HH^\bullet(A),\cup_A,[-,-]_A,\cap_A,B_A
  \bigr)\in\TT .
\]
Following \cite{Armenta19}, let $\Ak$ be the category whose objects are the
finite-dimensional $\kk$-algebras and whose morphisms $X\colon A\to B$ are
the complexes of $A$-$B$-bimodules that are isomorphic, in the derived
category of $A$-$B$-bimodules, to a bounded complex whose components are
finitely generated projective as right $B$-modules; composition is the
derived tensor product, $Y\circ X = X\ltb Y$ for $X\colon A\to B$ and
$Y\colon B\to C$, and the identity of $A$ is the bimodule $A$. Two
morphisms that are isomorphic in the derived category of bimodules are
equal in $\Ak$. The main theorem of \cite{Armenta19} (see also
\cite{ArmentaKeller17,ArmentaKeller19}) is:

\begin{theorem}[{\cite{Armenta19}}]\label{thm:thesis}
The assignments $A\mapsto\HB(A)$ and $X\mapsto\HB(X)$, with components
$\HB(X)=(\HH_\bullet(X),\HH^\bullet(X))$, define a functor
\[
  \HB\colon\Ak\longrightarrow\TT
\]
which sends two-sided tilting complexes to isomorphisms and is constant on
each class of derived equivalent algebras. The homological components
$\HH_\bullet(X)$ coincide with the morphisms induced by Keller's cyclic
functor \cite{Keller98}; the cohomological components $\HH^\bullet(X)$ of an
invertible $X$ respect the cup product \cite{Rickard89a}, the Gerstenhaber
bracket \cite{Keller04}, and together with $\HH_\bullet(X)$ the cap product
\cite{ArmentaKeller17,Armenta19} and the Connes differential
\cite{ArmentaKeller19}.
\end{theorem}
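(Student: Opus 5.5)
The plan is to build both components of $\HB(X)$ from the single functor $F_X=-\lta X\colon\per A\to\per B$, which exists because $X$ is perfect as a right $B$-module, and then to use Morita invariance of Hochschild theory for dg categories. First I will replace $X$ by a bounded complex of bimodules that is homotopically projective over $A^e$-$B$ and whose components are finitely generated projective over $B$. This gives a dg functor $\mathcal F_X\colon\per_{dg}A\to\per_{dg}B$ between the canonical dg enhancements, well defined up to quasi-isomorphism, and equal as a morphism of $\Ak$ for quasi-isomorphic choices of $X$.

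For the homological component I will take Keller's map of mixed complexes $C_\bullet(\mathcal F_X)$. Composing it with the Morita isomorphisms $C_\bullet(A)\simeq C_\bullet(\per_{dg}A)$ gives $\HH_\bullet(X)$. Functoriality is then the following chain of identifications:
\[
  \mathcal F_{X\ltb Y}\simeq\mathcal F_Y\circ\mathcal F_X,
  \qquad
  \mathcal F_A\simeq\id .
\]
Since $C_\bullet(\mathcal F_X)$ is a map of mixed complexes, it commutes with $B$ automatically.

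For the cohomological component I will use the dual $X^\vee=\RHom_B(X,B)$, which is a $B$-$A$-bimodule. I will define $\HH^\bullet(X)\colon\HH^\bullet(A)\to\HH^\bullet(B)$ by sending a class $\xi\colon A\to A[m]$ in $\Db(A^e)$ to the composite
\[
  B\to X^\vee\lta X\xrightarrow{X^\vee\lta\xi\lta X}X^\vee\lta X[m]\to B[m],
\]
where the first map is the coevaluation and the last is an evaluation. If $X$ is only perfect over $B$, the last map needs a trace and should be compared with Keller's functoriality for dg functors. In any case, for invertible $X$ the composite is simply conjugation, $\xi\mapsto X^{-1}\lta\xi\lta X$, and then it is clearly multiplicative for $\cup$. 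Next, functoriality under $X\ltb Y$ follows from $(X\ltb Y)^\vee\simeq Y^\vee\ltb X^\vee$ together with the compatibility of the unit and counit maps. It then follows that a two-sided tilting complex, having an inverse in $\Ak$, goes to an isomorphism. Hence $\HB$ is constant up to isomorphism on derived equivalence classes.

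The main obstacle is showing that the pair $(\HH_\bullet(X),\HH^\bullet(X))$ respects the bracket and the cap product, since these are not visible on $\per A$ as a triangulated category. For the bracket I would follow Keller. For invertible $X$ one has $\HH^\bullet(A)\cong\HH^\bullet(\per_{dg}A)\cong\HH^\bullet(\per_{dg}B)\cong\HH^\bullet(B)$, realized by $B_\infty$ quasi-isomorphisms of Hochschild cochain complexes of dg categories, obtained by restricting along the fully faithful inclusions of $A$ and of $B$. For the cap product I will show the analogous statement at the chain level: restriction along a fully faithful inclusion of dg categories with the same perfect hull induces compatible quasi-isomorphisms on the pair $(C^\bullet,C_\bullet)$. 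Such restriction preserves the brace/cap structure of the calculus up to homotopy, with the homotopies provided by the Cartan homotopy formula. Under the Morita identifications above, this identification agrees with the functoriality defined earlier. I expect this last compatibility, namely matching the abstract Morita identification of the pair with the tensor-functor definition, to be the most technical step.
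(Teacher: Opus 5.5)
\paragraph{The gap: non-invertible morphisms.} The paper does not prove this statement; it quotes it from the thesis. What it actually uses later is (i) the homological components $\HH_\bullet(X)$ for arbitrary morphisms of $\Ak$ (Proposition~\ref{prop:naturality}), and (ii) the transport of the whole calculus along two-sided tilting complexes, whose cohomological component is the conjugation $\gamma_X$ of \eqref{eq:conjchar}. Your proposal has a genuine gap exactly where it goes beyond (ii).

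Your composite defining $\HH^\bullet(X)$ begins with a map $B\to X^\vee\lta X$, and no such map exists in general. The adjunction $-\lta X\dashv-\ltb X^\vee$ supplies only the unit $A\to X\ltb X^\vee$ and the counit $X^\vee\lta X\to B$. So it is your \emph{first} arrow that is missing, not the last. An arrow $B\to X^\vee\lta X$ would be the unit of an adjunction $-\ltb X^\vee\dashv-\lta X$; that is available for invertible $X$ but not in general. Hence functoriality ``from the compatibility of the unit and counit maps'' has no content for non-invertible morphisms.

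No other choice of $\HH^\bullet(X)$ can repair this. With the definitions of $\Ak$ and $\TT$ in \S\ref{subsec:calculus}, the statement read literally is false. Let $\operatorname{char}\kk=2$, $A=\kk[x]/(x^2)$, $\rho\colon A\to\kk$ with $\rho(x)=0$, and $X={}_\rho\kk_\kk$. This is a morphism $A\to\kk$ of $\Ak$ with $\HH_\bullet(X)=\HH_\bullet(\rho)$. The chain $z=1\otimes x$ is a Hochschild $1$-cycle, the map $\partial$ with $\partial(x)=1$ is a derivation, and $z\cap\partial=\bar 1$. So $\HH_0(X)(z\cap\partial)=1$, whereas $\HH_1(X)(z)\in\HH_1(\kk)=0$. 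The projection formula therefore fails whatever $\HH^\bullet(X)(\partial)$ is.

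A characteristic-free example comes from the radical-square-zero algebra of the oriented $2$-cycle $a\colon1\to2$, $b\colon2\to1$ (paths composed left to right). Take the $2$-cycle $z=e_1\otimes a\otimes b-e_2\otimes b\otimes a$ and the normalized $2$-cocycle $u$ with $u(a,b)=e_1$, $u(b,a)=e_2$, and $u=0$ on the other pairs of basis vectors. Let $X$ be the simple module at vertex $1$. Then $z\cap u=\pm(\bar e_1-\bar e_2)$ goes to $\pm1$, while $z$ goes to $\HH_2(\kk)=0$.

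\paragraph{What your outline does establish.} The true statement is the restricted one: $\HH_\bullet$ is functorial on $\Ak$ by Keller's construction, and the full calculus is transported along the groupoid of two-sided tilting complexes. Modulo the cited results, your outline proves this. Conjugation is multiplicative for $\cup$. Keller's map of mixed complexes commutes with $B$. The bracket follows from Keller's $B_\infty$ argument.

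For the cap product you need no homotopies and no Cartan formula. For any dg functor $\iota$ one has $\iota_*(z\cap\iota^*\phi)=\iota_*z\cap\phi$ on the nose at chain level. So the projection formula passes through the Morita zigzag $A\to\mathcal C\leftarrow B$, where $\mathcal C$ is the full dg subcategory of perfect $B$-modules on $X_B$ and $B_B$.

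The step you flag as most technical amounts to this: the two restrictions of a Hochschild class of $\mathcal C$ act equally on $X$, i.e.\ they satisfy \eqref{eq:conjchar}. The argument the paper itself uses for $\sigma$ (proof of Theorem~\ref{thm:dgmain}(3)) avoids the zigzag for the cap product altogether. There the transport $A\lten_{A^e}M\cong B\lten_{B^e}(X^{-1}\lta M\lta X)$ is natural in the coefficient $M$, and one evaluates that naturality at $\eta\colon A\to A[m]$.

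A minor point: $X$ cannot in general be taken both bounded and h-projective over $A\otimes B^{\mathrm{op}}$ (take $X=A$ with $\gldim A=\infty$). An h-projective bimodule resolution, perfect over $B$ up to quasi-isomorphism, is what Keller's construction uses.
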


\subsection{Two-sided tilting complexes}\label{subsec:tilting}

A morphism $X\colon A\to B$ of $\Ak$ is a \emph{two-sided tilting complex}
if it is invertible, that is, if there exists a complex $Y$ of
$B$-$A$-bimodules with $X\ltb Y\cong A$ in $\Db(A^e)$ and
$Y\lta X\cong B$ in $\Db(B^e)$. By Rickard's theorems
\cite{Rickard89,Rickard91}, $A$ and $B$ are derived equivalent if and only
if a two-sided tilting complex $X\colon A\to B$ exists; moreover $X$ is then
perfect as a complex of left $A$-modules and as a complex of right
$B$-modules, and the inverse is computed by either one-sided dual:
\begin{equation}\label{eq:duals}
  Y\;\cong\;\RHom_B(X_B,B_B)\;\cong\;\RHom_{A}(\,{}_AX,\,{}_AA)
\end{equation}
as complexes of $B$-$A$-bimodules, see
\cite[\S4]{Rickard91} and \cite[Ch.~6]{Zimmermann14}. Derived equivalences
preserve finiteness of global dimension \cite{Happel88}.

We will use repeatedly the following elementary representability statement.

\begin{lemma}\label{lem:representative}
Let $X$ be a complex of $A$-$B$-bimodules with bounded, finite-dimensional
total cohomology, which is perfect as a complex of right $B$-modules. Then
$X$ is isomorphic, in the derived category of $A$-$B$-bimodules, to a
bounded complex whose components are finitely generated projective right
$B$-modules; in particular $X\in\Ak(A,B)$.
\end{lemma}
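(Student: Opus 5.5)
The plan is to truncate a bimodule representative of $X$ so that it becomes a bounded complex of bimodules that are projective as right $B$-modules. Since $\kk$ is a field, $B^e$-free modules restrict to free right $B$-modules. So start from a homotopically projective resolution $P\to X$ over $A\otimes_\kk B^{\mathrm{op}}$ by a complex of free bimodules $A\otimes_\kk B$, bounded above because $X$ has bounded cohomology. Each component of $P$ is then projective as a right $B$-module, though possibly infinitely generated. The task is to cut $P$ off on the left and to obtain finite generation.

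\emph{Step 1: finite generation.} Because $H^\bullet(X)$ is finite-dimensional in total and $A$, $B$ are finite-dimensional, each $H^i(X)$ is finitely generated over $A\otimes B^{\mathrm{op}}$, which is itself finite-dimensional, hence Noetherian. One can therefore build the resolution $P$ inductively from the top degree downward, choosing at each stage finitely many free generators $A\otimes_\kk B$ to cover cocycles and kill cohomology. This gives $P$ with components that are finite direct sums of $A\otimes_\kk B$. As right $B$-modules these are $\dim A$ copies of $B$ per summand, so they are finitely generated projective.

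\emph{Step 2: truncation, which is the main obstacle.} $P$ is bounded above but not necessarily below, and we must use that $X_B$ is perfect. A perfect complex of right $B$-modules has finite projective (Tor-)amplitude: there is $N$ with $\Hom_{\Db(B)}(X,M[j])=0$ for all $j>N$ and all modules $M$. Equivalently, for $n\ll0$ the cocycle module $Z^n=\ker(d^n\colon P^n\to P^{n+1})$ yields a brutal truncation
\[
\cdots\to 0\to Z^n\to P^n\to P^{n+1}\to\cdots
\]
that is quasi-isomorphic to $P$ as complexes of bimodules. Its components remain finite-dimensional bimodules. I would show that $Z^n$ is projective as a right $B$-module for $n$ sufficiently negative. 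The argument is that $Z^n$ is a syzygy of $X_B$ in the right $B$-resolution given by $P$ restricted to $B$, and $\operatorname{Ext}^1_B(Z^n,M)\cong\Hom_{\Db(B)}(X,M[N'])$ for a suitable large shift $N'$, which vanishes by perfection. The step needing care is checking that the restriction of $P$ to $B$ is still a projective resolution of $X_B$ in the appropriate sense. This holds because restriction is exact and preserves projectivity of the components, and $P$ is bounded above.

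\emph{Step 3: conclusion.} The resulting complex $0\to Z^n\to P^n\to\cdots$ is a bounded complex of $A$-$B$-bimodules. Its components are finitely generated projective right $B$-modules: the $P^i$ by Step~1 and $Z^n$ by Step~2, since $Z^n$ is finite-dimensional and hence finitely generated. It is isomorphic to $X$ in the derived category of bimodules, so $X\in\Ak(A,B)$ by the definition of morphisms in $\Ak$.
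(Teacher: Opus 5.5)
Your proposal is correct and is essentially the paper's argument: replace $X$ by a bounded-above complex of bimodules whose components are finitely generated projective on the right, then truncate at a cocycle module $Z^n$ with $n\ll0$. That module is $B$-projective by the syzygy argument, since $\operatorname{Ext}^1_B(Z^n,-)$ is a high $\Hom_{\Db(B)}(X,-[j])$ and these vanish by perfection. The only difference is the resolution: the paper uses the bar resolution $X\otimes_\kk B^{\otimes\bullet}$, whose components are finitely generated over $B$ only when those of $X$ are finite-dimensional (tacitly assumed there). Your resolution by finitely generated free $A\otimes_\kk B^{\mathrm{op}}$-modules, built using that this finite-dimensional algebra is Noetherian, gets finite generation directly from the finiteness of $H^\bullet(X)$.
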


\begin{proof}
Replacing $X$ by the total complex of a bimodule bar resolution
$\cdots\to X\otimes_\kk B\otimes_\kk B\to X\otimes_\kk B\to X$, we may
assume the components of $X$ are of the form $V\otimes_\kk B$ with $V$
finite-dimensional, hence finitely generated free as right $B$-modules,
at the price of unboundedness to the left. Brutally truncate: for
$N\gg 0$, the kernel $K$ of the differential out of degree $-N$ is a
sub-bimodule, finitely generated over $B$, and by the syzygy argument (the
truncated complex resolves an object that is perfect over $B$) $K$ is
projective as a right $B$-module once $N$ exceeds the relevant projective
dimension. The bounded complex $K\to X^{-N+1}\to\cdots$ represents $X$.
\end{proof}

\section{The Auslander--Reiten bimodule}\label{sec:ARbimodule}

\begin{lemma}\label{lem:invertible}
Let $A$ be a finite-dimensional $\kk$-algebra with $\gldim A<\infty$. Then
the bimodule $\D A$ is a two-sided tilting complex over $A$, with inverse
$\RHom_A(\D A,A)$, and $\D A$, $\D A[-1]$ and their inverses are morphisms
$A\to A$ in $\Ak$.
\end{lemma}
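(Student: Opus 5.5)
The plan is to work directly with the adjunction between $-\lta\D A$ and $-\lta Y$, where $Y:=\RHom_A(\D A,A)$ is computed over the right $A$-module structure of $\D A$; we then show that unit and counit are isomorphisms by reducing to the generator $A_A$. Since $\gldim A<\infty$, the right module $\D A_A$ has a finite projective resolution by finitely generated projectives, so $\D A$ is perfect as a complex of right $A$-modules. The same argument applied to $A^{\mathrm{op}}$, which has the same finite global dimension, shows that ${}_A\D A$ is perfect on the left. Lemma~\ref{lem:representative} then gives at once that $\D A$ and its shifts are morphisms $A\to A$ in $\Ak$.

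For invertibility, we use that the derived Nakayama functor $\nu=-\lta\D A\colon\per A\to\per A$ is a Serre functor (recalled in \S\ref{subsec:coxeter}), hence an equivalence. Concretely:
\begin{itemize}
\item $\nu$ sends $P_i$ to $I_i$, and it is fully faithful on projectives because
\[
  \Hom(P_i,P_j)\cong\D\Hom(P_j,\nu P_i)
  \quad\text{and}\quad
  \Hom_A(\D(Ae_i),\D(Ae_j))\cong\Hom_{A^{\mathrm{op}}}(Ae_j,Ae_i)\cong e_iAe_j.
\]
\item Its essential image contains the injectives, which generate $\Db^b(\rmod A)=\per A$ as a thick subcategory when $\gldim A<\infty$.
\end{itemize}
Its right adjoint is $\RHom_A(\D A,-)\cong-\lta Y$; this isomorphism holds because $\D A_A$ is perfect, and the functor restricts to $\per A$. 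Hence this right adjoint is the quasi-inverse of $\nu$.

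To upgrade from functors to bimodules, we evaluate the counit $Y\lta \D A\to A$, which is a morphism in $\Db(A^e)$, on the object $A_A$: the underlying morphism of right modules is the counit of the adjoint equivalence at $A$, hence an isomorphism. A morphism of bimodule complexes is an isomorphism in $\Db(A^e)$ as soon as it is a quasi-isomorphism, so the counit is an isomorphism of bimodules. Symmetrically, the unit $A\to \D A\lta Y=\RHom_A(\D A,\D A)$ is the biduality map. On cohomology this map is $A\to\Hom_A(\D A,\D A)\cong A$, with $\operatorname{Ext}^{>0}_A(\D A,\D A)=0$ because $\nu$ is fully faithful and $\nu(A)=\D A$; so the unit is also a quasi-isomorphism. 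Thus $\D A$ is a two-sided tilting complex with inverse $\RHom_A(\D A,A)$, and $\D A[-1]$ is invertible with inverse $Y[1]$. Finally, $Y$ has bounded, finite-dimensional cohomology and is perfect on the right, being isomorphic to $\nu^{-1}(A)$ as a right complex. Lemma~\ref{lem:representative} (or Rickard's \eqref{eq:duals}) therefore places $Y$ and $Y[1]$ in $\Ak(A,A)$.

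The main obstacle is the bookkeeping of the bimodule structures. One must check that the counit and unit are genuinely morphisms in $\Db(A^e)$, and that the identification $\RHom_A(\D A,-)\cong-\lta Y$ is compatible with the left actions. I would handle this by choosing a bimodule resolution of $\D A$ that is homotopically projective over $A^e$, as in the conventions, so that all maps are defined at the level of bimodule complexes. The remaining verification, that these maps are quasi-isomorphisms, can then be done on underlying right complexes, where it reduces to the Serre-functor statement above.
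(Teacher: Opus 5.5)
Your argument is correct, but it is organized differently from the paper's.

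\textbf{The paper's route.} The paper never invokes that $\nu$ is an equivalence.
\begin{itemize}
\item Steps~1--2 prove directly in $\Db(A^e)$ that $\D A\lta Y\cong\RHom_A(\D A,\D A)\cong\D(\D A\lta A)\cong A$. This uses the evaluation isomorphism for perfect $M$, then tensor--hom adjunction and biduality. It is the content of your unit statement.
\item In place of your counit argument, Step~3 repeats the computation over $A^{\mathrm{op}}$ to get $Y'=\RHom_{A^{\mathrm{op}}}(\D A,A)$ with $Y'\lta\D A\cong A$. It then concludes $Y\cong Y'$ by the usual left-inverse-equals-right-inverse argument.
\end{itemize}
So you obtain essential surjectivity of $\nu$ from classical module theory: the Nakayama equivalence between projectives and injectives, and generation of $\Db^b(\rmod A)$ by injectives. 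The paper obtains it instead from perfectness of $\D A$ as a left module.

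\textbf{What each route buys.} Yours is more conceptual: a bimodule complex whose one-sided tensor functor is an equivalence is invertible, because unit and counit are already bimodule maps. It also makes explicit that $Y$ and $Y[1]$ lie in $\Ak$, which the paper leaves implicit. The paper's route is self-contained, since it reproves rather than quotes Happel's theorem. Its evaluation-and-duality technique is the one reused in Lemmas~\ref{lem:commutation}, \ref{lem:twoactions} and~\ref{lem:dgduals}, and it transfers verbatim to the smooth and proper dg setting (Lemma~\ref{lem:dginvertible}).

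\textbf{Three small repairs.}
\begin{itemize}
\item $\Hom_{A^{\mathrm{op}}}(Ae_j,Ae_i)\cong e_jAe_i$, not $e_iAe_j$. As written, your dimensions would not match $\Hom_A(P_i,P_j)\cong e_jAe_i$.
\item A dimension count alone does not make the map induced by $\nu$ bijective. Add that $x\mapsto(\varphi\mapsto x\varphi)$ is injective, or cite the classical equivalence $\operatorname{proj}A\simeq\operatorname{inj}A$.
\item The claim that $-\lta Y$ preserves $\per A$ should be justified before it is used, rather than via $\nu^{-1}(A)$ afterwards. The simplest justification is that $Y_A$ has bounded finite-dimensional cohomology, hence is perfect because $\gldim A<\infty$.
\end{itemize}
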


\begin{proof}
Since $\gldim A<\infty$ and $\dim_\kk A<\infty$, the right $A$-module
$\D A$ and the left $A$-module $\D A$ are perfect; by
Lemma~\ref{lem:representative}, $\D A$ and $\D A[-1]$ are morphisms in
$\Ak$. Set $Y=\RHom_A(\D A,A)$, the derived $\Hom$ of right $A$-modules,
an object of the derived category of $A$-$A$-bimodules via the left
structures of $\D A$ and of $A$.

We prove that $Y$ is a two-sided inverse of $\D A$ in the monoidal
category $(\Db(A^e),\lta,A)$. All derived functors are computed with
h-projective ($P$ is h-projective when $Hom_A^\bullet (P,N)$ is acyclic for all acyclic complex $N$) resolutions of complexes of bimodules; since $\kk$ is a
field, these restrict to h-projective one-sided modules, so the canonical
morphisms below are defined at the level of complexes of bimodules
\cite{Keller98}.

\emph{Step 1.} For $M\in\per A$ and a complex $N$ of $A$-bimodules,
consider the canonical morphism
\[
  N\lta\RHom_A(M,A)\longrightarrow\RHom_A(M,N),\qquad
  n\otimes f\longmapsto\bigl(m\mapsto n\,f(m)\bigr).
\]
It is balanced over $A$, because the left action on $\RHom_A(M,A)$ is
$(af)(m)=a\,f(m)$, and it is a morphism of complexes of $A$-bimodules for
the remaining structures (the left structure of $N$, and the right action
$(fa)(m)=f(am)$ coming from the left structure of $M$). It is invertible
for $M=A$, both sides are triangulated functors of $M$, and $\per A$ is
generated by $A$; hence it is invertible for every $M\in\per A$. Taking
$M=N=\D A$, which is legitimate since $\D A\in\per A$, yields
\[
  \D A\lta Y\;\cong\;\RHom_A(\D A,\D A)
  \qquad\text{in }\Db(A^e).
\]
\emph{Step 2.} The tensor--hom adjunction over $\kk$ gives a natural
isomorphism $\RHom_A\bigl(X,\D V\bigr)\cong\D\bigl(X\lta V\bigr)$ for $X$ a
complex of right $A$-modules and $V$ a complex of left $A$-modules,
compatible with all auxiliary bimodule structures. With $X=\D A$ and
$V=A$ (so $\D V=\D A$):
\[
  \RHom_A(\D A,\D A)\;\cong\;\D\bigl(\D A\lta A\bigr)\;=\;\D(\D A)\;\cong\;
  A\qquad\text{in }\Db(A^e),
\]
the last isomorphism because $A$ is finite-dimensional. Hence
$\D A\lta Y\cong A$.

\emph{Step 3.} The mirror of Steps 1--2 over $A^{\mathrm{op}}$ --- using
the balanced morphism
$\RHom_{A^{\mathrm{op}}}(M,A)\lta N\to\RHom_{A^{\mathrm{op}}}(M,N)$,
$f\otimes n\mapsto(m\mapsto f(m)\,n)$, for $M\in\per A^{\mathrm{op}}$,
together with the left-handed tensor--hom adjunction
$\RHom_{A^{\mathrm{op}}}(W,\D V)\cong\D(V\lta W)$ --- yields, for
$Y':=\RHom_{A^{\mathrm{op}}}(\D A,A)$,
\[
  Y'\lta\D A\;\cong\;\RHom_{A^{\mathrm{op}}}(\D A,\D A)\;\cong\;
  \D(A\lta\D A)\;\cong\;A.
\]
By associativity and unitality of $\lta$,
\[
  Y\;\cong\;A\lta Y\;\cong\;(Y'\lta\D A)\lta Y\;\cong\;
  Y'\lta(\D A\lta Y)\;\cong\;Y'\lta A\;\cong\;Y',
\]
whence $Y\lta\D A\cong Y'\lta\D A\cong A$. Thus $\D A$ is a two-sided
tilting complex with inverse $Y\cong Y'$, and then so are its shifts, with
$(\D A[-1])^{-1}=Y[1]$.
\end{proof}

\begin{remark}
The lemma is classical: for a Gorenstein algebra --- in particular one of
finite global dimension --- the bimodule $\D A$ generates a copy of $\ZZ$
inside the derived Picard group $\mathrm{DPic}(A)$, and the associated
standard self-equivalence of $\per A$ is the derived Nakayama functor
$\nu=-\lta\D A$ of Happel \cite{Happel88,Happel91}; see
\cite{Rickard91,RouquierZimmermann03,Yekutieli99,Zimmermann14}. We have
included the short proof in order to keep the bimodule conventions of
\cite{Armenta19} explicit.
\end{remark}

\begin{definition}\label{def:ARbimodule}
Let $\gldim A<\infty$. The \emph{Serre bimodule} of $A$ is
$\serre_A:=\D A$, and the \emph{Auslander--Reiten bimodule} of $A$ is
\[
  \ar_A:=\D A[-1],
\]
both viewed as (invertible) morphisms $A\to A$ in $\Ak$. The associated
standard equivalences of $\per A$ are the Serre functor
$\nu=-\lta\serre_A$ and the derived Auslander--Reiten translation
$\ar=-\lta\ar_A$ \cite{Happel91}, and $\Kn(-\lta\ar_A)=\Cox_A$ by
Lemma~\ref{lem:coxmatrix}.
\end{definition}

The key property of $\ar_A$ is that it commutes with every derived
equivalence. This is the bimodule-level form of the uniqueness of Serre
functors.

\begin{lemma}\label{lem:commutation}
Let $A$, $B$ be finite-dimensional algebras of finite global dimension and
let $X\colon A\to B$ be a two-sided tilting complex. Then there are
isomorphisms in the derived category of $A$-$B$-bimodules
\[
  \serre_A\lta X\;\cong\;X\ltb\serre_B,
  \qquad
  \ar_A\lta X\;\cong\;X\ltb\ar_B .
\]
\end{lemma}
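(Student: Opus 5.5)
The second isomorphism follows from the first by shifting both sides by $[-1]$, since $\lta$ and $\ltb$ commute with shifts. So the whole task is to prove $\D A\lta X\cong X\ltb\D B$ in the derived category of $A$-$B$-bimodules. The plan is to compute both sides as $\kk$-duals of the inverse complex $Y=\RHom_B(X_B,B_B)$ from \eqref{eq:duals}. This gives an argument that works directly at the bimodule level, with no appeal to uniqueness of Serre functors.

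\emph{Left side.} By the tensor--hom adjunction over $\kk$ used in Step~2 of Lemma~\ref{lem:invertible}, we have $\D A\lta X\cong\D\RHom_{A^{\mathrm{op}}}(X,\D\D A)$. Since $A$ is finite-dimensional, $\D\D A\cong A$, so this is $\D\RHom_{A^{\mathrm{op}}}({}_AX,{}_AA)$. We can justify it directly:
\[
\D\RHom_{A^{\mathrm{op}}}({}_AX,{}_AA)\cong\D\bigl(\RHom_{A^{\mathrm{op}}}({}_AX,{}_AA)\bigr),\qquad \RHom_{A^{\mathrm{op}}}({}_AX,\D(\D A))\cong\D(\D A\lta X).
\]
Applying $\D$ to the second isomorphism and using that $\D A\lta X$ has finite-dimensional total cohomology gives $\D A\lta X\cong\D\RHom_{A^{\mathrm{op}}}(X,A)$. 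We need that finite-dimensionality: $X$ is perfect over $A^{\mathrm{op}}$ and $\D A$ is finite-dimensional, so $\D A\lta X$ is computed by a bounded complex of finite-dimensional spaces. By \eqref{eq:duals}, $\D A\lta X\cong\D Y$ as $A$-$B$-bimodules.

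\emph{Right side.} Symmetrically, $X\ltb\D B\cong\D\RHom_B(X_B,\D\D B)=\D\RHom_B(X_B,B_B)\cong\D Y$, again by \eqref{eq:duals}.

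Combining the two computations gives $\serre_A\lta X\cong\D Y\cong X\ltb\serre_B$.

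The main obstacle is checking that every identification is an isomorphism of \emph{bimodule} complexes, not just of one-sided complexes. Three points need care. First, the adjunction isomorphisms must be natural in the auxiliary actions: on the left side the right $B$-action on $X$ and the left $A$-action on $\D A$; on the right side the symmetric ones. Second, the biduality maps $V\to\D\D V$ must be bimodule maps; these are quasi-isomorphisms because the complexes involved have finite-dimensional total cohomology. Third, the two one-sided duals in \eqref{eq:duals} must agree as $B$-$A$-bimodules, which is exactly what Rickard's theorem provides.

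To handle these, we would work with a bimodule resolution of $X$ that is h-projective over $A^{\mathrm{op}}\otimes B$, as in the conventions of \S\ref{sec:conventions}. The evaluation and biduality maps can then be written at chain level, exactly as in Steps~1--2 of Lemma~\ref{lem:invertible}, and are visibly bimodule morphisms. Invertibility is tested after forgetting to one side, where it reduces to the perfect case through generation by $A$ (resp.\ $B$).
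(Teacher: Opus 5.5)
Your proposal is correct and is essentially the paper's proof: both sides are identified with $\D(X^{-1})$ through the two one-sided descriptions of the inverse in \eqref{eq:duals}. The difference is only in how the identification is packaged. Your composite
$\D A\lta X\to\D\D(\D A\lta X)\cong\D\RHom_{A^{\mathrm{op}}}(X,\D\D A)\cong\D\RHom_{A^{\mathrm{op}}}(X,A)$
is exactly the paper's evaluation map $\beta_X\colon\varphi\otimes x\mapsto(f\mapsto\varphi(f(x)))$, and symmetrically for $\alpha_X$. The paper proves this map invertible directly by d\'evissage, whereas you obtain it from the adjunction together with biduality on finite-dimensional cohomology.
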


\begin{proof}
The second isomorphism follows from the first by shifting. For the first,
we exhibit both sides as the $\kk$-dual of the inverse of $X$. Choose by
Lemma~\ref{lem:representative} a representative of $X$ that is bounded with
components finitely generated projective over $B$ on the right; since $X$ is
also perfect over $A$ on the left \cite{Rickard91}, fix similarly a
representative adapted to the left when needed.

\emph{(a)} The map
\[
  \alpha_X\colon\; X\ltb \D B\longrightarrow
  \D\bigl(\RHom_B(X,B)\bigr),
  \qquad
  x\otimes\varphi\longmapsto\bigl(f\mapsto\varphi(f(x))\bigr),
\]
is well defined: it is balanced over $B$ because
$\varphi(f(xb))=\varphi(f(x)b)=(b\cdot\varphi)(f(x))$, and it is a morphism
of complexes of $A$-$B$-bimodules for the standard structures. It is an
isomorphism when $X=B$, both sides are triangulated functors of the
underlying complex of right $B$-modules, and quasi-isomorphisms are
detected after forgetting the left $A$-structure; hence $\alpha_X$ is an
isomorphism for $X$ perfect over $B$.

\emph{(b)} Symmetrically, the map
\[
  \beta_X\colon\;\D A\lta X\longrightarrow
  \D\bigl(\RHom_{A}({}_AX,{}_AA)\bigr),
  \qquad
  \varphi\otimes x\longmapsto\bigl(f\mapsto\varphi(f(x))\bigr),
\]
is balanced over $A$ because $f$ is left $A$-linear,
$\varphi a\otimes x$ and $\varphi\otimes ax$ having equal images, and is an
isomorphism of complexes of $A$-$B$-bimodules for $X$ perfect over $A$ on
the left, by the same d\'evissage applied on the left.

\emph{(c)} By \eqref{eq:duals} the complexes $\RHom_B(X,B)$ and
$\RHom_A({}_AX,{}_AA)$ are isomorphic as complexes of $B$-$A$-bimodules,
both computing the inverse $X^{-1}$. Applying $\D$ and combining with (a)
and (b):
\[
  \serre_A\lta X\;\xrightarrow{\;\beta_X\;}\;\D\bigl(X^{-1}\bigr)
  \;\xleftarrow{\;\alpha_X\;}\;X\ltb\serre_B . \qedhere
\]
\end{proof}

\section{The Coxeter automorphism of the calculus}\label{sec:automorphism}

\begin{definition}\label{def:sigma}
Let $A$ be a finite-dimensional $\kk$-algebra with $\gldim A<\infty$. The
\emph{Coxeter automorphism} of the Tamarkin--Tsygan calculus of $A$ is
\[
  \sigma_A\;:=\;\HB(\ar_A)\;=\;\HB\bigl(\D A[-1]\bigr)
  \;\colon\;\HB(A)\longrightarrow\HB(A).
\]
We write $\sigma_\bullet$ and $\sigma^\bullet$ for its homological and
cohomological components.
\end{definition}

\begin{theorem}\label{thm:invariance}
Let $\kk$ be a field and $A$ a finite-dimensional $\kk$-algebra with
$\gldim A<\infty$.
\begin{enumerate}
\item $\sigma_A$ is an automorphism of $\HB(A)$ in $\TT$, with inverse
$\HB(\ar_A^{-1})$. In particular $\sigma^\bullet$ is an automorphism of the
Gerstenhaber algebra $\HH^\bullet(A)$, the pair
$(\sigma_\bullet,\sigma^\bullet)$ satisfies the projection formula
$\sigma_\bullet(z\cap\eta)=\sigma_\bullet z\cap\sigma^\bullet\eta$, and
$\sigma_\bullet$ commutes with the Connes differential $B$.
\item The map $\ZZ\to\Aut_{\TT}(\HB(A))$, $m\mapsto
\sigma_A^{\,m}=\HB\bigl(\ar_A^{\lten_A m}\bigr)$, is a group homomorphism.
\item For every two-sided tilting complex $X\colon A\to B$ one has
$\HB(X)\circ\sigma_A=\sigma_B\circ\HB(X)$ in $\TT$. Consequently the
isomorphism class of the pair $(\HB(A),\sigma_A)$ in the category of
Tamarkin--Tsygan calculi equipped with an automorphism is invariant under
derived equivalence.
\end{enumerate}
\end{theorem}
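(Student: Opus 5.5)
The whole theorem should follow formally from the functoriality of $\HB\colon\Ak\to\TT$ (Theorem~\ref{thm:thesis}) together with Lemmas~\ref{lem:invertible} and~\ref{lem:commutation}. The plan is to translate each statement about bimodule complexes into a statement about morphisms in $\Ak$ and then apply $\HB$. The only point of care is the order of composition: composition in $\Ak$ is $Y\circ X=X\ltb Y$, and morphisms isomorphic in the derived category are equal in $\Ak$.

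For (1), Lemma~\ref{lem:invertible} shows that $\ar_A=\D A[-1]$ and its inverse $\ar_A^{-1}=Y[1]$, where $Y=\RHom_A(\D A,A)$, are morphisms $A\to A$ in $\Ak$. It also gives $\ar_A\lta\ar_A^{-1}\cong A\cong\ar_A^{-1}\lta\ar_A$ in $\Db(A^e)$, so $\ar_A^{-1}\circ\ar_A=\id_A=\ar_A\circ\ar_A^{-1}$ in $\Ak$. Since $\HB$ is a functor, $\HB(\ar_A)$ and $\HB(\ar_A^{-1})$ are mutually inverse morphisms of $\TT$. By the definition of morphisms of calculi in \S\ref{subsec:calculus}, $\sigma^\bullet$ then preserves $\cup$ and $[-,-]$, the pair satisfies the projection formula for $\cap$, and $\sigma_\bullet B=B\sigma_\bullet$; these are exactly the structural properties recorded in Theorem~\ref{thm:thesis}.

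For (2), define $\ar_A^{\lten_A m}$ for negative $m$ as the $|m|$-fold derived tensor power of $\ar_A^{-1}$. Associativity and unitality of $\lta$, together with $\ar_A\lta\ar_A^{-1}\cong A$, give $\ar_A^{\lten_A m}\lta\ar_A^{\lten_A m'}\cong\ar_A^{\lten_A(m+m')}$ in $\Db(A^e)$. Hence these powers compose additively in $\Ak$, and $\HB$ takes them to $\sigma_A^{m+m'}$. Because all the powers are powers of a single element, the reversal of composition order in $\Ak$ causes no problem.

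For (3), let $X\colon A\to B$ be a two-sided tilting complex; by Happel, $\gldim B<\infty$, so $\ar_B$ is defined. Lemma~\ref{lem:commutation} gives $\ar_A\lta X\cong X\ltb\ar_B$. In $\Ak$ this reads $X\circ\ar_A=\ar_B\circ X$, and applying $\HB$ yields $\HB(X)\circ\sigma_A=\sigma_B\circ\HB(X)$. Since $\HB(X)$ is an isomorphism in $\TT$ by Theorem~\ref{thm:thesis}, it is an isomorphism of pairs $(\HB(A),\sigma_A)\cong(\HB(B),\sigma_B)$. This proves the invariance statement, because by Rickard derived equivalent algebras are related by such an $X$.

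I expect the main, if modest, obstacle to be bookkeeping in the translation between bimodule isomorphisms and equalities in $\Ak$. One must check that the composite $\ar_A\lta X$, which is defined as a morphism $A\to B$, lies in $\Ak(A,B)$. It is perfect over $B$ because $X$ is perfect over $B$ and $\ar_A$ is perfect over $A$, so Lemma~\ref{lem:representative} applies. One must also check that the side conventions of Lemma~\ref{lem:commutation} match the convention $Y\circ X=X\ltb Y$. Beyond this, all content resides in the earlier results, and the proof should be short.
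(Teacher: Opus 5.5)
Your proposal is correct and follows the paper's proof essentially step by step. For (1) and (2) you use the invertibility of $\ar_A$ in $\Ak$ from Lemma~\ref{lem:invertible} together with the functoriality of $\HB$; for (3) you read Lemma~\ref{lem:commutation} as the equality $X\circ\ar_A=\ar_B\circ X$ in $\Ak$ under the convention $Y\circ X=X\ltb Y$ and then invoke Rickard's theorem, and your extra bookkeeping (finiteness of $\gldim B$, membership of $\ar_A\lta X$ in $\Ak(A,B)$, negative powers via $\ar_A^{-1}$) is sound and only makes explicit what the paper leaves implicit.
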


\begin{proof}
(1) Lemma~\ref{lem:invertible} shows that $\ar_A$ is invertible in $\Ak$,
with inverse given by $\RHom_A(\D A,A)[1]$, and by
Theorem~\ref{thm:thesis} the functor $\HB$ sends the relations
$\ar_A^{-1}\circ\ar_A=\id_A$ and $\ar_A\circ\ar_A^{-1}=\id_A$ of $\Ak$ to
the corresponding relations in $\TT$. Every morphism of $\TT$ commutes by
definition with $\cup$, $[-,-]$, $\cap$ and $B$; an isomorphism whose
cohomological component preserves $\cup$ and $[-,-]$ is an automorphism of
the Gerstenhaber algebra.

(2) Functoriality of $\HB$ and associativity of $\lten$ in $\Ak$.

(3) In $\Ak$ the two composites $A\to B$ in question are represented by the
bimodule complexes $X\ltb\ar_B$ and $\ar_A\lta X$, which are isomorphic by
Lemma~\ref{lem:commutation}; since morphisms of $\Ak$ are taken up to
isomorphism in the derived category of bimodules, the two composites are
\emph{equal} in $\Ak$, and we apply $\HB$:
\[
  \sigma_B\circ\HB(X)=\HB(X\ltb\ar_B)=\HB(\ar_A\lta X)
  =\HB(X)\circ\sigma_A .
\]
If $F\colon\Db(A)\to\Db(B)$ is any equivalence, by Rickard's theorem we may
choose a two-sided tilting complex $X$ \cite{Rickard91}; then $\HB(X)$ is an
isomorphism $\HB(A)\to\HB(B)$ in $\TT$ commuting with the $\sigma$'s, which
is the claimed invariance.
\end{proof}

\begin{remark}\label{rem:dpic}
Theorem~\ref{thm:thesis} restricts to an action of the derived Picard group
$\mathrm{DPic}(A)$ of \cite{Yekutieli99,RouquierZimmermann03} on the object
$\HB(A)$, i.e.\ a group homomorphism from $\mathrm{DPic}(A)$ to
$\Aut_{\TT}(\HB(A))$. The automorphism $\sigma_A$ is the
image of the class of $\ar_A$, which by Lemma~\ref{lem:commutation} is
fixed by the isomorphisms $\mathrm{DPic}(A)\cong\mathrm{DPic}(B)$ induced by
derived equivalences. Theorem~\ref{thm:invariance}(3) is the shadow of this
naturality.
\end{remark}

\begin{remark}\label{rem:models}
The thesis \cite{Armenta19} constructs three isomorphic models
$\HB$, $\widehat{\HB}$, $\widetilde{\HB}$ of the calculus and compatible
comparison isomorphisms. All statements about $(\HB(A),\sigma_A)$ in this
paper are invariant under these comparisons.
\end{remark}

\begin{remark}
No elementarity hypothesis enters Theorem~\ref{thm:invariance}: it holds for
every finite-dimensional algebra of finite global dimension over an
arbitrary field.
\end{remark}

\section{Hochschild homology in degree zero and the Coxeter
polynomial}\label{sec:computation}

In this section $A$ is a finite-dimensional \emph{elementary}
$\kk$-algebra of finite global dimension with $n$ simple modules; we keep
the notation of
\S\ref{subsec:coxeter}.

\subsection{Concentration in degree zero}

\begin{proposition}\label{prop:concentration}
$\HH_i(A)=0$ for all $i\geq 1$, and the classes
$\bar e_1,\dots,\bar e_n$ form a $\kk$-basis of $\HH_0(A)=A/[A,A]$.
\end{proposition}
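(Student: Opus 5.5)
The plan is to compute $\HH_\bullet(A)$ with a Hochschild complex taken relative to the separable subalgebra $S=\kk e_1\times\cdots\times\kk e_n\cong A/\rad A$, and then to use the finiteness of $\gldim A$ to reduce the computation to $S$ itself. First I would use the Wedderburn--Malcev splitting $A=S\oplus\rad A$, which is available because $A$ is elementary. Since $S$ is separable over $\kk$, the normalized bar resolution relative to $S$,
\[
  \cdots\to A\otimes_S(\rad A)^{\otimes_S m}\otimes_S A\to\cdots\to A\otimes_S A\to A\to 0,
\]
is a projective resolution of $A$ over $A^e$. Applying $A\otimes_{A^e}-$ therefore computes $\HH_\bullet(A)$ as the homology of the relative complex
\[
  C_m=A\otimes_{S^e}(\rad A)^{\otimes_S m}.
\]
In degree $0$ this gives $\HH_0(A)=A/[A,A]$, and this quotient contains $A\otimes_{S^e}S=\bigoplus_i e_iAe_i$ as its relevant part.

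Next I would replace the bar resolution by the minimal projective bimodule resolution $P_m=A\otimes_S V_m\otimes_S A$, where $V_m=\operatorname{Tor}^A_m(S,S)$ as $S$-bimodules. This is the step where $\gldim A<\infty$ enters: $V_m=0$ for $m>\gldim A$, so $A\otimes_{A^e}P_\bullet$ is a bounded complex with terms $\bigoplus_{i,j}e_jAe_i\otimes e_iV_me_j$. By minimality, every differential of $P_\bullet$ has image in $\rad A\otimes_S V\otimes_S A+A\otimes_S V\otimes_S\rad A$. I would then filter $A\otimes_{A^e}P_\bullet$ by the powers $\rad^pA\otimes_{S^e}V_\bullet$; the filtration is finite because $\rad A$ is nilpotent. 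On the associated graded, the induced differential vanishes. So the spectral sequence starts from $\bigoplus_p\rad^pA/\rad^{p+1}A\otimes_{S^e}V_\bullet$, and it converges because the filtration is finite.

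The main obstacle is to show that all higher terms cancel, so that only the weight-zero, degree-zero part $S\otimes_{S^e}V_0=S\otimes_{S^e}S\cong\kk^n$ survives. The natural tool is a K-theoretic or additivity argument. Since $\gldim A<\infty$, each simple module $S_i$ is perfect, so the simples $S_1,\dots,S_n$ generate $\per A$ as a thick subcategory. I would first try Keller's additivity and dévissage for Hochschild homology \cite{Keller98} to identify $\HH_\bullet(A)$ with $\HH_\bullet$ of the dg algebra $E=\RHom_A(S,S)$. This $E$ is augmented over $E^0=S$, and its augmentation ideal is concentrated in positive cohomological degrees $1,\dots,\gldim A$. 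For such an $E$, the reduced relative Hochschild complex $E\otimes_{S^e}\bar E^{\otimes_S m}$ has total degree equal to (internal degree $-\,m$), and the internal degree is at least $m$. A bookkeeping argument then confines all homology in homological degree $\geq1$ to pieces that I would show to be acyclic, via the contracting homotopy coming from the augmentation, and it leaves $S\otimes_{S^e}S=\kk^n$ in degree $0$.

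Granting $\dim_\kk\HH_0(A)=n$ from this argument, the basis statement follows from a direct check. The classes $\bar e_i$ span $A/[A,A]$ modulo $\rad A$, since $\rad A\subseteq[A,A]+\sum_i\kk e_i$ in the relevant weight. They are also linearly independent: each functional $a\mapsto$ (coefficient of $e_i$ in the image of $a$ in $A/\rad A\cong\kk^n$) vanishes on $[A,A]$, because $A/\rad A$ is commutative.
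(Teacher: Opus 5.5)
Your treatment of the higher groups is sound, and it is shorter than you make it. Replace $E=\RHom_A(S,S)$ by a strictly unital minimal model on $\operatorname{Ext}^\bullet_A(S,S)$, augmented over $S=\operatorname{Ext}^0_A(S,S)$. Then every chain $a_0[a_1|\cdots|a_m]$ of the $S$-relative reduced Hochschild complex has cohomological degree $|a_0|+\sum_{i\geq1}(|a_i|-1)\geq0$. So the complex is zero in positive homological degrees, and $\HH_{\geq1}(A)\cong\HH_{\geq1}(E)=0$ outright; there is nothing to contract. The identification $\HH_\bullet(A)\cong\HH_\bullet(E)$ is derived Morita invariance, since $S$ is a compact generator of $\Db(A)$ when $\gldim A<\infty$; it is not additivity. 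This half is in substance Keller's Proposition~2.5, which the paper cites.

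The genuine gap is in degree zero. The same degree count shows that the degree-zero part of that complex is $\bigoplus_{m\geq0}S\otimes_{S^e}(sE^1)^{\otimes_S m}$ with $E^1=\operatorname{Ext}^1_A(S,S)$, and that nothing maps into it. Hence $\HH_0(E)$ is the entire kernel of $b$ on this space, and no homotopy coming from the augmentation can discard anything. Already the words of length one are cycles: for $\xi\in e_iE^1e_i=\operatorname{Ext}^1_A(S_i,S_i)$ one has $b(e_i[\xi])=\pm(e_i\xi-\xi e_i)=0$.

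So your assertion that only $S\otimes_{S^e}S\cong\kk^n$ survives contains the no-loops theorem for algebras of finite global dimension. That theorem is a consequence of Lenzing's theorem $\rad A\subseteq[A,A]$, which is exactly the nontrivial input the paper invokes; the reverse inclusion is the easy one you also use. Degree bookkeeping cannot supply it. Your radical-filtration spectral sequence stops at the same point, and your final paragraph explicitly grants $\dim_\kk\HH_0(A)=n$. The spanning remark there is circular, though the independence argument is fine.

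You can close the gap inside your own setup. Write $S_i=e_iS$ and ${}_jS=Se_j$. The bounded complex $A\otimes_{S^e}V_\bullet$ has Euler characteristic
\[
  \sum_m(-1)^m\dim_\kk(A\otimes_{S^e}V_m)=\sum_{i,j}c_{ij}\,\beta_{ij},
  \qquad
  \beta_{ij}=\sum_m(-1)^m\dim_\kk\operatorname{Tor}^A_m(S_i,{}_jS).
\]
Comparing dimension vectors along minimal projective resolutions of the simple modules gives $\beta=(C_A^{-1})^{\mathrm{T}}$, so this Euler characteristic is $\tr(C_AC_A^{-1})=n$. Combined with $\HH_{\geq1}(A)=0$, this gives $\dim_\kk\HH_0(A)=n$. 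Your independence argument for the $\bar e_i$ then shows they form a basis. Alternatively, simply cite Lenzing's theorem as the paper does.
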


\begin{proof}
Since $A/\rad A\cong\kk^n$ is commutative, $[A,A]\subseteq\rad A$;
since $\gldim A<\infty$, Lenzing's theorem gives
$\rad A\subseteq[A,A]$, see \cite[\S5]{Lenzing69}. Hence
$A/[A,A]=A/\rad A=\kk^n$ with basis the $\bar e_i$. The vanishing of the
higher Hochschild homology is \cite[Prop.~2.5]{Keller98}; see also
\cite[Prop.~1]{Han20} for this formulation.
\end{proof}

\subsection{Euler classes}

For a finitely generated projective right $A$-module $P$, written
$P=\operatorname{im}(e)$ for an idempotent matrix $e=(e_{st})\in M_r(A)$,
the \emph{Hattori--Stallings trace} of an endomorphism $f$ of $P$,
represented by a matrix $(f_{st})\in M_r(A)$ with $f=efe$, is
\[
  \tr_P(f)\;:=\;\sum_{s}\overline{f_{ss}}\ \in\ \HH_0(A)=A/[A,A];
\]
it is independent of all choices and additive \cite{Hattori65,Stallings65}.
The \emph{Euler class} of $M\in\per A$ is
\[
  \ch(M)\;:=\;\sum_j(-1)^j\,\tr_{P^j}(\id)\ \in\ \HH_0(A)
\]
for any bounded complex $P^\bullet$ of finitely generated projectives
isomorphic to $M$ in $\Db(A)$; equivalently $\ch$ is the composition of the
canonical isomorphism between $\Kn(\per A)$ and
$\Kn(\operatorname{proj}A)$ with the trace of the identity. It is additive on triangles, satisfies
$\ch(M[1])=-\ch(M)$, and $\ch(P_i)=\bar e_i$.

\begin{lemma}\label{lem:chformula}
For every $M\in\per A$ one has
$\ch(M)=\sum_i a_i\,\bar e_i$ with
$a=C_A^{-1}\,\dimv M\in\ZZ^n$. In particular
$\ch\otimes\kk\colon\Kn(\per A)\otimes_\ZZ\kk\to\HH_0(A)$ is an
isomorphism sending $[P_i]\otimes 1$ to $\bar e_i$.
\end{lemma}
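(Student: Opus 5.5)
The plan is to reduce everything to the classes of the indecomposable projectives, using additivity of both sides. Since $\gldim A<\infty$, the classes $[P_1],\dots,[P_n]$ form a $\ZZ$-basis of $\Kn(\per A)$, and $\dimv\colon\Kn(\per A)\to\ZZ^n$ is an isomorphism. Both $\ch$ and $\dimv$ are additive on triangles and change sign under shift. Hence $\ch$ factors through $\Kn(\per A)$ as a group homomorphism $\Kn(\per A)\to\HH_0(A)$, and $M\mapsto\dimv M$ likewise factors through $\Kn(\per A)$. It therefore suffices to verify the formula $\ch(M)=\sum_i a_i\bar e_i$ with $a=C_A^{-1}\dimv M$ on the generators $M=P_j$, and then extend $\ZZ$-linearly. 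This works because the map $M\mapsto\sum_i (C_A^{-1}\dimv M)_i\,\bar e_i$ is also additive: it is a composite of the additive map $\dimv$, the linear map $C_A^{-1}$, and a linear map to $\HH_0(A)$.

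On a generator $P_j$ the computation is immediate. We have $\ch(P_j)=\tr_{P_j}(\id)=\bar e_j$, since $P_j=e_jA$ is the image of the $1\times1$ idempotent matrix $(e_j)$. On the other side, \eqref{eq:dimPI} gives $\dimv P_j=C_A\varepsilon_j$, so $C_A^{-1}\dimv P_j=\varepsilon_j$, and the formula yields $\bar e_j$ as well. Here we need $C_A$ to be invertible over $\ZZ$, which holds because $\gldim A<\infty$ (recalled in \S\ref{subsec:coxeter}); this also guarantees that $a\in\ZZ^n$ for every $M$. For a general $M\in\per A$ we write $[M]=\sum_j m_j[P_j]$ in $\Kn(\per A)$. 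Linearity then gives $\ch(M)=\sum_j m_j\bar e_j$, and $\dimv M=C_A m$, so $m=C_A^{-1}\dimv M$, as claimed.

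For the second assertion, $\ch\otimes\kk$ sends the $\kk$-basis $[P_i]\otimes1$ of $\Kn(\per A)\otimes_\ZZ\kk\cong\kk^n$ to $\bar e_i$. By Proposition~\ref{prop:concentration}, the $\bar e_i$ form a $\kk$-basis of $\HH_0(A)$. So $\ch\otimes\kk$ carries a basis to a basis and is an isomorphism.

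The only delicate point I expect is the well-definedness and additivity of $\ch$ on $\per A$, that is, independence of the chosen projective representative and additivity on triangles. I will take these as already asserted in the paragraph defining Euler classes, via the identification $\Kn(\per A)\cong\Kn(\operatorname{proj}A)$ and the additivity of Hattori--Stallings traces. With that granted, the remainder is the bookkeeping above.
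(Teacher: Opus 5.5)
Your proposal is correct and is essentially the paper's proof: both $\ch$ and $M\mapsto C_A^{-1}\dimv M$ are additive on triangles, and they agree on the $\ZZ$-basis $[P_1],\dots,[P_n]$ of $\Kn(\per A)$ by \eqref{eq:dimPI}. You also make the isomorphism clause explicit (basis to basis, via Proposition~\ref{prop:concentration}), which the paper leaves implicit; integrality of $a$ is in fact automatic, since $a$ is just the coordinate vector of $[M]$ in the basis $[P_j]$.
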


\begin{proof}
Both $\ch$ and $a(M):=C_A^{-1}\dimv M$ are additive on triangles, and they
agree on the generators $P_i$ of $\Kn(\per A)$ by \eqref{eq:dimPI}; the
classes $[P_i]$ form a $\ZZ$-basis of $\Kn(\per A)$.
\end{proof}

\subsection{Naturality of Euler classes}

\begin{proposition}\label{prop:naturality}
Let $X\colon A\to B$ be a morphism of $\Ak$ between elementary algebras of
finite global dimension, represented by a bounded complex of
$A$-$B$-bimodules whose components $X^j$ are finitely generated projective
right $B$-modules. Then:
\begin{enumerate}
\item the degree-zero homological component of $\HB(X)$ is the
\emph{tensor-trace} map
\[
  \HH_0(X)\colon A/[A,A]\longrightarrow B/[B,B],
  \qquad
  \bar a\longmapsto\sum_j(-1)^j\,\tr_{X^j}(\lambda_a),
\]
where $\lambda_a$ denotes left multiplication by $a$ on $X^j$;
\item for all $M\in\per A$,
\[
  \HB(X)_0\bigl(\ch(M)\bigr)\;=\;\ch\bigl(M\lta X\bigr).
\]
\end{enumerate}
\end{proposition}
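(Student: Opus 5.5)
For part (1), I would go back to how \cite{Armenta19} (following Keller's cyclic functor \cite{Keller98}) defines the homological component $\HH_\bullet(X)$. A complex $X$ of $A$-$B$-bimodules that is perfect over $B$ gives a dg functor $-\lta X\colon\per A\to\per B$. On Hochschild homology this induces a map. Here Hochschild homology is computed by the Hochschild complex of the dg category of perfect complexes, or equivalently of the full subcategory of bounded complexes of finitely generated projectives. By Morita invariance, this is identified with $\HH_\bullet(A)$ and $\HH_\bullet(B)$. In degree zero, $\HH_0$ of a category of projectives is spanned by classes of endomorphisms $[f]$ with $f\in\operatorname{End}(P)$. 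Morita invariance identifies $[f]$ with the Hattori--Stallings trace $\tr_P(f)$, and for complexes it identifies $[f]$ with the alternating sum of the traces of the components. I would take $\bar a\in A/[A,A]$, view it as the class of the endomorphism $\lambda_a$ of $A_A$, and apply the functor. This gives the class of the endomorphism $\lambda_a\otimes_A X=\lambda_a$ of the complex $A\otimes_A X=X$. That class is a closed degree-zero element, since $\lambda_a$ commutes with the differential because the differential is left $A$-linear. Transporting it through the Morita isomorphism for $\per B$ then gives $\sum_j(-1)^j\tr_{X^j}(\lambda_a)$.

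The step needing care is the identification, for an endomorphism $f$ of a bounded complex of projectives $P^\bullet$, of the class of $f$ in $\HH_0$ of the dg category with $\sum_j(-1)^j\tr_{P^j}(f^j)$. I would prove it by induction on the length of $P^\bullet$ using brutal truncation triangles. Hochschild homology of the dg category of perfect complexes is additive on the pretriangulated hull. So the class of an endomorphism of an extension, when that endomorphism preserves the filtration, is the sum of the classes on the subquotient and the quotient, with the sign changing under the shift. The sign comes from the Koszul rule for $[1]$. The base case is a single projective in degree $0$, where the identification is the definition of the Hattori--Stallings trace via the idempotent matrix. I would also check that the map is well defined on $A/[A,A]$, which follows from the fact that $\lambda_{ab}-\lambda_{ba}=\lambda_a\lambda_b-\lambda_b\lambda_a$ and that traces vanish on commutators. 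Finally, the answer is independent of the chosen representative of $X$, because the construction is functorial in quasi-isomorphisms of bimodule complexes.

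Part (2) then follows from (1) by linearity. Both sides are additive on triangles in $M$ and change sign under shift. On the right this holds because $-\lta X$ is triangulated and $\ch$ is additive; on the left it holds because $\ch$ is additive and $\HB(X)_0$ is linear. So it suffices to check $M=P_i=e_iA$, where $\ch(P_i)=\bar e_i$ and these classes generate $\Kn(\per A)$. By (1),
\[
\HB(X)_0(\bar e_i)=\sum_j(-1)^j\tr_{X^j}(\lambda_{e_i}).
\]
Now $\lambda_{e_i}$ is an idempotent endomorphism of the projective $B$-module $X^j$ with image $e_iX^j$. The trace of an idempotent equals the trace of the identity of its image, because one can take the idempotent matrix presentation of $X^j$ composed with $\lambda_{e_i}$. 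So the sum is $\sum_j(-1)^j\tr_{e_iX^j}(\id)=\ch(e_iX)=\ch(e_iA\lta X)$, using $e_iA\otimes_A X\cong e_iX$, which is a bounded complex of projective $B$-modules.

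I expect the main obstacle to be the first step of part (1): pinning down the identification of the functorial map from \cite{Armenta19,Keller98} with the concrete trace formula, including the signs. Once that is settled, part (2) is a formal dévissage.
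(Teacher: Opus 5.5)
Your proposal is correct and follows essentially the same route as the paper. For (1) the paper likewise uses Theorem~\ref{thm:thesis} to identify $\HB(X)_0$ with Keller's map, namely $a\mapsto\lambda_a$ into the endomorphism dg algebra of $X$ over $B$ followed by the generalized trace; the paper simply cites Han and Keller for the resulting alternating-sum formula, whereas you propose to verify it yourself by brutal-truncation d\'evissage. Part (2) is the same reduction to $M=P_i=e_iA$, using $e_iA\otimes_A X\cong e_iX$ and $\tr_{\operatorname{im}\pi}(\id)=\tr_Q(\pi)$ for an idempotent $\pi$.
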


\begin{proof}
(1) By Theorem~\ref{thm:thesis} the homological components of $\HB(X)$
coincide with the morphisms induced by Keller's cyclic functor
\cite{Keller98}, which in weight zero is the classical map induced on
Hochschild complexes by the bimodule $X$: the composite of the inclusion of
$A$ into the endomorphism dg algebra of $X$ over $B$ with the generalized
trace. Its zeroth homology is the displayed formula; see
\cite[\S3.3]{Han20}, and \cite[\S2]{Keller98}, \cite{Shklyarov13} for the
same map in the dg framework.

(2) Both sides are additive in $[M]\in\Kn(\per A)$ --- the left side
because $\ch$ is and $\HB(X)_0$ is linear; the right side because
$-\lta X$ is triangulated and $\ch_B$ is additive on triangles --- so it
suffices to treat $M=P_i=e_iA$. Then $M\lta X\cong e_iX$, a bounded
complex with components $e_iX^j$ finitely generated projective over $B$,
and
\begin{align*}
  \ch(e_iX)&=\sum_j(-1)^j\tr_{e_iX^j}(\id)
            =\sum_j(-1)^j\tr_{X^j}(\lambda_{e_i})\\
           &=\HH_0(X)(\bar e_i)=\HB(X)_0\bigl(\ch(P_i)\bigr),
\end{align*}
where the middle equality is the standard property of the Hattori--Stallings
trace: for an idempotent endomorphism $\pi$ of a finitely generated
projective module $Q$, $\tr_{\operatorname{im}\pi}(\id)=\tr_Q(\pi)$.
\end{proof}

\begin{remark}\label{rem:verify}
Part (1) is the single point of this paper where the normalization of the
maps $\HH_\bullet(X)$ of \cite{Armenta19} is compared with the classical
tensor-trace; the comparison is the content of the main theorem of
\cite{Armenta19} --- the identification of $\HH_\bullet(X)$ with the map
induced by Keller's cyclic functor --- together with the construction of
\cite[\S2]{Keller98}, and is
independent of the choice among the three models of
Remark~\ref{rem:models}.
\end{remark}

\subsection{The matrix of the Coxeter automorphism}

\begin{theorem}\label{thm:matrix}
Let $A$ be a finite-dimensional elementary $\kk$-algebra with
$\gldim A<\infty$. In the basis $\bar e_1,\dots,\bar e_n$ of
$\HH_0(A)=\HH_\bullet(A)$ the matrix of the homological component
$\sigma_\bullet$ of the Coxeter automorphism is
\[
  -\,C_A^{-1}C_A^{\mathrm{T}} .
\]
The Euler class intertwines the Coxeter transformation with
$\sigma_\bullet$, that is, the square
\[
  \begin{array}{ccc}
   \Kn(\per A)\otimes_\ZZ\kk & \xrightarrow{\;\Cox_A\otimes\kk\;} &
   \Kn(\per A)\otimes_\ZZ\kk\\[3pt]
   \ch\otimes\kk\Big\downarrow & & \Big\downarrow\ch\otimes\kk\\[3pt]
   \HH_0(A) & \xrightarrow{\;\;\sigma_\bullet\;\;} & \HH_0(A)
  \end{array}
\]
commutes, and
\[
  \det\bigl(x\cdot\id-\sigma_\bullet\bigr)\;=\;
  \det\bigl(xI_n+C_A^{-1}C_A^{\mathrm{T}}\bigr)\;=\;\coxpol_A(x)
  \quad\text{in }\kk[x].
\]
\end{theorem}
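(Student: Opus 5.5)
The plan is to apply Proposition~\ref{prop:naturality}(2) with $X=\ar_A=\D A[-1]$, which is a legitimate morphism $A\to A$ of $\Ak$ by Lemma~\ref{lem:invertible}. By Lemma~\ref{lem:representative} we may represent $\ar_A$ by a bounded complex of $A$-bimodules whose components are finitely generated projective right $A$-modules. Then Proposition~\ref{prop:naturality}(2) gives, for every $M\in\per A$,
\[
  \sigma_\bullet\bigl(\ch(M)\bigr)\;=\;\HB(\ar_A)_0\bigl(\ch(M)\bigr)\;=\;\ch\bigl(M\lta\ar_A\bigr)\;=\;\ch(\ar M).
\]
Since $\ch$ factors through $\Kn(\per A)$, this reads $\sigma_\bullet\circ\ch=\ch\circ\Cox_A$ on $\Kn(\per A)$. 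Extending scalars gives the commutative square, because $\ch\otimes\kk$ is $\kk$-linear.

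For the matrix, I will evaluate the square on the basis. By Lemma~\ref{lem:chformula}, $\ch\otimes\kk$ is an isomorphism sending $[P_i]\otimes 1$ to $\bar e_i$. So the matrix of $\sigma_\bullet$ in the basis $\bar e_1,\dots,\bar e_n$ equals the matrix of $\Cox_A\otimes\kk$ in the basis $[P_i]\otimes1$. By Lemma~\ref{lem:coxmatrix}, that matrix is $-C_A^{-1}C_A^{\mathrm T}$, read in $\kk$; this makes sense because $C_A\in GL_n(\ZZ)$.

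As a direct check, $\ch(\ar P_i)=-\ch(I_i)$, and Lemma~\ref{lem:chformula} together with \eqref{eq:dimPI} gives $\ch(I_i)=\sum_j (C_A^{-1}C_A^{\mathrm T}\varepsilon_i)_j\,\bar e_j$. The identity $\HH_0(A)=\HH_\bullet(A)$ is Proposition~\ref{prop:concentration}. The characteristic polynomial statement then follows by taking determinants of conjugate matrices and applying the formula for $\coxpol_A$ in Lemma~\ref{lem:coxmatrix}, reduced to $\kk[x]$.

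The main obstacle is the applicability of Proposition~\ref{prop:naturality}. Its hypotheses require a representative of $\D A[-1]$ with right-projective components; Lemma~\ref{lem:representative} supplies this, using that $\D A$ is perfect over $A$ on the right because $\gldim A<\infty$. One must also check that the shift $[-1]$ is handled consistently: the sign $(-1)^j$ in the tensor-trace formula produces exactly the minus sign, matching $\ch(M[1])=-\ch(M)$. Everything else is bookkeeping, with the identification of $\HH_0(X)$ as the tensor-trace already packaged in Proposition~\ref{prop:naturality}(1).
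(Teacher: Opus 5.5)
Your proposal is correct and follows the paper's proof. Both apply Proposition~\ref{prop:naturality}(2) to $\ar_A=\D A[-1]$, using the right-projective representative supplied by Lemmas~\ref{lem:representative} and~\ref{lem:invertible}, and then transport the matrix of Lemma~\ref{lem:coxmatrix} through $\ch\otimes\kk$; the only difference is the order, since you derive the commuting square first and read off the matrix from it, whereas the paper first computes $\sigma_\bullet(\bar e_i)=-\ch(I_i)$ and then restates this as the square.
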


\begin{proof}
By Proposition~\ref{prop:concentration}, $\HH_\bullet(A)=\HH_0(A)$ with
basis the $\bar e_i=\ch(P_i)$. By Proposition~\ref{prop:naturality}(2)
applied to the morphism $\ar_A=\D A[-1]$ of $\Ak$ (a bounded
right-projective representative exists by
Lemmas~\ref{lem:representative} and~\ref{lem:invertible}),
\[
  \sigma_\bullet(\bar e_i)
  =\HB(\ar_A)_0\bigl(\ch(P_i)\bigr)
  =\ch\bigl(P_i\lta\D A[-1]\bigr)
  =-\,\ch\bigl(e_i\D A\bigr)
  =-\,\ch(I_i),
\]
using the isomorphism $e_i\D A\cong\D(Ae_i)=I_i$ of right $A$-modules from
the proof of Lemma~\ref{lem:coxmatrix}. By Lemma~\ref{lem:chformula} and
\eqref{eq:dimPI},
$\ch(I_i)=\sum_j(C_A^{-1}C_A^{\mathrm{T}})_{ji}\,\bar e_j$, which is the
asserted matrix. The commuting square restates this computation through
Lemma~\ref{lem:coxmatrix}, since $\Cox_A$ has the same matrix
$-C_A^{-1}C_A^{\mathrm{T}}$ in the basis $[P_i]$ and
$\ch\otimes\kk$ sends $[P_i]$ to $\bar e_i$. The characteristic polynomial
is then $\coxpol_A$ by Lemma~\ref{lem:coxmatrix}, read in $\kk[x]$.
\end{proof}

\begin{corollary}\label{cor:coxinv}
The Coxeter polynomial of a finite-dimensional elementary algebra of finite
global dimension is a derived invariant.
\end{corollary}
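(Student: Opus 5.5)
Let $A$ and $B$ be derived equivalent elementary algebras of finite global dimension. The plan is to combine Theorem~\ref{thm:invariance}(3) with Theorem~\ref{thm:matrix}. By Rickard's theorem there is a two-sided tilting complex $X\colon A\to B$. Derived equivalences preserve finiteness of global dimension \cite{Happel88}, so $B$ also satisfies the hypotheses. Theorem~\ref{thm:invariance}(3) gives $\HB(X)\circ\sigma_A=\sigma_B\circ\HB(X)$, and $\HB(X)$ is an isomorphism in $\TT$. Its homological component $\HH_\bullet(X)\colon\HH_0(A)\to\HH_0(B)$ is therefore a $\kk$-linear isomorphism that conjugates $\sigma_{\bullet,A}$ into $\sigma_{\bullet,B}$. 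Hence the two operators have the same characteristic polynomial. By Theorem~\ref{thm:matrix} these characteristic polynomials are $\coxpol_A$ and $\coxpol_B$, viewed in $\kk[x]$.

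The one real obstacle is that this argument only shows equality after reduction $\ZZ\to\kk$, whereas the Coxeter polynomial lives in $\ZZ[x]$. When $\operatorname{char}\kk=0$ this costs nothing, since $\ZZ\to\kk$ is injective. In positive characteristic I would avoid the field altogether and argue integrally through $\Kn$. The isomorphism of $B$-modules-level data is supplied by Proposition~\ref{prop:naturality}(2), or more directly by the triangulated equivalence $F=-\lta X\colon\per A\to\per B$. This $F$ induces a group isomorphism $\Kn(F)\colon\Kn(\per A)\to\Kn(\per B)$ of free abelian groups.

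By Lemma~\ref{lem:commutation}, $F\circ(-\lta\ar_A)\cong(-\lta\ar_B)\circ F$. Taking Grothendieck groups and using Definition~\ref{def:ARbimodule} gives
\[
  \Kn(F)\circ\Cox_A=\Cox_B\circ\Kn(F).
\]
So $\Cox_A$ and $\Cox_B$ are conjugate over $\ZZ$ and have equal characteristic polynomials in $\ZZ[x]$.

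In the write-up I would present this integral argument as the proof, since it is characteristic-free. I would then remark that Theorem~\ref{thm:matrix} shows the same conclusion is visible inside the enriched calculus: via $\ch\otimes\kk$, the conjugacy over $\ZZ$ becomes the conjugacy $\HH_0(X)\sigma_A=\sigma_B\HH_0(X)$ furnished by $\HB$. The only check needed is that $\Kn(F)$ is compatible with $\HB(X)_0$ under $\ch$, and this is exactly Proposition~\ref{prop:naturality}(2).
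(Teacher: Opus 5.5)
Your proof is correct. Its first half is exactly the paper's: the conjugacy $\HH_0(X)\circ\sigma_{\bullet,A}=\sigma_{\bullet,B}\circ\HH_0(X)$ gives equality of Coxeter polynomials in $\kk[x]$, which suffices when $\operatorname{char}\kk=0$.

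For the integral statement the two routes diverge. The paper stays inside the calculus. It claims that $\HB(X)_0$ restricts, by Proposition~\ref{prop:naturality}(2), to an isomorphism between the additive subgroups of $\HH_0(A)$ and $\HH_0(B)$ generated by the $\bar e_i$, intertwining the $\sigma_\bullet$'s. You instead work in $\Kn$ directly: the equivalence $F=-\lta X$ together with Lemma~\ref{lem:commutation} gives $\Kn(F)\circ\Cox_A=\Cox_B\circ\Kn(F)$, and you pass to $\HH_0$ via $\ch$ only afterwards.

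What your route buys is independence of the characteristic. The subgroup of $\HH_0(A)\cong\kk^n$ generated by the $\bar e_i$ is the image of $\ZZ^n\to\kk^n$. This is a free abelian group of rank $n$ only when $\operatorname{char}\kk=0$; in characteristic $p$ it is $\mathbb{F}_p^n$. So the paper's restriction argument yields conjugacy over $\ZZ$ only in characteristic zero, and otherwise merely conjugacy over $\mathbb{F}_p$, i.e.\ equality of Coxeter polynomials modulo $p$. Your $\Kn$-level argument is what actually delivers the statement in every characteristic.

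What the paper's route buys is that it exhibits the invariance as a consequence of the enriched calculus itself. Your integral argument is the classical one and uses $\HB$ only in the closing remark. There, as you say, Proposition~\ref{prop:naturality}(2) is precisely the compatibility of $\Kn(F)$ with $\HB(X)_0$ under $\ch$.
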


\begin{proof}
If $A$ and $B$ are derived equivalent, Theorem~\ref{thm:invariance}(3)
provides an isomorphism of pairs
$(\HH_\bullet(A),\sigma_\bullet)\cong(\HH_\bullet(B),\sigma_\bullet)$, so
the characteristic polynomials agree in $\kk[x]$; over a field of
characteristic zero this already returns equality in $\ZZ[x]$ by
integrality of both polynomials. In general, the matrices
$-C^{-1}C^{\mathrm{T}}$ are even conjugate over $\ZZ$, since the
isomorphism of Theorem~\ref{thm:invariance}(3) restricts, by
Proposition~\ref{prop:naturality}(2), to an isomorphism of the lattices
spanned by the $\bar e_i$ intertwining the $\sigma_\bullet$'s. This
recovers the classical statement of \cite[Prop.~2.3]{delaPena14},
\cite{LenzingdelaPena08}.
\end{proof}

\subsection{The motivating example, revisited}

\begin{example}\label{cor:example}
Let $\kk$ be an algebraically closed field, $A=\kk Q_A$ the path algebra of
the $\mathbb D_4$-quiver with three arrows into the central vertex, and
$B=\kk Q_B$ the path algebra of the linearly oriented $\mathbb A_4$-quiver,
as in \S\ref{sec:intro}. Then:
\begin{enumerate}
\item $\HB(A)\cong\HB(B)$ in $\TT$;
\item there exists no isomorphism $\HB(A)\to\HB(B)$ in $\TT$ commuting
with $\sigma_A$ and $\sigma_B$; in particular $(\HB(A),\sigma_A)\not\cong
(\HB(B),\sigma_B)$ although the underlying calculi are isomorphic.
\end{enumerate}
\end{example}

\begin{proof}
(1) Both quivers are trees, so by \cite{Cibils86,Cibils90} the Hochschild
homology and cohomology of $A$ and $B$ vanish in positive degrees, with
$\HH_0\cong\kk^4$ and $\HH^0\cong\kk$; all components of the calculus other
than the unit cup product and the degree-$(0,0)$ cap product vanish, and
the two calculi are isomorphic; this is the closing remark of
\cite{Armenta19}.

(2) Ordering the vertices as in \S\ref{sec:intro}, the Cartan matrices and
the matrices $S=-C^{-1}C^{\mathrm{T}}$ of the $\sigma_\bullet$'s are
\[
  C_A=\begin{pmatrix}1&0&0&0\\1&1&0&0\\1&0&1&0\\1&0&0&1\end{pmatrix},
  \qquad
  C_B=\begin{pmatrix}1&1&1&1\\0&1&1&1\\0&0&1&1\\0&0&0&1\end{pmatrix},
\]
with characteristic polynomials
\begin{align*}
  \det(x\cdot\id-\sigma_{A,\bullet})&=\coxpol_A(x)=x^4+x^3+x+1,\\
  \det(x\cdot\id-\sigma_{B,\bullet})&=\coxpol_B(x)=x^4+x^3+x^2+x+1
\end{align*}
by Theorem~\ref{thm:matrix}. An isomorphism of pairs would conjugate
$\sigma_{A,\bullet}$ into $\sigma_{B,\bullet}$ and force the equality of
their characteristic polynomials in $\kk[x]$; but
$\coxpol_B-\coxpol_A=x^2\neq 0$ in $\kk[x]$ for every field $\kk$.
\end{proof}

\begin{remark}\label{rem:strict}
Example~\ref{cor:example} shows that the enriched calculus
$(\HB(-),\sigma)$ is a strictly finer derived invariant than the
Tamarkin--Tsygan calculus $\HB(-)$. It is strictly finer than the Coxeter
polynomial as well: it determines $\coxpol$ by Theorem~\ref{thm:matrix},
while already the underlying calculus separates many algebras with equal
Coxeter polynomial --- by Happel's formula such algebras have equal Euler characteristic of Hochschild cohomology,
but any two of them with non-isomorphic graded algebras $\HH^\bullet$ are
separated by $\HB$; see the discussions in \cite{LenzingdelaPena08}.
\end{remark}

\begin{remark}\label{rem:classical}
The derived inequivalence of $\kk Q_{\mathbb A_4}$ and
$\kk Q_{\mathbb D_4}$ is itself classical and needs none of this: for a
Dynkin quiver the bounded derived category is described completely by
Happel \cite{Happel88,Happel91}, with Auslander--Reiten quiver
$\ZZ\Delta$, and two Dynkin path algebras are derived equivalent if and
only if their underlying diagrams agree --- here the numbers of
indecomposable objects, $10$ and $12$, already differ. The content of
Example~\ref{cor:example} is not the inequivalence of the two algebras
but the comparison of two invariants: the Tamarkin--Tsygan calculus does
not separate them, the enriched calculus does, and the gap between the
two is measured exactly by the Coxeter polynomial.
\end{remark}

\begin{remark}
The polynomials above factor as
$\coxpol_A=\Phi_2(x)^2\,\Phi_6(x)$ and $\coxpol_B=\Phi_5(x)$ in terms of
cyclotomic polynomials, reflecting the exponents $1,3,3,5$ and Coxeter
number $6$ of $\mathbb D_4$, and $1,2,3,4$ and Coxeter number $5$ of
$\mathbb A_4$. Amusingly, the same pair of algebras appears in
\cite[Ex.~3.4]{Ladkani21} as the minimal example showing that the Coxeter
polynomial of an insertion is not determined by the Coxeter polynomial of
the inserted poset.
\end{remark}

\begin{remark}[Beyond global dimension one]\label{rem:gldim}
A natural question is what happens in global dimension $\geq2$.
Theorems~\ref{thm:invariance} and~\ref{thm:identity} require only
$\gldim A<\infty$, and Theorem~\ref{thm:matrix} elementarity in addition;
nothing in the theory is special to the hereditary case. Moreover,
genuinely new spectra appear in higher global dimension, and the enriched
calculus detects this internally. By Happel's vanishing theorem
\cite{Happel89hh}, a connected hereditary algebra has $\HH^{i}=0$ for
$i\geq2$ and $\HH^0=\kk$, so Corollary~\ref{cor:happel} forces
\[
  -\tr\bigl(\sigma_\bullet\bigr)
  \;=\;\sum_{i\geq0}(-1)^i\dim_\kk\HH^i(A)
  \;=\;1-\dim_\kk\HH^1(A)\;\leq\;1
\]
for every connected algebra derived equivalent to a hereditary one. The
Beilinson algebra $B_2$ of the projective plane
(Example~\ref{ex:projspace}), which has global dimension two, has Coxeter
polynomial $(x+1)^3$ and hence $\tr(\sigma_\bullet)=-3$: the inequality
fails, so $B_2$ is not derived equivalent to any hereditary algebra, and
the enriched calculus itself witnesses the obstruction.
\end{remark}

\subsection{The enriched calculus is not a complete invariant}
\label{subsec:notcomplete}
The refinement of Example~\ref{cor:example} has a precise limit, which we
now exhibit. For path algebras of trees the entire calculus degenerates,
so the enriched calculus retains exactly one piece of information: the
conjugacy class of the Coxeter matrix.

\begin{example}\label{ex:notcomplete}
Let $\kk$ be a field of characteristic zero, let $T$ be the spider with
one arm of length three and four arms of length one, and let $T'$ be the
double star consisting of two adjacent vertices carrying three leaves
each --- the two trees on eight vertices pictured below, taken with any
orientations:
\[
  Q_T:\quad
  \begin{array}{ccccc}
  2 &  & 3 &  & \\[-2pt]
    & \nwarrow & \uparrow &  & \\[-2pt]
   &  & 1 & \longrightarrow & 6\longrightarrow 7\longrightarrow 8\\[-2pt]
    & \swarrow & \downarrow &  & \\[-2pt]
  4 &  & 5 &  &
  \end{array}
\]
\[
  Q_{T'}:\quad
  \begin{array}{ccccccc}
  3 &  & 4 &  & 6 &  & 7\\[-2pt]
    & \nwarrow & \uparrow &  & \uparrow & \nearrow & \\[-2pt]
   &  & 1 & \longrightarrow & 2 &  & \\[-2pt]
    & \swarrow &  &  &  & \searrow & \\[-2pt]
  5 &  &  &  &  &  & 8
  \end{array}
\]
Then $(\HB(\kk T),\sigma)\cong(\HB(\kk T'),\sigma')$ as calculi with
automorphism, but $\kk T$ and $\kk T'$ are not derived equivalent. Hence
the enriched calculus is not a complete derived invariant.
\end{example}

\begin{proof}
A direct computation with Lemma~\ref{lem:coxmatrix} --- independent of
the chosen orientations, since all orientations of a tree are iterated
reflections of one another \cite{Happel88} and
Theorem~\ref{thm:invariance} applies --- gives
\[
  \coxpol_T(x)\;=\;\coxpol_{T'}(x)
  \;=\;(x+1)^4\,\bigl(x^4-3x^3+x^2-3x+1\bigr),
\]
and moreover $\operatorname{rank}(\Cox+\id)=4$ for both Coxeter
matrices. The latter says that both matrices are semisimple at the
eigenvalue $-1$, and the quartic factor is irreducible over $\QQ$;
consequently both matrices have elementary divisors
$(x+1),(x+1),(x+1),(x+1)$ and $x^4-3x^3+x^2-3x+1$, hence are conjugate
over $\QQ$ and therefore over $\kk$.

Since $T$ and $T'$ are trees, Hochschild homology and cohomology vanish
in positive degrees, with $\HH_0\cong\kk^8$ and $\HH^0=\kk$, see
\cite{Cibils86,Cibils90,Happel89hh}; every operation of the calculus
other than the unit constraints vanishes, and $\sigma^\bullet=\id$ on
both sides by Theorem~\ref{thm:identity}. An isomorphism of enriched
calculi is therefore exactly a linear isomorphism
$\HH_0(\kk T)\to\HH_0(\kk T')$ conjugating $\sigma_\bullet$ into
$\sigma'_\bullet$, together with the identity of $\kk$ in cohomology;
by Theorem~\ref{thm:matrix} and the conjugacy just established, such an
isomorphism exists.

Finally, derived equivalent connected hereditary algebras have
isomorphic underlying graphs: the Auslander--Reiten quiver of
$\Db(\kk Q)$ contains a unique component of the form $\ZZ\Delta$ with
$\Delta$ a finite quiver, namely the transjective component, and the
underlying graph of $\Delta$ is that of $Q$ \cite{Happel88,Happel91}.
The trees $T$ and $T'$ are not isomorphic --- their degree sequences are
$(5,2,2,1,1,1,1,1)$ and $(4,4,1,1,1,1,1,1)$ --- so $\kk T$ and $\kk T'$
are not derived equivalent.
\end{proof}

\begin{remark}\label{rem:cospectral}
The pair above is the smallest one: a machine search over all trees with
at most eight vertices shows that $T$ and $T'$ form the unique pair of
non-isomorphic trees on $\leq8$ vertices with equal Coxeter polynomial.
This is no accident: for a tree, the Coxeter spectrum and the spectrum
of the adjacency matrix determine one another, so Coxeter-cospectral
trees are exactly cospectral trees, and $T$, $T'$ are the classical
smallest cospectral pair; by a theorem of Schwenk \cite{Schwenk73},
almost every tree admits a cospectral mate, so such pairs abound. The
failure of completeness thus occurs precisely where the underlying
calculus is most degenerate, and it delimits what the enrichment can
see: on trees, $(\HB,\sigma)$ is exactly the conjugacy class of the
Coxeter transformation over $\kk$. Whether the integral refinements of
Question~\ref{q:lattice} --- the lattice $\ch(\Kn(\per A))$ and the
Shklyarov pairing --- or the twisted theory of Question~\ref{q:twisted}
separate such pairs is, in our view, the right next question.
\end{remark}

\subsection{Happel's trace formula and fractional Calabi--Yau
algebras}

\begin{corollary}\label{cor:happel}
Let $A$ be elementary with $\gldim A<\infty$ over an algebraically closed
field $\kk$. Then for all $m\geq 1$
\[
  \tr\bigl(\sigma_\bullet^{\,m}\,\big|\,\HH_\bullet(A)\bigr)
  =\tr\bigl(\Cox_A^{\,m}\bigr),
\]
and for $m=1$, combining with Happel's theorem \cite{Happel97},
\[
  \sum_{i\geq 0}(-1)^i\dim_\kk\HH^i(A)
  \;=\;-\,\tr\bigl(\sigma_\bullet\,\big|\,\HH_\bullet(A)\bigr).
\]
If moreover $\operatorname{char}\kk=0$, the Coxeter polynomial
$\coxpol_A\in\ZZ[x]$ is determined by the pair $(\HB(A),\sigma_A)$ through
the traces of the powers of $\sigma_\bullet$ and Newton's identities.
\end{corollary}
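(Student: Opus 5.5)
The plan is to deduce all three assertions from Theorem~\ref{thm:matrix}, since that theorem already identifies $\sigma_\bullet$ with $\Cox_A\otimes\kk$ on the nose.

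\emph{Traces of powers.} By Proposition~\ref{prop:concentration}, $\HH_\bullet(A)=\HH_0(A)$. By Theorem~\ref{thm:matrix}, the isomorphism $\ch\otimes\kk$ carries $\Cox_A\otimes\kk$ to $\sigma_\bullet$. Conjugate maps have conjugate powers, so $\sigma_\bullet^m$ is conjugate to $(\Cox_A\otimes\kk)^m=\Cox_A^m\otimes\kk$. Hence $\tr(\sigma_\bullet^m)$ equals the image in $\kk$ of the integer $\tr(\Cox_A^m)$. Concretely, in the basis $\bar e_i$ the matrix of $\sigma_\bullet^m$ is $(-C_A^{-1}C_A^{\mathrm T})^m$, an integer matrix, so the trace identity holds in $\kk$. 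Functoriality (Theorem~\ref{thm:invariance}(2)) shows that $\sigma_\bullet^m=\HB(\ar_A^{\lten_A m})_\bullet$, so it does not matter which description of the $m$-th power we use.

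\emph{Happel's formula.} For $m=1$, I would invoke Happel's theorem \cite{Happel97}: for $A$ elementary over an algebraically closed field with $\gldim A<\infty$, $\sum_i(-1)^i\dim\HH^i(A)=-\tr(\Cox_A)$. This needs only that $\HH^\bullet(A)$ is finite-dimensional, which holds since $A^e$ has finite global dimension when $\kk$ is perfect. Substituting the trace identity gives the displayed formula. There is a subtlety: the left side is an integer, while the trace is computed in $\kk$. If $\operatorname{char}\kk=p$, the formula should be read modulo $p$, or equivalently as the integral identity $\sum(-1)^i\dim\HH^i=-\tr(\Cox_A)$ combined with the integral lattice statement from Corollary~\ref{cor:coxinv}. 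I expect this to be the only delicate point. I would state it as an equality in $\kk$, noting that it is an integer identity whenever $\operatorname{char}\kk=0$.

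\emph{Newton's identities.} In characteristic zero, the power sums $p_m=\tr(\sigma_\bullet^m)$ for $1\le m\le n=\dim\HH_0(A)$ determine the elementary symmetric functions of the eigenvalues, by Newton's identities $m\,e_m=\sum_{j=1}^m(-1)^{j-1}e_{m-j}p_j$. Dividing by $m$ is legitimate in characteristic zero. These power sums are intrinsic to the pair $(\HB(A),\sigma_A)$: they are the traces of powers of the automorphism on the homological half, and they are invariant under any isomorphism of pairs. Hence $\det(x\cdot\id-\sigma_\bullet)$ is determined, and by Theorem~\ref{thm:matrix} it equals $\coxpol_A$, which lies in $\ZZ[x]\subset\kk[x]$ because $\QQ\subseteq\kk$.
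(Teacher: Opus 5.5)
Your proposal is correct and is essentially the paper's proof. Concentration of $\HH_\bullet(A)$ in degree zero (Proposition~\ref{prop:concentration}) together with the conjugacy of Theorem~\ref{thm:matrix} gives equality of all power traces, Happel's theorem supplies the case $m=1$, and Newton's identities recover $\coxpol_A$ once $\QQ\subseteq\kk$; your remark that in positive characteristic the second display must be read in $\kk$ is a precision the paper leaves implicit, though the suggested ``integral lattice'' alternative is not available there, since the image of $\ch$ is then only an $\mathbb{F}_p$-span inside $\HH_0(A)$.
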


\begin{proof}
Immediate from Theorem~\ref{thm:matrix}, since $\HH_\bullet(A)$ is
concentrated in (even) degree zero and conjugate matrices have equal power
traces; Happel's theorem states
\[
  \textstyle\sum_i(-1)^i\dim\HH^i(A)=-\tr\Cox_A
\]
in the normalization of Lemma~\ref{lem:coxmatrix}, cf.\
\cite[Cor.~1]{Han20}.
\end{proof}

Following \cite{HerschendIyama11}, call $A$ \emph{fractionally Calabi--Yau
of dimension $p/q$} if
\[
  (\D A)^{\lten_A q}\;\cong\;A[p]
  \qquad\text{in }\Db(A^e)
\]
for some integers $p$ and $q\geq 1$.

\begin{corollary}\label{cor:fcy}
If $A$ is elementary of finite global dimension and fractionally
Calabi--Yau of dimension $p/q$, then
\[
  \sigma_\bullet^{\,q}\;=\;(-1)^{p-q}\,\id_{\HH_\bullet(A)} .
\]
Consequently $\Cox_A^{\,q}=(-1)^{p-q}\id$, the Coxeter transformation is
periodic, and $\coxpol_A$ is a product of cyclotomic polynomials. The same
conclusion on quasi-unipotence holds for the Serre cyclotomic algebras of
\cite{Pfeifer25}.
\end{corollary}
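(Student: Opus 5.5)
The plan is to compute $\ar_A^{\lten_A q}$ in $\Ak$ and then apply the functor $\HB$. Since $\ar_A=\D A[-1]$ and the shift commutes with derived tensor products of bimodule complexes (with $M[a]\lta N[b]\cong(M\lta N)[a+b]$ in $\Db(A^e)$), we get
\[
  \ar_A^{\lten_A q}\;\cong\;(\D A)^{\lten_A q}[-q]\;\cong\;A[p-q]
  \qquad\text{in }\Db(A^e),
\]
using the fractionally Calabi--Yau hypothesis. Morphisms of $\Ak$ are identified up to isomorphism in the derived category of bimodules, so $\ar_A^{\lten_A q}=A[p-q]$ in $\Ak$. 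By Theorem~\ref{thm:invariance}(2), $\sigma_A^{\,q}=\HB(A[p-q])$.

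The main point, and the only step requiring care, is to identify $\HB(A[m])$ on $\HH_\bullet(A)$ as $(-1)^m\id$. The cleanest route avoids sign conventions in \cite{Armenta19} entirely, using the elementary hypothesis. By Proposition~\ref{prop:concentration}, $\HH_\bullet(A)=\HH_0(A)$ has basis $\bar e_i=\ch(P_i)$. By Proposition~\ref{prop:naturality}(2) applied to $X=A[m]$ (a bounded right-projective representative, namely $A$ placed in degree $-m$), we get
\[
  \HB(A[m])_0(\bar e_i)=\ch\bigl(P_i\lta A[m]\bigr)=\ch(P_i[m])=(-1)^m\bar e_i,
\]
using $\ch(M[1])=-\ch(M)$. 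Hence $\sigma_\bullet^{\,q}=(-1)^{p-q}\id_{\HH_\bullet(A)}$. One could alternatively read this off from Proposition~\ref{prop:naturality}(1), since the tensor-trace of $\lambda_a$ on the single component $A$ in degree $-m$ is $(-1)^m\bar a$.

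For the consequences, Theorem~\ref{thm:matrix} says that the matrix of $\sigma_\bullet$ in the basis $\bar e_i$ equals the integer matrix $-C_A^{-1}C_A^{\mathrm T}$ of $\Cox_A$. However, it does so only after reduction to $\kk$, so to get $\Cox_A^{\,q}=(-1)^{p-q}\id$ over $\ZZ$ I would argue directly at the level of $\Kn$ rather than through $\kk$. Since $\Cox_A=\Kn(-\lta\ar_A)$ and $\Kn$ is functorial in the standard equivalence, $\Cox_A^{\,q}=\Kn(-\lta A[p-q])=(-1)^{p-q}\id$. The computation in $\HH_0$ is then its image under $\ch\otimes\kk$. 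Periodicity follows at once, since $\Cox_A^{2q}=\id$.

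Every eigenvalue of $\Cox_A$ is therefore a root of unity, so $\coxpol_A$ is a monic integer polynomial all of whose roots are roots of unity. It is therefore a product of cyclotomic polynomials, either by Kronecker's theorem or simply because $\coxpol_A$ divides a power of $x^{2q}-1$ over $\QQ$ and has integer coefficients by Gauss's lemma. For the closing sentence about Serre cyclotomic algebras, I would just invoke their defining property from \cite{Pfeifer25}, that some power of the Serre bimodule is unipotent up to shift on $\Kn$, and run the same argument to get quasi-unipotence; this is a citation rather than a proof.
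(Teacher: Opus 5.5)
Your computation of $\sigma_\bullet^{\,q}$ is the paper's own: $\ar_A^{\lten_A q}\cong(\D A)^{\lten_A q}[-q]\cong A[p-q]$, then Theorem~\ref{thm:invariance}(2), then Proposition~\ref{prop:naturality}(2) applied to $A[m]$ to get the factor $(-1)^m$ on each $\bar e_i$.

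You depart from the paper in deducing the statements about $\Cox_A$.

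\textbf{The paper's route.} It transfers the identity back through the commuting square of Theorem~\ref{thm:matrix}. That only gives $\Cox_A^{\,q}\otimes\kk=(-1)^{p-q}\id$ on $\Kn(\per A)\otimes_\ZZ\kk$. This recovers the integral identity when $\operatorname{char}\kk=0$, because $\Kn(\per A)\to\Kn(\per A)\otimes_\ZZ\kk$ is then injective. In positive characteristic it yields only a congruence of integer matrices modulo the characteristic, which is not enough for periodicity or cyclotomicity over $\ZZ$.

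\textbf{Your route.} Your direct argument,
\[
  \Cox_A^{\,q}=\Kn\bigl((-\lta\ar_A)^{q}\bigr)=\Kn(-\lta A[p-q])=(-1)^{p-q}\id,
\]
is characteristic-free and closes that gap, so you were right to flag the issue.

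\textbf{What each buys.} Your argument shows that the assertion about $\Cox_A$ does not really pass through the calculus: the identity for $\Cox_A$ and the identity for $\sigma_\bullet$ are parallel images of the single bimodule isomorphism $\ar_A^{\lten_A q}\cong A[p-q]$, one under $\Kn$ and one under $\HB$. The paper's route is chosen to present the result as ``internal to the enriched calculus''. That framing is legitimate only in characteristic zero.

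\textbf{Remaining points.} Your elementary alternative to Kronecker's theorem is fine: $\coxpol_A$ divides a power of $x^{2q}-1$, and unique factorization in $\QQ[x]$ does the rest. For the sentence on Serre cyclotomic algebras, the paper gives no argument either, so treating it as a citation matches the source. Your paraphrase of Pfeifer's definition, however, is a guess you would need to check against \cite{Pfeifer25}.
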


\begin{proof}
$\ar_A^{\lten_A q}=(\D A)^{\lten_A q}[-q]\cong A[p-q]$, so
$\sigma^q=\HB(A[p-q])$, and by Proposition~\ref{prop:naturality}(2),
$\HB(A[m])_0(\bar e_i)=\ch(P_i[m])=(-1)^m\bar e_i$. The statements about
$\Cox_A$ follow through the commuting square of Theorem~\ref{thm:matrix},
and cyclotomicity by Kronecker's theorem; this recovers part (a) of
\cite[Thm.~A]{delaPena14}.
\end{proof}

\section{The cohomological component is the identity}\label{sec:identity}

Theorem~\ref{thm:matrix} computes the homological component of $\sigma_A$
and shows that it carries the full Coxeter data. In this section we prove
that the cohomological component carries none: $\sigma^\bullet_A$ is the
identity. Far from being a defect, this dichotomy is the structural
explanation of the principle visible throughout the paper, that Coxeter
data is \emph{homological}. The key lemma holds for every
finite-dimensional algebra, with no hypothesis on the global dimension,
and has a consequence outside the scope of the rest of the paper: it
recovers, by a short conceptual argument, the recent theorem of
Su\'arez-\'Alvarez \cite{SuarezAlvarez25} that the Nakayama automorphism
of a Frobenius algebra acts trivially on Hochschild cohomology.

\subsection{Two actions and a conjugation}
Let $A$ be a finite-dimensional $\kk$-algebra. For $M\in\Db(A^e)$ and a
class $\eta\in\HH^m(A)=\Hom_{\Db(A^e)}(A,A[m])$ define
\[
  \ell^M_\eta\colon
  M\xrightarrow{\ \cong\ }A\lta M
  \xrightarrow{\ \eta\,\lten_A\,\id_M\ }A[m]\lta M
  \xrightarrow{\ \cong\ }M[m],
\]
\[
  r^M_\eta\colon
  M\xrightarrow{\ \cong\ }M\lta A
  \xrightarrow{\ \id_M\lten_A\,\eta\ }M\lta A[m]
  \xrightarrow{\ \cong\ }M[m],
\]
where the unlabelled arrows are the canonical unit isomorphisms. Both are
natural in $M$; both commute with shifts, since $-\lta M$ and $M\lta-$
are triangulated functors, so that $\ell^{M[1]}_\eta=\Sigma\ell^{M}_\eta$
and $r^{M[1]}_\eta=\Sigma r^{M}_\eta$; and on the regular bimodule both
reduce to $\eta$ itself:
\begin{equation}\label{eq:regularactions}
  \ell^{A}_\eta\;=\;r^{A}_\eta\;=\;\eta .
\end{equation}
If $X\in\Db(A^e)$ is invertible, the map $\theta\mapsto r^X_\theta$ is
injective, since applying the equivalence $X^{-1}\lta(-)$ recovers the
action of $\theta$ on $A$; the \emph{conjugation by $X$} is the graded
linear automorphism $\gamma_X$ of $\HH^\bullet(A)$ characterized by
\begin{equation}\label{eq:conjchar}
  r^{X}_{\gamma_X(\eta)}\;=\;\ell^{X}_{\eta}
  \qquad\text{in } \Hom_{\Db(A^e)}(X,X[m]).
\end{equation}
One has $\gamma_{X\lten_A Y}=\gamma_Y\circ\gamma_X$ (both sides act
through the middle factor of $X\lten_A Y$), and $\gamma_{A[\pm1]}=\id$ by
the shift-compatibility just noted. The cohomological component of the
transport $\HB(X)$ along an invertible bimodule is exactly this
conjugation (Theorem~\ref{thm:thesis}; \cite{Rickard89a,Keller04}), so
that
\begin{equation}\label{eq:sigmaisconj}
  \sigma^\bullet_A
  \;=\;\gamma_{\ar_A}
  \;=\;\gamma_{A[-1]}\circ\gamma_{\D A}
  \;=\;\gamma_{\D A}.
\end{equation}

\subsection{The two actions agree on the dual bimodule}

\begin{lemma}\label{lem:twoactions}
Let $A$ be any finite-dimensional $\kk$-algebra. For every $m\in\ZZ$ and
every $\eta\in\Hom_{\Db(A^e)}(A,A[m])$,
\[
  \ell^{\D A}_{\eta}\;=\;r^{\D A}_{\eta}
  \qquad\text{in }\Hom_{\Db(A^e)}\bigl(\D A,\D A[m]\bigr).
\]
\end{lemma}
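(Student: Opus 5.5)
The plan is to use $\kk$-duality to turn the left action on $\D A$ into the right action on $A$ and vice versa, and then conclude from \eqref{eq:regularactions}. The functor $\D=\Hom_\kk(-,\kk)$ gives a contravariant triangulated equivalence $\Db(A^e)^{\mathrm{op}}\supseteq\Db^b(\rmod A^e)\to\Db^b(\rmod A^e)$ on complexes with finite-dimensional total cohomology. It exchanges the left and right $A$-structures in the following sense: a left $A^e$-module $M$ has dual $\D M$ with $(a\varphi b)(m)=\varphi(bma)$. The key input is the natural isomorphism of Step~2 in the proof of Lemma~\ref{lem:invertible}, in its two-sided form
\[
  \D\bigl(M\lta N\bigr)\;\cong\;\RHom_A(M,\D N)\;\cong\;\RHom_{A^{\mathrm{op}}}(N,\D M),
\]
natural in both variables and compatible with the outer bimodule structures.

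First step: I show that for $M\in\Db^b(\rmod A^e)$ one has $\D(r^M_\eta)=\ell^{\D M}_\eta$ up to the sign and shift conventions $\D(M[m])\cong(\D M)[-m]$, read as morphisms $\D M[-m]\to\D M$, equivalently $\D M\to\D M[m]$ after shifting. The same argument gives $\D(\ell^M_\eta)=r^{\D M}_\eta$. To prove this, I apply $\D$ to the composite $M\cong M\lta A\xrightarrow{\id\lten\eta}M\lta A[m]$. Under the adjunction, $\D(M\lta A)\cong\RHom_{A^{\mathrm{op}}}(A,\D M)\cong \D M$. Naturality in the variable $A$ identifies $\D(\id_M\lten\eta)$ with precomposition by $\eta$ in $\RHom_{A^{\mathrm{op}}}(-,\D M)$. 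The latter is the left action $\eta\lten\id_{\D M}$ transported through $\RHom_{A^{\mathrm{op}}}(A,\D M)\cong A\lta \D M$, by the Step~3 isomorphism $\RHom_{A^{\mathrm{op}}}(P,A)\lta N\cong\RHom_{A^{\mathrm{op}}}(P,N)$ with $P=A$, which is natural in $P$.

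Second step: I take $M=A$. By \eqref{eq:regularactions}, $\ell^A_\eta=r^A_\eta=\eta$, so
\[
  \ell^{\D A}_\eta=\D(r^A_\eta)=\D(\eta)=\D(\ell^A_\eta)=r^{\D A}_\eta ,
\]
up to the same shift identification applied on both sides, which therefore cancels. No finiteness of global dimension is used, only $\dim_\kk A<\infty$, so that $A$ and $\D A$ lie in $\Db^b(\rmod A^e)$ and $\D$ is an involutive equivalence there.

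The main obstacle is the first step. There I must make precise that naturality of the duality isomorphism in the \emph{derived} second variable, applied to a morphism $\eta\colon A\to A[m]$ in $\Db(A^e)$ that need not come from a chain map of bimodules without resolving, really produces $\eta\lten\id$ on the other side. I would handle this by representing $\eta$ as a roof through an h-projective bimodule resolution $pA\to A$ (e.g.\ the bar resolution). I would then check both unit isomorphisms and the adjunction at the chain level on $M\otimes_A pA$ and $pA\otimes_A \D M$, where everything is an honest natural isomorphism of complexes of bimodules. I would also track the Koszul sign from $\D(-[m])$, which appears identically for $\ell$ and $r$ and hence does not affect the equality.
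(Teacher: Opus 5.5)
Your proposal is essentially the paper's argument: $\kk$-duality turns the left action on $\D A$ into the dual of the right action on $A$ and vice versa, and one concludes from $\ell^A_\eta=r^A_\eta=\eta$. The paper packages the duality through naturality of the evaluation maps $\alpha_X,\beta_X$ at $\eta$ rather than through the tensor--hom adjunction in the second variable, which is only a difference of bookkeeping.

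One step is left implicit, and it is exactly the one the paper spells out. Naturality of the Step~3 isomorphism in $P$ only turns precomposition with $\eta$ into $\RHom_{A^{\mathrm{op}}}(\eta,A)\lten\id_{\D M}$, not $\eta\lten\id_{\D M}$. You still need that, under $\RHom_{A^{\mathrm{op}}}(A,A)\cong A$, the map $\RHom_{A^{\mathrm{op}}}(\eta,A)$ is $\eta[-m]$ itself, with the same sign as its right-handed mirror. The paper gets this from the observation that a left-linear $g$ satisfies $g(a)=a\,g(1)$, and your planned chain-level check on the bar resolution must supply it.
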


\begin{proof}
Recall from the proof of Lemma~\ref{lem:commutation}, applied with $B=A$,
the morphisms
\[
  \alpha_X\colon X\lta\D A\longrightarrow\D\bigl(\RHom_A(X,A)\bigr),
\]
\[
  \beta_X\colon \D A\lta X\longrightarrow
  \D\bigl(\RHom_{A^{\mathrm{op}}}(_AX,_AA)\bigr),
\]
defined by the evaluation formulas given there for arbitrary
$X\in\Db(A^e)$; they are natural in $X$, and invertible whenever $X$ is
perfect as a right (for $\alpha$), respectively left (for $\beta$),
$A$-module, by the d\'evissage argument of that proof. We evaluate the
two naturality squares at the morphism $\eta\colon A\to A[m]$ of
$\Db(A^e)$, whose source and target are perfect on both sides:
\begin{align}
  \alpha_{A[m]}\circ\bigl(\eta\lten_A\id_{\D A}\bigr)
  &=\D\bigl(\eta^{*}_{r}\bigr)\circ\alpha_{A},
  \label{eq:alphanat}\\
  \beta_{A[m]}\circ\bigl(\id_{\D A}\lten_A\eta\bigr)
  &=\D\bigl(\eta^{*}_{\ell}\bigr)\circ\beta_{A},
  \label{eq:betanat}
\end{align}
where $\eta^{*}_{r}=\RHom_A(\eta,A)$ and
$\eta^{*}_{\ell}=\RHom_{A^{\mathrm{op}}}(\eta,A)$ denote precomposition
with $\eta$. Identify
\begin{gather*}
  \RHom_A(A,A)\cong A\cong\RHom_{A^{\mathrm{op}}}(A,A),\\
  \RHom_A(A[m],A)\cong A[-m]\cong\RHom_{A^{\mathrm{op}}}(A[m],A)
\end{gather*}
by evaluation at the unit, computed on projective bimodule
representatives. Under these identifications, first, $\alpha_A$,
$\beta_A$, $\alpha_{A[m]}$ and $\beta_{A[m]}$ all become identity maps:
for $\alpha_A$ this is the direct computation
$\varphi\mapsto(f\mapsto\varphi(f(1)))\mapsto
\bigl(b\mapsto\varphi(\lambda_b(1))\bigr)=\varphi$, where
$\lambda_b(x)=bx$ inverts the evaluation $f\mapsto f(1)$ on right-module
endomorphisms, and the other three cases follow by the mirror computation
and shift-compatibility. Second, precomposition with $\eta$ becomes the
morphism $\eta$ itself in both cases: a right-$A$-linear map $f$
satisfies $f(a)=f(1)\,a$, so on representatives
$(f\circ\eta)(\xi)=f(1)\,\eta(\xi)$ and $\eta^{*}_{r}$ corresponds to
right multiplication by $\eta$, that is to
$r^{A}_{\eta}[-m]=\eta[-m]$ by \eqref{eq:regularactions}; a
left-$A$-linear map $g$ satisfies $g(a)=a\,g(1)$, so
$(g\circ\eta)(\xi)=\eta(\xi)\,g(1)$ and $\eta^{*}_{\ell}$ corresponds to
left multiplication by $\eta$, that is to
$\ell^{A}_{\eta}[-m]=\eta[-m]$. Hence \eqref{eq:alphanat} and
\eqref{eq:betanat} identify $\ell^{\D A}_{\eta}$ and $r^{\D A}_{\eta}$,
respectively, with one and the same morphism, namely the $\kk$-dual
$\D(\eta[-m])\colon\D A\to\D A[-m]$ of $\eta$, transported through the
identifications above. The claim follows.
\end{proof}

\begin{theorem}\label{thm:identity}
Let $A$ be a finite-dimensional $\kk$-algebra with $\gldim A<\infty$.
Then
\[
  \sigma^\bullet_A\;=\;\id_{\HH^\bullet(A)} .
\]
Consequently the enriched calculus $(\HB(A),\sigma_A)$ amounts to the
Tamarkin--Tsygan calculus together with the single operator
$\sigma_\bullet$ on Hochschild homology, which commutes with the Connes
differential and with every cap operator:
\[
  \sigma_\bullet(z\cap\eta)\;=\;\sigma_\bullet(z)\cap\eta,
  \qquad z\in\HH_\bullet(A),\ \eta\in\HH^\bullet(A).
\]
\end{theorem}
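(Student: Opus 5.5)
The plan is to combine the conjugation formalism of the preceding subsection with Lemma~\ref{lem:twoactions}. By \eqref{eq:sigmaisconj}, $\sigma^\bullet_A=\gamma_{\D A}$. By Lemma~\ref{lem:invertible}, $\D A$ is invertible in $\Db(A^e)$, so $\gamma_{\D A}$ is defined by \eqref{eq:conjchar}, namely by
\[
  r^{\D A}_{\gamma_{\D A}(\eta)}=\ell^{\D A}_\eta .
\]
Lemma~\ref{lem:twoactions} gives $\ell^{\D A}_\eta=r^{\D A}_\eta$, so
\[
  r^{\D A}_{\gamma_{\D A}(\eta)}=r^{\D A}_\eta .
\]
The injectivity of $\theta\mapsto r^{X}_\theta$ for invertible $X$ then yields $\gamma_{\D A}(\eta)=\eta$ for every homogeneous $\eta$, hence $\sigma^\bullet_A=\id$.

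To make the chain \eqref{eq:sigmaisconj} airtight, I will check two points. First, $\gamma_{A[-1]}=\id$. This holds because $\ell^{A[-1]}_\eta$ and $r^{A[-1]}_\eta$ are both shifts of $\eta$, by the shift-compatibility and \eqref{eq:regularactions}; the Koszul sign conventions must be applied the same way on both sides so that no sign $(-1)^m$ survives. Second, the multiplicativity $\gamma_{X\lten_A Y}=\gamma_Y\circ\gamma_X$. I would verify it by noting that $\ell^{X\lten Y}_\eta=\ell^X_\eta\lten\id_Y$, that $r^{X}_{\theta}\lten\id_Y=\id_X\lten\ell^{Y}_{\theta}$ by associativity, and that $r^{X\lten Y}_\theta=\id_X\lten r^Y_\theta$. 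Since the paper already records both facts, I will invoke them and only indicate the sign check.

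For the consequences, Theorem~\ref{thm:invariance}(1) gives the projection formula $\sigma_\bullet(z\cap\eta)=\sigma_\bullet z\cap\sigma^\bullet\eta$. Substituting $\sigma^\bullet=\id$ yields the displayed identity. Commutation with $B$ is already part of Theorem~\ref{thm:invariance}(1). So the enriched datum reduces to the calculus together with $\sigma_\bullet$.

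The main obstacle I expect is the identification, asserted in the paragraph before \eqref{eq:sigmaisconj}, of the cohomological component $\HH^\bullet(X)$ of \cite{Armenta19} with the conjugation $\gamma_X$ rather than with its inverse or a sign-twisted variant. I would settle this from the description in Theorem~\ref{thm:thesis}: $\HH^\bullet(X)$ is the Rickard/Keller transport through the isomorphisms $\HH^\bullet(A)\cong\Hom(X,X[\bullet])\cong\HH^\bullet(B)$, given by the left and right actions respectively. Fortunately, the argument is robust to this ambiguity. If the component were $\gamma_X^{-1}$, or involved a sign $(-1)^{m\cdot\mathrm{shift}}$ coming from $A[-1]$, the conclusion would still follow, because $\gamma_{\D A}=\id$ and an identity inverts to the identity. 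The only remaining sign issue is the shift $[-1]$, and it is absorbed by $\gamma_{A[-1]}=\id$ provided the left and right actions on $A[-1]$ are computed with the same convention.
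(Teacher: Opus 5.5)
Your argument is the paper's proof: $\sigma^\bullet_A=\gamma_{\D A}$ by \eqref{eq:sigmaisconj}; Lemma~\ref{lem:twoactions} shows that $\theta=\eta$ solves \eqref{eq:conjchar} for $X=\D A$; uniqueness, via injectivity of $\theta\mapsto r^{\D A}_\theta$, gives $\gamma_{\D A}=\id$; and the cap formula is Theorem~\ref{thm:invariance}(1) with $\sigma^\bullet=\id$. One correction to your robustness remark: replacing $\gamma_X$ by $\gamma_X^{-1}$ is indeed harmless, but a sign $(-1)^m$ coming from the shift would \emph{not} be, since it would give $\sigma^\bullet=(-1)^m$ on $\HH^m(A)$, so the conclusion genuinely depends on $\gamma_{A[-1]}=\id$ (as your last sentence says, and as the paper cross-checks via the Gerstenhaber bracket in Remark~\ref{rem:signcheck}).
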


\begin{proof}
By \eqref{eq:sigmaisconj} we have $\sigma^\bullet_A=\gamma_{\D A}$, and
by \eqref{eq:conjchar} the class $\gamma_{\D A}(\eta)$ is the unique
$\theta$ with $r^{\D A}_{\theta}=\ell^{\D A}_{\eta}$.
Lemma~\ref{lem:twoactions} says that $\theta=\eta$ solves this equation.
The displayed projection formula is Theorem~\ref{thm:invariance}(1)
combined with $\sigma^\bullet=\id$.
\end{proof}

\begin{corollary}\label{cor:nakayama}
Let $A$ be a finite-dimensional Frobenius algebra over $\kk$, with
Nakayama automorphism $\nu$. Then the automorphism of $\HH^\bullet(A)$
induced by $\nu$ is the identity.
\end{corollary}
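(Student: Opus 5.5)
The plan is to identify $\D A$ with a twisted bimodule and then read off Lemma~\ref{lem:twoactions} as a statement about twisted actions. Since $A$ is Frobenius, there is a bimodule isomorphism $\D A\cong {}_1A_\nu$. Here ${}_1A_\nu$ denotes $A$ with left action the usual multiplication and right action twisted by $\nu$, that is $x\cdot a=x\,\nu(a)$. Because $\nu$ is an algebra automorphism, the twisted bimodule ${}_1A_\nu$ is invertible in $\Db(A^e)$, with inverse ${}_1A_{\nu^{-1}}$. No assumption on global dimension is needed for this. The automorphism of $\HH^\bullet(A)$ induced by $\nu$ is the usual transport: functoriality of Hochschild cohomology under algebra isomorphisms, $\eta\mapsto\nu_*\eta$. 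Concretely, it is obtained by twisting a representing morphism $A\to A[m]$ by $\nu$ on both sides.

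The first step is to show that this transport coincides with the conjugation $\gamma_{{}_1A_\nu}$ of \eqref{eq:conjchar}, up to replacing $\nu$ by $\nu^{-1}$, which is harmless for proving triviality. I would check this on cochains or on a projective bimodule resolution $P\to A$. Twisting the right action by $\nu$ sends $P$ to $P\otimes_A{}_1A_\nu={}_1P_\nu$, which resolves ${}_1A_\nu$. Under this identification, $\id\lten_A\eta$ on $A\lta{}_1A_\nu$ and $\eta\lten_A\id$ on ${}_1A_\nu\lta A$ correspond to $\eta$ and to its $\nu$-twist respectively. This gives $\ell^{{}_1A_\nu}_{\nu_*\eta}=r^{{}_1A_\nu}_{\eta}$, possibly with $\nu$ replaced by $\nu^{-1}$ depending on conventions. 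The identification is standard; it is the content of Rickard/Keller transport along the invertible bimodule ${}_1A_\nu$, cf.\ Theorem~\ref{thm:thesis}. I expect this bookkeeping step to be the main obstacle, specifically fixing which side carries the twist and making sure the identification is natural, so that it agrees with the induced automorphism as usually defined.

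With that in place, the argument is short. Lemma~\ref{lem:twoactions} holds for every finite-dimensional algebra, and by naturality of $\ell$ and $r$ in $M$ it transfers along the isomorphism $\D A\cong{}_1A_\nu$. It therefore gives $\ell^{{}_1A_\nu}_\eta=r^{{}_1A_\nu}_\eta$ for every $\eta$. Since ${}_1A_\nu$ is invertible, the map $\theta\mapsto r^{{}_1A_\nu}_\theta$ is injective. Comparing with the previous step then forces $\nu_*\eta=\eta$ for all $\eta$, so the induced automorphism is the identity.
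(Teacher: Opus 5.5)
Your proposal is correct and follows the paper's proof essentially step for step. You identify $\D A\cong{}_1A_\nu$ as an invertible bimodule, recognize conjugation by it as the transport along $\nu$ or $\nu^{-1}$, and conclude from Lemma~\ref{lem:twoactions}, transferred along the isomorphism by naturality, together with the injectivity of $\theta\mapsto r^{X}_\theta$. The side-of-the-twist bookkeeping you flag is the point the paper dismisses as immaterial, since triviality along $\nu$ is equivalent to triviality along $\nu^{-1}$. In writing that step out, note that you swapped the factors: $\ell^X_\eta$ is $\eta\lten_A\id_X$ on $A\lta X$, and $r^X_\eta$ is $\id_X\lten_A\eta$ on $X\lta A$.
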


\begin{proof}
For a Frobenius algebra one has $\D A\cong{}_{1}A_{\nu}$ in $\Db(A^e)$,
the regular bimodule with right action twisted by $\nu$. This bimodule
is invertible, with inverse ${}_{1}A_{\nu^{-1}}$, and conjugation by it
is, up to the usual identifications, the automorphism of
$\HH^\bullet(A)$ induced by $\nu$ or by $\nu^{-1}$, depending on
conventions --- immaterial here. By Lemma~\ref{lem:twoactions}, which
uses no hypothesis on the global dimension, together with
\eqref{eq:conjchar}, this conjugation is the identity.
\end{proof}

\begin{remark}\label{rem:sa}
Corollary~\ref{cor:nakayama} is the main theorem of the recent paper
\cite{SuarezAlvarez25} of Su\'arez-\'Alvarez, who proves it by an
explicit cochain-level analysis; in degree zero it is the classical fact
that $\nu$ fixes the center pointwise. The triviality on cohomology
should be contrasted with the homological side, where the corresponding
operator is in general nontrivial: for self-injective algebras this is
the realm of the twisted Calabi--Yau and Batalin--Vilkovisky literature
(see \cite{LambreZhouZimmermann16} and the references there), while for
$\gldim A<\infty$ it is the Coxeter transformation, by
Theorem~\ref{thm:matrix}.
\end{remark}

\begin{remark}[Centrality in the derived Picard group]\label{rem:central}
Lemma~\ref{lem:commutation} with $B=A$ says that the isomorphism class
of $\D A$ is \emph{central} in the derived Picard group
$\mathrm{DPic}(A)$, for every finite-dimensional $A$. Since the
cohomological transport along an invertible bimodule is conjugation,
Theorem~\ref{thm:identity} is an instance of the tautology that
conjugation by a central element is trivial. Keller's realization of all
the groups $\HH^{m+1}(A)$ inside relative derived Picard groups over the
graded algebras $R=\kk[\varepsilon]/(\varepsilon^2)$, with
$\varepsilon$ placed in an arbitrary integer degree \cite{Keller04},
turns this tautology into a second proof: the proof of
Lemma~\ref{lem:commutation} carries over to the $R$-relative setting
(the duality $\Hom_R(-,R)$ over the graded Frobenius algebra $R$ enjoys
the same biduality and d\'evissage properties), so
$\Hom_R(A\otimes R,R)\cong\D A\otimes R$ is central in each relative
group and its conjugation action on $\HH^{m+1}(A)$ is trivial. We have
preferred the direct argument of Lemma~\ref{lem:twoactions}, which keeps
the signs visible.
\end{remark}

\begin{remark}[Signs]\label{rem:signcheck}
The proof of Lemma~\ref{lem:twoactions} suppresses routine Koszul-sign
bookkeeping; the verification must be carried out in a fixed convention.
Three independent cross-checks exclude a hidden universal sign. First,
an identity of the form $\sigma^\bullet\eta=(-1)^{m}\eta$ on $\HH^m$
would, by compatibility with the Gerstenhaber bracket
(Theorem~\ref{thm:invariance}), force $[\eta,\theta]=-[\eta,\theta]$
for all classes, hence the vanishing of the bracket whenever
$\operatorname{char}\kk\neq2$; this fails already for the Kronecker
algebra, whose $\HH^1$ is the three-dimensional simple Lie algebra of
outer derivations \cite{Happel89hh}. Second, on $\HH^0(A)=Z(A)$ the lemma
admits a one-line direct verification: for central $z$ and $f\in\D A$,
\[
  (z\cdot f)(a)=f(az)=f(za)=(f\cdot z)(a).
\]
Third, the geometric counterpart of Theorem~\ref{thm:identity} for
smooth projective varieties admits an independent verification by a
line-bundle computation; see Remark~\ref{rem:serretrivial}.
\end{remark}

\begin{example}[The exterior algebra]\label{ex:exterior}
Let $\Lambda=\Lambda(x,y)$ be the exterior algebra on two generators
over a field of characteristic $\neq2$: a Frobenius algebra which is not
symmetric, with Nakayama automorphism the parity involution $\nu$,
$\nu(x)=-x$, $\nu(y)=-y$. Corollary~\ref{cor:nakayama} predicts that
$\nu$ acts trivially on $\HH^\bullet(\Lambda)$; we verify the prediction
in low degrees. On $\HH^0=Z(\Lambda)=\kk\oplus\kk\,xy$ the involution
acts trivially. In degree one, the parity-odd derivations are inner:
the derivation $x\mapsto xy$, $y\mapsto0$ equals
$-\tfrac12\operatorname{ad}_y$, and there is no derivation with
$x\mapsto1$, by the Leibniz rule applied to $x^2=0$; the outer
derivations form the parity-even space $\mathfrak{gl}_2$ of linear
substitutions, on which conjugation by $\nu=-\id$ is trivial. In degree
two, the parity-odd deformation direction $x^2=\varepsilon x$ is
trivialized to first order by the substitution
$x\mapsto x-\varepsilon/2$, again in accordance with the prediction.
\end{example}

\section{Smooth and proper differential graded algebras}\label{sec:dg}

This section extends the construction to differential graded algebras,
following a suggestion of B.~Keller. The pay-off is geometric: the
perfect derived category of any smooth projective variety is of the form
$\per\cA$ for a smooth and proper dg algebra $\cA$
\cite{KellerICM06}, so the enriched invariant of this paper becomes an
invariant of varieties.

Throughout, $\cA$ and $\cB$ are dg algebras over $\kk$, $\Dcat(\cA)$ is
the derived category of right dg modules, $\per\cA\subseteq\Dcat(\cA)$
is the thick subcategory generated by $\cA$, and
$\cA^e=\cA\otimes_\kk\cA^{\mathrm{op}}$. The algebra $\cA$ is
\emph{smooth} if $\cA\in\per(\cA^e)$ and \emph{proper} if the total
cohomology of $\cA$ is finite-dimensional. For an ordinary
finite-dimensional algebra, properness is automatic and smoothness
implies finite global dimension; the converse holds when
$A/\!\operatorname{rad}A$ is separable, for instance for elementary
algebras (see \cite{KellerICM06}). Morphisms in this section are
dg Morita morphisms, i.e.\ bimodules, as in \cite{KellerICM06,Toen07}.

\subsection{Invertibility and centrality of the dual bimodule}

\begin{lemma}\label{lem:dgduals}
Let $X\in\Dcat(\cA\otimes\cB^{\mathrm{op}})$ be invertible: suppose
there exists $Y\in\Dcat(\cB\otimes\cA^{\mathrm{op}})$ with
$X\lten_{\cB}Y\cong\cA$ in $\Dcat(\cA^e)$ and $Y\lten_{\cA}X\cong\cB$
in $\Dcat(\cB^e)$. Then:
\begin{enumerate}
\item $X$ is perfect over $\cB$ and over $\cA^{\mathrm{op}}$, and the
canonical morphism $\cA\to\RHom_{\cB}(X,X)$, $a\mapsto(x\mapsto ax)$,
is invertible in $\Dcat(\cA^e)$;
\item there are isomorphisms
\[
  Y\;\cong\;\RHom_{\cB}(X,\cB)
  \;\cong\;\RHom_{\cA^{\mathrm{op}}}(X,\cA)
\]
in $\Dcat(\cB\otimes\cA^{\mathrm{op}})$.
\end{enumerate}
\end{lemma}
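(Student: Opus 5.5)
The approach is to read off all three claims from the fact that the tensor functors attached to $X$ and $Y$ are mutually inverse equivalences, and then to identify the adjoints of these functors. Put $F=-\lten_{\cA}X\colon\Dcat(\cA)\to\Dcat(\cB)$ and $G=-\lten_{\cB}Y\colon\Dcat(\cB)\to\Dcat(\cA)$. By associativity and unitality of the derived tensor product, the hypotheses give $GF\cong\id$ and $FG\cong\id$, so $F$ and $G$ are quasi-inverse triangulated equivalences. The right adjoint of $F$ is $\RHom_{\cB}(X,-)$, and since an equivalence has its quasi-inverse as adjoint, we obtain
\[
  -\lten_{\cB}Y\;\cong\;\RHom_{\cB}(X,-)
\]
as functors. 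To apply this in a bimodule-compatible way, I will use the canonical evaluation map $N\lten_{\cB}\RHom_{\cB}(X,\cB)\to\RHom_{\cB}(X,N)$, exactly as in Step~1 of the proof of Lemma~\ref{lem:invertible}, together with the perfection of $X$ established below.

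\emph{Perfection.} An equivalence $F$ preserves compact objects, and the compact objects of $\Dcat(\cB)$ are precisely those of $\per\cB$. Hence $X=F(\cA)$, viewed as a right $\cB$-module, lies in $\per\cB$. The symmetric argument with the left tensor functors $Y\lten_{\cA}-$ and $X\lten_{\cB}-$ between left module categories shows that $X$ is perfect over $\cA^{\mathrm{op}}$.

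\emph{The map $\cA\to\RHom_{\cB}(X,X)$.} This morphism of $\cA$-bimodules is the unit of the adjunction $(F,\RHom_{\cB}(X,-))$ evaluated at $\cA$. Because $F$ is fully faithful, the unit is invertible. The map is defined on the bimodule level, once $X$ is replaced by an h-projective bimodule resolution, and quasi-isomorphisms are detected on underlying complexes. It is therefore an isomorphism in $\Dcat(\cA^e)$, which proves (1).

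\emph{The isomorphisms in (2).} Consider the composite
\[
  Y\cong\cB\lten_{\cB}Y\cong\RHom_{\cB}(X,X)\lten_{\cB}Y\longrightarrow\RHom_{\cB}(X,X\lten_{\cB}Y)\cong\RHom_{\cB}(X,\cA).
\]
This line is wrong as written, so I will use the cleaner route. The evaluation morphism
\[
  \mathrm{ev}\colon N\lten_{\cB}\RHom_{\cB}(X,\cB)\to\RHom_{\cB}(X,N)
\]
is invertible for $X\in\per\cB$ by d\'evissage from $X=\cB$. Taking $N=\cB$ gives nothing new, so I will take $N$ to be arbitrary right $\cB$-modules. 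Then $-\lten_{\cB}\RHom_{\cB}(X,\cB)\cong\RHom_{\cB}(X,-)\cong-\lten_{\cB}Y$, and these natural isomorphisms are compatible with the left $\cB$-action. Evaluating at $N=\cB$ naively only sees $\cA$-linearity on one side, so I will instead apply the equivalence $Y\lten_{\cA}-$ directly:
\[
  \RHom_{\cB}(X,\cB)\cong Y\lten_{\cA}X\lten_{\cB}\RHom_{\cB}(X,\cB)\cong Y\lten_{\cA}\RHom_{\cB}(X,X)\cong Y\lten_{\cA}\cA\cong Y.
\]
Here the second step is $\mathrm{ev}$ with $N=X$, an isomorphism of $\cA$-$\cA$-bimodules after tensoring, and the third is (1). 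Every map in this chain is a bimodule map, so the chain lives in $\Dcat(\cB\otimes\cA^{\mathrm{op}})$. The mirror argument over $\cA^{\mathrm{op}}$ gives $Y\cong\RHom_{\cA^{\mathrm{op}}}(X,\cA)$, this time using the left unit map $\cB\to\RHom_{\cA^{\mathrm{op}}}(X,X)$, which is invertible by the symmetric form of (1).

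\emph{Main obstacle.} The delicate point is keeping every isomorphism in the derived category of bimodules rather than only of one-sided modules, since adjunction arguments a priori give one-sided statements. I will handle this by writing each map as an explicit evaluation or unit morphism between h-projective bimodule resolutions, as in Lemmas~\ref{lem:invertible} and \ref{lem:commutation}. Each such map is then visibly bilinear, and its invertibility is checked after forgetting one of the two structures.
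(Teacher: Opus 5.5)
Your proposal is correct and follows the paper's proof essentially verbatim. For (1) you use that $-\lten_{\cA}X$ preserves compact objects and is fully faithful. For (2) you use the chain $\RHom_{\cB}(X,\cB)\cong(Y\lten_{\cA}X)\lten_{\cB}\RHom_{\cB}(X,\cB)\cong Y\lten_{\cA}\RHom_{\cB}(X,X)\cong Y\lten_{\cA}\cA\cong Y$, built from the evaluation isomorphism for $X\in\per\cB$ and part (1), plus the mirror argument for $\RHom_{\cA^{\mathrm{op}}}(X,\cA)$; both steps match the paper's. In a write-up, delete the abandoned first composite, which wrongly treats $\RHom_{\cB}(X,X)$ as $\cB$ rather than $\cA$, and the unused remarks about evaluating at $N=\cB$.
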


\begin{proof}
(1) The functor $-\lten_{\cA}X\colon\Dcat(\cA)\to\Dcat(\cB)$ is an
equivalence with quasi-inverse $-\lten_{\cB}Y$, hence preserves compact
objects; as $X\cong\cA\lten_{\cA}X$ is the image of the compact
generator $\cA$, it is perfect over $\cB$. The canonical morphism
$\cA\to\RHom_\cB(X,X)$ induces on cohomology the maps
$\Hom_{\Dcat(\cA)}(\cA,\cA[n])\to\Hom_{\Dcat(\cB)}(X,X[n])$ given by
the fully faithful functor $-\lten_\cA X$, hence is a
quasi-isomorphism; and it is a morphism of $\cA$-bimodules by
construction. The mirror statements hold over $\cA^{\mathrm{op}}$.

(2) Write $X^\vee=\RHom_\cB(X,\cB)$. Exactly as in Step~1 of the proof
of Lemma~\ref{lem:invertible}, d\'evissage from the case $M=\cB$ shows
that for every $M\in\per\cB$ and every $N\in
\Dcat(\cB^e)$ or $\Dcat(\cA\otimes\cB^{\mathrm{op}})$ the canonical
morphism
\[
  N\lten_{\cB}\RHom_{\cB}(M,\cB)\longrightarrow\RHom_{\cB}(M,N),
  \qquad n\otimes f\longmapsto\bigl(x\mapsto n\cdot f(x)\bigr),
\]
is invertible. Using it twice, at $(M,N)=(X,\,Y\lten_\cA X)$ and at
$(M,N)=(X,X)$:
\begin{align*}
X^\vee
&\cong\RHom_\cB\bigl(X,\;Y\lten_{\cA}X\bigr)
&&\text{since }Y\lten_{\cA}X\cong\cB,\\
&\cong\bigl(Y\lten_{\cA}X\bigr)\lten_{\cB}X^\vee
&&\text{d\'evissage at }M=X,\\
&\cong Y\lten_{\cA}\bigl(X\lten_{\cB}X^\vee\bigr)
&&\text{associativity},\\
&\cong Y\lten_{\cA}\RHom_\cB(X,X)
&&\text{d\'evissage at }M=X,\ N=X,\\
&\cong Y\lten_{\cA}\cA\;\cong\;Y
&&\text{by (1).}
\end{align*}
All the isomorphisms are bimodule isomorphisms. The identification of
$Y$ with $\RHom_{\cA^{\mathrm{op}}}(X,\cA)$ is the mirror computation.
\end{proof}

\begin{lemma}\label{lem:dginvertible}
Let $\cA$ be smooth and proper. Then $\D\cA$ is invertible in
$\Dcat(\cA^e)$, with inverse $\RHom_{\cA}(\D\cA,\cA)$.
\end{lemma}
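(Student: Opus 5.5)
The plan is to imitate the proof of Lemma~\ref{lem:invertible}, replacing finite global dimension with smoothness and properness. First I show that $\D\cA$ is perfect on each side. Properness says $H^*(\cA)$ is finite-dimensional. Smoothness says $\cA\in\per(\cA^e)$. For any $M\in\Dcat(\cA)$ with finite-dimensional total cohomology, we have $M\cong M\lten_\cA\cA$, and the functor $M\lten_\cA(-)\colon\Dcat(\cA^e)\to\Dcat(\cA)$ preserves thick closures. It sends $\cA^e$ to $M\otimes_\kk\cA$, which lies in $\per\cA$ because $M$ has finite-dimensional total cohomology over a field and so is a finite sum of shifts of $\kk$ in $\Dcat(\kk)$. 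Hence $M\in\per\cA$. Applying this to $M=\D\cA$, which has finite-dimensional total cohomology by properness, shows that $\D\cA$ is perfect as a right $\cA$-module. The mirror argument, with the left factor, shows it is perfect as a left module.

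Next, set $Y=\RHom_\cA(\D\cA,\cA)$ and $Y'=\RHom_{\cA^{\mathrm{op}}}(\D\cA,\cA)$. I would rerun Steps~1--3 of Lemma~\ref{lem:invertible} verbatim in the dg setting.
\begin{itemize}
\item Step~1 uses the dévissage isomorphism $N\lten_\cA\RHom_\cA(M,\cA)\cong\RHom_\cA(M,N)$ for $M\in\per\cA$, already used in the proof of Lemma~\ref{lem:dgduals}(2). Taking $M=N=\D\cA$ gives $\D\cA\lten_\cA Y\cong\RHom_\cA(\D\cA,\D\cA)$.
\item Step~2 applies the dg tensor--hom adjunction, which gives $\RHom_\cA(\D\cA,\D\cA)\cong\D(\D\cA\lten_\cA\cA)=\D\D\cA$. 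This is isomorphic to $\cA$ because the bidual map $\cA\to\D\D\cA$ is a quasi-isomorphism when $H^*(\cA)$ is finite-dimensional (using $H^*\D=\D H^*$ over a field).
\item Step~3 is the mirror: $Y'\lten_\cA\D\cA\cong\cA$. Associativity then gives $Y\cong Y'$, so $Y$ is a two-sided inverse.
\end{itemize}
All maps are defined on h-projective bimodule resolutions, so they are isomorphisms in $\Dcat(\cA^e)$, exactly as before.

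The step I expect to be the main obstacle is the perfectness of $\D\cA$ over $\cA$. In the algebra case this was immediate from finite global dimension; here it needs the smoothness argument above. Within that argument, the delicate point is justifying that $M\otimes_\kk\cA$ is perfect, which relies on $M$ being quasi-isomorphic over $\kk$ to its finite-dimensional cohomology. A secondary point needing care is that the bidual map and the evaluation maps are genuinely bimodule morphisms, so that the isomorphisms hold in $\Dcat(\cA^e)$ and not merely one-sidedly. I would handle this as in Lemma~\ref{lem:invertible}, by writing the maps on the bimodule level before checking invertibility on underlying one-sided complexes.
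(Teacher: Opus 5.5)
Your proposal is correct and follows the paper's proof. Both arguments get perfectness of $\D\cA$ on both sides from smoothness and properness, use the biduality quasi-isomorphism $\cA\to\D\D\cA$, and then rerun Steps~1--3 of Lemma~\ref{lem:invertible} with h-projective resolutions. The only difference is that you prove directly, via $M\cong M\lten_{\cA}\cA$ and the thick closure of $\cA^e$ in $\Dcat(\cA^e)$, that over a smooth $\cA$ every module with finite-dimensional total cohomology is perfect; the paper simply cites this from \cite{KellerICM06}.
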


\begin{proof}
Properness gives that $H(\D\cA)=\D H(\cA)$ is finite-dimensional, and
smoothness implies that objects with finite-dimensional total cohomology
are perfect, both in $\Dcat(\cA)$ and in $\Dcat(\cA^{\mathrm{op}})$
\cite{KellerICM06}; hence $\D\cA$ is perfect on both sides.
Moreover the biduality morphism $\cA\to\D\D\cA$ is a quasi-isomorphism,
because the cohomology of $\cA$ is degreewise finite-dimensional. With
these two inputs, the three steps of the proof of
Lemma~\ref{lem:invertible} apply verbatim, with h-projective
resolutions replacing the bounded projective representatives.
\end{proof}

\begin{lemma}\label{lem:dgcommutation}
Let $\cA$, $\cB$ be smooth and proper dg algebras, let
$X\in\Dcat(\cA\otimes\cB^{\mathrm{op}})$ be invertible, and set
$\ar_{\cA}:=\D\cA[-1]$, $\ar_{\cB}:=\D\cB[-1]$. Then
\[
  \ar_{\cA}\lten_{\cA}X\;\cong\;X\lten_{\cB}\ar_{\cB}
  \qquad\text{in }\Dcat(\cA\otimes\cB^{\mathrm{op}}).
\]
\end{lemma}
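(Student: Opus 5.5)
The plan is to mirror the proof of Lemma~\ref{lem:commutation}: exhibit both $\D\cA\lten_{\cA}X$ and $X\lten_{\cB}\D\cB$ as the $\kk$-dual of the inverse $Y$ of $X$, and then shift by $[-1]$. By Lemma~\ref{lem:dgduals}(1), $X$ is perfect over $\cB$ and over $\cA^{\mathrm{op}}$. By Lemma~\ref{lem:dgduals}(2), $Y\cong\RHom_{\cB}(X,\cB)\cong\RHom_{\cA^{\mathrm{op}}}(X,\cA)$ as $\cB$-$\cA$-bimodules. So it suffices to produce bimodule isomorphisms
\[
  \alpha_X\colon X\lten_{\cB}\D\cB\xrightarrow{\ \sim\ }\D\RHom_{\cB}(X,\cB),
  \qquad
  \beta_X\colon \D\cA\lten_{\cA}X\xrightarrow{\ \sim\ }\D\RHom_{\cA^{\mathrm{op}}}(X,\cA).
\]

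First, replace $X$ by an h-projective resolution over $\cA\otimes\cB^{\mathrm{op}}$. Since $\kk$ is a field, this resolution is also h-projective over $\cB$ and over $\cA^{\mathrm{op}}$ separately. Then define $\alpha_X$ by the evaluation formula $x\otimes\varphi\mapsto(f\mapsto\varphi(f(x)))$ at the level of dg modules, inserting the Koszul signs. Check that it is balanced over $\cB$ and is a morphism of $\cA$-$\cB$-bimodules; this is the same computation as in part~(a) of Lemma~\ref{lem:commutation}.

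Second, show $\alpha_X$ is a quasi-isomorphism by d\'evissage. For $X=\cB$ both sides identify with $\D\cB$ and $\alpha_{\cB}$ is the identity. Both sides are triangulated functors of the underlying right $\cB$-module and commute with shifts and direct summands. Whether $\alpha_X$ is a quasi-isomorphism can be checked after forgetting the left $\cA$-action. Since $X\in\per\cB$, the thick subcategory generated by $\cB$ contains $X$, so $\alpha_X$ is invertible. Note that $\D$ turns the triangulated functor $\RHom_{\cB}(-,\cB)$ into a covariant triangulated one, so no finiteness is needed for this step beyond perfection of $X$. The argument for $\beta_X$ is the mirror one over $\cA^{\mathrm{op}}$, using perfection of $X$ over $\cA^{\mathrm{op}}$.

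Combining the two steps gives
\[
  \D\cA\lten_{\cA}X\;\cong\;\D Y\;\cong\;X\lten_{\cB}\D\cB
\]
in $\Dcat(\cA\otimes\cB^{\mathrm{op}})$, and shifting by $[-1]$ yields the claim. Here we use that $\lten$ commutes with shifts, i.e.\ $(\D\cA[-1])\lten_{\cA}X\cong(\D\cA\lten_{\cA}X)[-1]$, and likewise on the other side.

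Smoothness and properness enter only to make $\ar_{\cA}$ and $\ar_{\cB}$ invertible (Lemma~\ref{lem:dginvertible}); the isomorphism itself needs only that $X$ is invertible. The main obstacle is the bimodule bookkeeping. One must check that the two descriptions of $Y$ from Lemma~\ref{lem:dgduals}(2) are isomorphic as bimodules, not merely one-sidedly. One must also ensure that $\alpha_X$ and $\beta_X$ are defined on a single model of $X$ that is simultaneously cofibrant enough on both sides. I would handle both by working with a fixed h-projective bimodule resolution throughout, and by taking the bimodule isomorphisms in Lemma~\ref{lem:dgduals}(2) as given.
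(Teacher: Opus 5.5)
Your proposal is correct and follows the paper's proof essentially verbatim. You define the evaluation maps $\alpha_X$ and $\beta_X$ on h-projective bimodule representatives, prove them invertible by d\'evissage using that $X$ is perfect on both sides (Lemma~\ref{lem:dgduals}(1)), identify both sides with $\D Y$ via Lemma~\ref{lem:dgduals}(2), and finally shift by $[-1]$. Your observation that smoothness and properness are not needed for the isomorphism itself, only for the invertibility of $\ar_\cA$ and $\ar_\cB$, is also accurate.
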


\begin{proof}
The morphisms $\alpha_X$ and $\beta_X$ of Lemma~\ref{lem:commutation}
are defined by the same evaluation formulas on h-projective
representatives, are invertible by the same d\'evissage --- using that
$X$ is perfect on both sides, Lemma~\ref{lem:dgduals}(1) --- and
combine with Lemma~\ref{lem:dgduals}(2), which replaces
\eqref{eq:duals}, exactly as before.
\end{proof}

\subsection{The calculus in the derived models, and the dg theorem}

For a dg algebra $\cA$ we take as cohomology
$\HH^m(\cA)=\Hom_{\Dcat(\cA^e)}(\cA,\cA[m])$ with the Yoneda
(composition) product, and as homology the mixed complex
$\cA\lten_{\cA^e}\cA$ of Keller \cite{Keller98}, whose homology is
$\HH_\bullet(\cA)$ and which carries the Connes differential $B$; the
cap product is the action
\[
  z\cap\eta\;:=\;H\bigl(\id_\cA\lten_{\cA^e}\eta\bigr)(z),
\]
that is, the functoriality of $\cA\lten_{\cA^e}(-)$ applied to
$\eta\colon\cA\to\cA[m]$. For an ordinary algebra these models agree
with the classical operations by the comparison results of
\cite{Armenta19,ArmentaKeller17}. An invertible bimodule $X$ induces a
transport of all this data: the conjugation $\gamma_X$ on
$\HH^\bullet$, characterized as in \eqref{eq:conjchar}, and the
isomorphism of mixed complexes of \cite{Keller98} on $\HH_\bullet$.

\begin{theorem}\label{thm:dgmain}
Let $\cA$ be a smooth and proper dg algebra over $\kk$, and let
$\sigma_{\cA}$ denote the transport along the invertible bimodule
$\ar_{\cA}=\D\cA[-1]$.
\begin{enumerate}
\item\textup{(Centrality.)} For every smooth and proper $\cB$ and every
invertible $X\in\Dcat(\cA\otimes\cB^{\mathrm{op}})$ one has
$\ar_\cA\lten_\cA X\cong X\lten_\cB\ar_\cB$; in particular the
transports along $X$ intertwine $\sigma_\cA$ and $\sigma_\cB$, and the
isomorphism class of
$\bigl(\HH_\bullet(\cA),B,\cap,\sigma_{\cA}\bigr)$ is invariant under
dg Morita equivalence of smooth proper dg algebras.
\item\textup{(Cohomology.)} The cohomological component of
$\sigma_\cA$ is the identity of $\HH^\bullet(\cA)$.
\item\textup{(Homology.)} The operator
$\sigma_\bullet$ on $\HH_\bullet(\cA)$ commutes with the Connes
differential $B$ and with every cap operator:
$\sigma_\bullet(z\cap\eta)=\sigma_\bullet(z)\cap\eta$.
\end{enumerate}
\end{theorem}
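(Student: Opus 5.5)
The proof transports the three arguments of the ungraded case (Theorem~\ref{thm:invariance}, Lemma~\ref{lem:twoactions}, Theorem~\ref{thm:identity}) to the dg setting. In each case the work lies in checking which hypotheses the earlier proofs actually used.

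For (1), Lemma~\ref{lem:dginvertible} shows that $\ar_\cA=\D\cA[-1]$ is invertible in $\Dcat(\cA^e)$. Lemma~\ref{lem:dgcommutation} gives the isomorphism $\ar_\cA\lten_\cA X\cong X\lten_\cB\ar_\cB$. Transports along invertible bimodules are functorial for the derived tensor product. On $\HH^\bullet$ this holds because $\gamma_{X\lten Y}=\gamma_Y\circ\gamma_X$, by the same middle-factor argument as in the ungraded case. On $\HH_\bullet$ it holds by functoriality of Keller's mixed-complex construction \cite{Keller98}, which depends only on the isomorphism class of the bimodule in the derived category. Applying this functoriality to the two equal composites gives $\HH(X)\circ\sigma_\cA=\sigma_\cB\circ\HH(X)$. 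Keller's transport is an isomorphism of mixed complexes, so it commutes with $B$. It is compatible with $\cap$ in the projection-formula sense $\HH_\bullet(X)(z\cap\eta)=\HH_\bullet(X)(z)\cap\gamma_X(\eta)$. I would verify this by writing $\cap$ as the functoriality of $\cA\lten_{\cA^e}(-)$ and using naturality of Keller's trace isomorphism $\cA\lten_{\cA^e}\cA\cong\cB\lten_{\cB^e}\cB$ in the coefficient bimodule. Derived-category invariance of the quadruple then follows.

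For (2), the point is that Lemma~\ref{lem:twoactions} never used finite-dimensionality in an essential way. Its proof uses three ingredients. The first is the morphisms $\alpha_X,\beta_X$, natural in $X$, which are invertible for $X$ perfect on the relevant side; here we only need $X=\cA$ and $X=\cA[m]$. The second is the identifications $\RHom_\cA(\cA,\cA)\cong\cA\cong\RHom_{\cA^{\mathrm{op}}}(\cA,\cA)$, under which precomposition with $\eta$ becomes right, respectively left, multiplication by $\eta$. The third is \eqref{eq:regularactions}. All three hold verbatim for a dg algebra with h-projective resolutions of bimodules. Hence $\ell^{\D\cA}_\eta=r^{\D\cA}_\eta$, both being the dual of $\eta$. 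The remaining steps are formal. Invertibility of $\D\cA$ (Lemma~\ref{lem:dginvertible}) makes $\gamma_{\D\cA}$ well defined by \eqref{eq:conjchar}, and the equality just proved shows $\gamma_{\D\cA}=\id$. Shift invariance gives $\gamma_{\cA[-1]}=\id$. As in \eqref{eq:sigmaisconj}, we conclude $\sigma^\bullet=\gamma_{\ar_\cA}=\id$.

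Part (3) follows by combining (2) with the projection formula of (1): $\sigma_\bullet(z\cap\eta)=\sigma_\bullet z\cap\sigma^\bullet\eta=\sigma_\bullet z\cap\eta$. Commutation with $B$ holds because the transport is a morphism of mixed complexes.

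The main obstacle is establishing the projection formula for Keller's transport in the dg derived model. I would prove it directly. The cap product is $H(\id\lten_{\cA^e}\eta)$, and the transport is induced by the chain of isomorphisms
\[
\cA\lten_{\cA^e}\cA\cong (X^{-1}\lten_\cA X)\lten_{\cB^e}\cB\cong \cB\lten_{\cB^e}\cB .
\]
Naturality of this chain in the coefficient placed in the middle turns $\id\lten\eta$ into $\id\lten\ell^X_\eta=\id\lten r^X_{\gamma_X\eta}$, which yields the formula. The Koszul signs, which the paper suppresses in Remark~\ref{rem:signcheck}, must be tracked in a fixed convention to confirm that no sign appears.
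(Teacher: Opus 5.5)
Your proposal is correct and follows essentially the paper's proof: (1) comes from Lemma~\ref{lem:dgcommutation} together with functoriality of Keller's transport and of conjugation; (2) reruns the argument of Lemma~\ref{lem:twoactions} on h-projective representatives, with invertibility of $\D\cA$ supplied by Lemma~\ref{lem:dginvertible}; and (3) combines naturality of the transport in the coefficient at $\eta$ with $\gamma_{\ar_\cA}=\id$. The only difference is organizational: you prove the projection formula $\HH_\bullet(X)(z\cap\eta)=\HH_\bullet(X)(z)\cap\gamma_X(\eta)$ for an arbitrary invertible $X$ inside (1), which the paper's proof of (1) leaves implicit, and you then specialize it to $\ar_\cA$ in (3).
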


\begin{proof}
(1) is Lemma~\ref{lem:dgcommutation} together with the functoriality of
Keller's cyclic functor \cite{Keller98} and of conjugation. (2) The
proof of Lemma~\ref{lem:twoactions} uses only the naturality of
$\alpha$ and $\beta$, the perfectness of $\cA$ and $\cA[m]$ on both
sides, the biduality $\cA\cong\D\D\cA$, and the evaluation computations
on h-projective representatives; all are available here by
Lemmas~\ref{lem:dgduals} and~\ref{lem:dginvertible}, and the shift
contributes trivially, as before. (3) Commutation with $B$ holds
because $\sigma_\bullet$ is induced by a morphism of mixed complexes
\cite{Keller98}. For the cap operators, the transport along an
invertible bimodule is a composite of maps natural in the bimodule
coefficient; naturality at $\eta$ yields
$\sigma_\bullet(z\cap\eta)=\sigma_\bullet(z)\cap\gamma_{\ar}(\eta)$,
and $\gamma_{\ar}=\id$ by (2). A consequence of (2) worth making
explicit: compatibility with any further structure carried by
$\HH^\bullet(\cA)$ --- the cup product, the Gerstenhaber bracket,
indeed the full $B_\infty$-structure --- holds trivially, so no
comparison results are needed on the cohomological side.
\end{proof}

\begin{remark}[The Serre functor acts trivially on
$\HH^\bullet$]\label{rem:serretrivial}
Let $X$ be a smooth projective variety of dimension $d$ with Serre
functor $S=-\otimes\omega_X[d]$, and let $\cA$ be a smooth proper dg
algebra with $\per\cA\simeq\per X$ \cite{KellerICM06}. Under the
identification of $\HH^\bullet(\cA)$ with
$\operatorname{Ext}^\bullet_{X\times X}(\mathcal{O}_\Delta,
\mathcal{O}_\Delta)$, Theorem~\ref{thm:dgmain}(2) says that conjugation
by the Serre kernel $\mathcal{O}_\Delta\otimes p^*\omega_X[d]$ is
trivial. This admits a direct verification, which is the third sign
check promised in Remark~\ref{rem:signcheck}: conjugating a class
$\eta$ by the kernel amounts to twisting by the pull-back line bundle
$p^*\omega_X$ and shifting; since
$p^*\omega_X|_\Delta\cong q^*\omega_X|_\Delta$, the twist acts
trivially on
$\operatorname{Ext}^\bullet(\mathcal{O}_\Delta,\mathcal{O}_\Delta)$,
and the shift contributes nothing.
\end{remark}

\begin{remark}[Lefschetz form of Theorem~\ref{thm:matrix}]
\label{rem:lefschetzdg}
For a general smooth proper $\cA$, Hochschild homology is no longer
concentrated in degree zero --- for a variety,
$\HH_n(X)\cong\bigoplus_{p-q=n}H^q(X,\Omega^p_X)$ by
Hochschild--Kostant--Rosenberg --- and the matrix statement of
Theorem~\ref{thm:matrix} is replaced by its Lefschetz shadow: the Euler
class $\ch\colon\Kn(\per\cA)\to\HH_0(\cA)$ of Shklyarov
\cite{Shklyarov13} intertwines the action of the class of $\ar_\cA$ on
$\Kn$ with the degree-zero part of $\sigma_\bullet$, and the
supertraces $\operatorname{str}(\sigma_\bullet^{\,m})$ are computed by
the Mukai pairing and the Hirzebruch--Riemann--Roch theorem of
\cite{Shklyarov13}. We leave systematic computations --- Grassmannians,
hypersurfaces --- for future work.
\end{remark}

\begin{example}[Projective space]\label{ex:projspace}
Let $X=\PP^n$. By Beilinson \cite{Beilinson78},
$\per\PP^n\simeq\per B_n$ for the endomorphism algebra
$B_n=\operatorname{End}\bigl(\bigoplus_{i=0}^{n}\mathcal{O}(i)\bigr)$,
an elementary algebra of finite global dimension whose Cartan matrix
has entries $\binom{n+j-i}{n}$ for $0\le i\le j\le n$ and $0$
otherwise. We claim that
\[
  \coxpol_{B_n}(x)\;=\;\bigl(x+(-1)^{n}\bigr)^{\,n+1},
\]
a single Jordan block with eigenvalue $(-1)^{n+1}$, as one checks
directly for small $n$. Indeed, on $\Kn(\PP^n)\cong\ZZ^{n+1}$ the shift
acts by $-1$ and the Serre functor by $(-1)^{n}[\otimes\omega_X]$,
so the Coxeter transformation acts by
$(-1)^{n+1}[\otimes\mathcal{O}(-n-1)]$; and multiplication by
$[\omega_X]$ is unipotent, because $[\omega_X]$ differs from $1$ by a
nilpotent element of the ring $\Kn(\PP^n)$. Hence all eigenvalues equal
$(-1)^{n+1}$, which forces the displayed polynomial. Note that the
Hochschild homology of $\PP^n$ \emph{is} concentrated in degree zero,
of dimension $n+1$, since the Hodge numbers of $\PP^n$ are diagonal; so
the homological picture of Theorem~\ref{thm:matrix} persists verbatim
on the geometric side.
\end{example}

\begin{example}[Elliptic curves: Calabi--Yau degeneracy]
\label{ex:elliptic}
Let $E$ be an elliptic curve and $\cA$ a smooth proper dg algebra with
$\per\cA\simeq\per E$. Since $\omega_E\cong\mathcal{O}_E$, the Serre
kernel is $\mathcal{O}_\Delta[1]$, so $\D\cA\cong\cA[1]$ in
$\Dcat(\cA^e)$, hence $\ar_\cA\cong\cA$ and $\sigma_{\cA}$ is the
identity transport --- although $\HH_\bullet(E)$ is large:
$\HH_1=H^0(\Omega^1_E)=\kk$,
$\HH_0=H^0(\mathcal{O}_E)\oplus H^1(\Omega^1_E)=\kk^2$,
$\HH_{-1}=H^1(\mathcal{O}_E)=\kk$. The enrichment of this paper
degenerates precisely on Calabi--Yau categories, in accordance with
Corollary~\ref{cor:fcy}; its content lies in the non-Calabi--Yau
directions, where $\omega_X$ is nontrivial --- curves of genus
$\geq2$, Fano and general-type varieties --- and where the supertraces
of $\sigma_\bullet^{\,m}$ are computed by Riemann--Roch from powers of
the canonical bundle, via Remark~\ref{rem:lefschetzdg}.
\end{example}

\section{Further discussion}\label{sec:discussion}

\subsection{Lefschetz formulas}
Theorem~\ref{thm:matrix} and Proposition~\ref{prop:naturality} are the
operator form, natural under derived equivalence, of computations that
exist in the literature as numerical identities. Han \cite{Han20} computes, for
elementary algebras of finite global dimension, all the ingredients of
the Lefschetz and Hirzebruch--Riemann--Roch formulas of Shklyarov
\cite{Shklyarov13} and Petit: the Shklyarov pairing on $\HH_0(A)$ has
matrix $C_A^{\mathrm{T}}$ in the basis of the $\bar e_i$, the Chern character is
$C_A^{-1}\dimv$, and Happel's trace formula is the Lefschetz formula of the
Serre bimodule. The content added here is that these identities organize
into a single automorphism of the full calculus, natural under derived
equivalence.

\subsection{Spectral radius, Mahler measure, entropy}
Since $\sigma_\bullet$ and $\Cox_A$ are conjugate
(Theorem~\ref{thm:matrix}), the spectral radius
$\rho(\sigma_\bullet)=\rho(\Cox_A)$ and the Mahler measure of
$\coxpol_A$ studied by de la Pe\~na \cite{delaPena15} are invariants of the
enriched calculus. These quantities have recently been categorified as
entropies: the categorical entropy of \cite{DHKK14} of the Serre functor,
the Hochschild homology and cohomology entropies of Kikuta--Ouchi
\cite{KikutaOuchi}, the computations of Han \cite{Han22} showing that for
higher hereditary algebras the entropy of the Serre functor and the
Hochschild entropies of the Serre quasi-functor categorify the spectral
radius and the polynomial growth rate of the Coxeter matrix, and the
Gromov--Yomdin type theorem of Chang--Schroll \cite{ChangSchroll25} for
gentle algebras, where the entropy of the Serre functor at $t=0$ equals
$\log\rho(\Kn(\nu))$. The operator $\sigma_A$ is the exact algebraic object
whose iteration these entropies measure on Hochschild homology.

\subsection{Periodicity and fractional Calabi--Yau algebras}
Corollary~\ref{cor:fcy} embeds into a substantial dictionary: periodicity
of the Coxeter transformation \cite{Ladkani08}, cyclotomicity
\cite{delaPena14,Ladkani21}, fractionally and twisted fractionally
Calabi--Yau algebras \cite{HerschendIyama11}, periodic trivial extensions
\cite{CDIM25}, Serre cyclotomic algebras \cite{Pfeifer25}, and the
fractional Calabi--Yau property of Tamari lattices \cite{Rognerud21}. In
each of these situations the hypothesis is a relation among powers of the
Serre bimodule, hence a relation imposed on the powers of $\sigma_A$
through Theorem~\ref{thm:invariance}(2).

\subsection{Twisted coefficients and Calabi--Yau completions}
The bimodule powers $\ar_A^{\lten m}$ assemble into Keller's Calabi--Yau
completions $\Pi_n(A)=T_A(\Theta[n-1])$, $\Theta$ the inverse dualizing
bimodule, whose formation is compatible with derived equivalence
\cite{Keller11}; for connected non-Dynkin quivers $\Pi_2(\kk Q)$ is the
preprojective algebra. The Tamarkin--Tsygan calculus of the classical
preprojective algebras of Dynkin quivers was computed by Etingof--Eu and Eu
\cite{EtingofEu06,Eu07}, with answers graded by the exponents and the
Coxeter number of the root system. From this perspective the enriched
calculus $(\HB(A),\sigma_A)$ is the ``first Fourier mode'' of the
$\Theta$-graded Hochschild theory of $A$, and the full graded theory is a
natural further refinement.

\subsection{The homology--cohomology asymmetry}
Theorem~\ref{thm:identity} explains the asymmetry visible throughout this
paper: all the Coxeter information carried by $\sigma_A$ lives on
Hochschild homology, and by Proposition~\ref{prop:concentration} that is
exactly where, for the algebras of Section~5, the Tamarkin--Tsygan
calculus alone is most degenerate. This is not an accident of the
elementary case. Cohomology classes act on every bimodule from two sides,
and the duality $\D A$ interchanges the two actions
(Lemma~\ref{lem:twoactions}), so conjugation cannot detect them; homology
classes transform like traces, and traces detect the Serre bimodule
through the Cartan matrix (Lemma~\ref{lem:chformula}). A nontrivial
cohomological shadow of the Coxeter symmetry should therefore be sought
in the twisted theory
$\bigoplus_m\HH^\bullet\bigl(A,\ar_A^{\lten_A m}\bigr)$, the natural home
of the preprojective algebra; see Question~\ref{q:twisted}.

\section{Open questions}\label{sec:questions}

\begin{question}\label{q:twisted}
By Theorem~\ref{thm:identity} the untwisted cohomological theory is blind
to $\sigma$. Develop the $\ar$-twisted theory
\[
  \bigoplus_{m\in\ZZ}\HH^\bullet\bigl(A,\ar_A^{\lten_A m}\bigr),
\]
with its cup product and its $\sigma$-equivariant structure; make precise
its relation to the Hochschild theory of the preprojective algebras and
Calabi--Yau completions of \cite{Keller11,HerschendIyama11}. Is the
twisted theory a complete derived invariant for interesting classes of
algebras?
\end{question}

\begin{question}
By Theorem~\ref{thm:invariance}, $\sigma_\bullet$ commutes with $B$ and
hence acts on cyclic, negative cyclic and periodic homology compatibly with
the Connes exact sequences of \cite{ArmentaKeller19}. Develop the resulting
``categorical monodromy'' picture, in which the eigenvalues of
$\sigma_\bullet$ play the role of the monodromy eigenvalues in the
singularity-theoretic interpretation of Coxeter spectra of Lenzing--de la
Pe\~na \cite{LenzingdelaPena08}.
\end{question}

\begin{question}\label{q:lattice}
Enrich $(\HB(A),\sigma_A)$ with the integral lattice
$\ch(\Kn(\per A))\subset\HH_0(A)$ and the Shklyarov pairing, whose matrix
is $C_A^{\mathrm{T}}$ \cite{Han20,Shklyarov13}; Example~\ref{ex:notcomplete}
shows that without this integral data the enriched calculus is not
complete, already for trees. The resulting structure
remembers the Cartan matrix up to $\ZZ$-congruence and the Coxeter
transformation up to $GL_n(\ZZ)$-conjugacy. For which classes of algebras
is this enriched invariant complete?
\end{question}

\begin{question}
Replace $\ar_A$ by the bimodule $\D A[-d]$ of higher
Auslander--Reiten theory for $d$-hereditary algebras and compare the
resulting
automorphisms with the entropies of Serre functors \cite{Han22} and
with the higher preprojective gradings of
\cite{Keller11,HerschendIyama11}.
\end{question}

\begin{question}
In characteristic $p$, Theorem~\ref{thm:matrix} only recovers
$\coxpol_A$ modulo $p$, while the lattice-enriched invariant of Question~\ref{q:lattice}
recovers it integrally. Do there exist derived-inequivalent algebras whose
enriched calculi become isomorphic after reduction modulo $p$? Note that
Example~\ref{ex:notcomplete} already exhibits the rational version of
this phenomenon in characteristic zero.
\end{question}

\subsection*{Disclosure}
The mathematical content of this paper, including the formulation of Theorems and drafts of their proofs, was developed with substantial assistance from an AI system (Claude, Anthropic, Fable 5 model). The author takes full responsibility for all statements.

\end{document}